\newtheorem{theorem}{Theorem}[section]
\newtheorem{definition}[theorem]{Definition}
\newtheorem{corollary}[theorem]{Corollary}
\newtheorem{lemma}[theorem]{Lemma}
\numberwithin{equation}{section}
\theoremstyle{remark}
\newtheorem{remark}[theorem]{Remark}
\newtheorem{example}[theorem]{\bf Example}
\newcommand{\R}{\mathbb{R}}
\newcommand{\C}{\mathbb{C}}
\newcommand{\blue}{\textcolor{blue}}
\def\dd{\mathrm{d}}
\begin{document}

\title[Weierstrass-Kenmotsu representation  of Willmore surfaces]{\bf{
Weierstrass-Kenmotsu representation of Willmore surfaces in spheres}}
\author{Josef F. Dorfmeister, Peng Wang }

\date{}
\maketitle

\begin{abstract}
A Willmore surface $y:M\rightarrow S^{n+2}$ has a natural harmonic oriented conformal Gauss map $Gr_y:M\rightarrow SO^{+}(1,n+3)/SO(1,3)\times SO(n)$, which maps each point $p\in M$ to its oriented mean curvature 2-sphere at $p$.
An easy observation shows that all conformal Gauss maps  of Willmore surfaces satisfy a restricted nilpotency condition which will be called ``strongly conformally harmonic."
The goal of this paper is to characterize those strongly conformally harmonic maps
from a Riemann surface $M$ to $SO^+ (1, n + 3)/{SO^+(1, 3) \times SO(n) }$
which are the conformal Gauss maps of some Willmore surface in $S^{n+2}.$
It turns out that generically the condition of being strongly conformally harmonic suffices to
be associated to a Willmore surface. The exceptional case will also be discussed.
\end{abstract}
\vspace{1mm}  {\bf \ \ ~~Keywords:}  Willmore surfaces;  conformal Gauss map; S-Willmore surfaces.     \vspace{2mm}

{\bf\   ~~ MSC(2010): \hspace{2mm} 53A30, 53C30, 53C35}


\section{Introduction}
One of the most classical topics of differential geometry is the investigation of specific classes of surfaces. Examples include Lagrange, Weierstrass and Riemann's works on on minimal surfaces, Hilbert's work on constant negative Gauss curvature surfaces and Hopf's work on global geometry of CMC surfaces. The study of such surfaces not only
produces many important results, but also leads to the development of new methods which have great influence both on geometry and on other fields of mathematics like analysis and PDE.

Since Gauss introduced the ``Gauss map" for surfaces,  the interaction between surfaces and their Gauss maps plays an important role in the study of surfaces, see e.g., \cite{Ruh-V}. Kenmotsu's classical work on surfaces with prescribed non-vanishing (generally  also non-constant) mean curvature and  their Gauss map \cite{Kenmotsu} lead to a new direction in geometry. A different, but related topic was investigated in a  series of papers by Hoffman and Osserman \cite{HO1} on surfaces in $\R^n$, and similarly by Weiner \cite{Weiner,Weiner2}.

In the study of the conformal geometry of surfaces, the conformal Gauss map plays an important role \cite{Blaschke,Bryant1984,Ejiri1988, Rigoli1987}. So it is natural to what kind of maps can be the conformal Gauss map of a surface in $S^{n+2}$. This is in particular an important problem concerning Willmore surfaces.
A Willmore surface in $S^{n+2}$ is a critical surface of the Willmore functional
\[\int_M(|\vec{H}|^2-K+1)\dd M,\]
with $\vec{H}$  and $K$ being the mean curvature vector and the Gauss curvature respectively.
The Willmore functional can be considered to be the bending energy of a closed
surface, while the bending energy of a rod  has elastic curves as solutions to the variational problem. The latter situation was investigated  by Germain already in 1821.

 It is well-known that the Willmore functional is invariant under conformal transformations of $S^{n+2},$   \cite{Blaschke}, which makes the study of this problem more difficult. The study of  the Willmore functional and of Willmore surfaces has  lead to  progress in several directions of geometry and analysis.   For example, Li-Yau \cite{LY} introduced the concept of ``conformal volume"  and gave a partial proof of the Willmore conjecture.  Later, L. Simon \cite{Simon} and Marques-Neves \cite{Marques} applied geometric measure theory   to study the Willmore functional, which lead to Marques-Neves' proof of  the Willmore conjecture in $S^3$ (\cite{Simon,Marques}). Analytical methods developed e.g. by Kuwert-Sch\"{a}tzle \cite{Ku-Sch}, Rivi\`{e}re \cite{Rivière}, for the discussion of the Willmore functional and Willmore surfaces also make important contributions to the study of Willmore surfaces as well as the field of geometric analysis and PDE.
  These examples show how the study of Willmore surfaces influences the development of geometry and analysis, which also explains our interest in this topic.

  Due to the work of Blaschke \cite{Blaschke}, Bryant \cite{Bryant1984}, Ejiri \cite{Ejiri1988} and Rigoli \cite{Rigoli1987}, a  Ruh-Villms type classical result was established, namely that a conformal immersion is Willmore if and only if its conformal Gauss map is harmonic.
Concerning Willmore surfaces, it is a natural and interesting geometric question of what kind of harmonic maps can be realized as the conformal Gauss map of some Willmore surface. This is in fact an essential problem when one wants to use the loop group method of integrable systems theory \cite{DPW}, since this method studies  Willmore surfaces in terms of conformal Gauss maps.
In spite of much  work on Willmore surfaces, independent of the work on the Willmore conjecture,  and from many different points of view (see, e.g., \cite{Be-Ri,Bryant1984,Ejiri1988,Helein,Ku-Sch,Marques,Rivière}), the question just stated above has  not been solved.

In this note we give an answer to this question.

A surface $y$ in $S^{n+2}$  has a mean curvature 2-sphere at each point. The conformal Gauss map $Gr_y$ maps each point to the oriented mean curvature 2-sphere at this point. Since in conformal geometry an oriented mean curvature 2-sphere of $S^{n+2}$  is identified with an oriented 4-dimensional Lorentzian subspace of the oriented $n+4$ dimensional Lorentz-Minkowski space $\R^{n+4}_1$ (See for example, \cite{BPP,Her1}),
$Gr_y$ can be also viewed as a map into $Gr_{3,1}(\R^{n+4}_{1})=SO^+(1,n+3)/SO^+(1,3)\times SO(n)$.
Here by $SO^+(1,n+3)$ we denote the connected component of the special linear isometry group of $\mathbb{R}^{n+4}_1$
which contains the identity element. Here $``\ ^+"$ comes from  the fact that  $SO^+(1,n+3)$ preserves
the forward light cone.

To obtain the characterization of all harmonic maps which are the conformal Gauss map of some Willmore surface, we have two things to do:

 \begin{enumerate}
\item  {\it From a Willmore surface to its  harmonic conformal Gauss map (The simple direction).}
We first derive a description of the Maurer-Cartan form $\alpha$ of a natural
frame  associated with the conformal Gauss map of a conformal surface
in $S^{n+2}$. This yields a very specific structure of the matrices occurring (see Theorem \ref{frame}). We also show that, conversely, given a  conformal map into $SO^+(1,n+3)/SO^+(1,3)\times SO(n)$ which has a frame whose Maurer-Cartan form has the mentioned specific form, then it is the conformal Gauss map of a conformal immersion. This result seems to be new to the best of the knowledge of these authors.

Moreover, from Theorem \ref{frame}, one obtains  $B_1^tI_{1,3}B_1=0$, or equivalently, the linear map $\mathfrak{B}$ satisfies $\mathfrak{B}^2|_{V^{\perp}} \equiv 0,$ where $\mathfrak{B}(X) =  F \alpha_{\mathfrak{p}}'(\frac{\partial}{\partial z})F^{-1}X$ (See  \eqref{eq-B} for more details).
From this it is already clear that only special conformally  harmonic maps can be the
conformal Gauss maps of Willmore surfaces. These special conformally harmonic maps will be called ``strongly conformally harmonic maps". The equation   $B_1^tI_{1,3}B_1=0$ and the equation  $\mathfrak{B}^2|_{V^{\perp}} \equiv 0$ both  use in some sense coordinates. These equations can be derived from some coordinate free expression $\mathcal{B}(X,\tilde{X})=F \alpha_{\mathfrak{p}}(\tilde{X})F^{-1}X$, where $\tilde{X}\in\Gamma(TM)$.
 Note that $\mathcal{B}$ can be viewed as the generalized Weingarten map \cite{LW}. For our purposes it will be most convenient to use coordinates and frames to carry out concrete computations. Note that  frames are the
 crucial tool of the DPW method which will be used to construct explicit examples and to characterize special surfaces. \footnote{
``We  share a philosophy about linear algebra: we think basis-free, we write basis-free, but when the chips are down we close the office door and compute with matrices like fury."
Irving Kaplansky, Page 88 of the book ``Paul Halmos: Celebrating 50 Years of Mathematics."
}

\item

{\it From a harmonic map to a Willmore surface, where it is possible (The difficult direction).}
In this procedure one starts from some harmonic map and choses a moving frame.
If this frame  has a  Maurer-Cartan  which has the special form  described
in Theorem \ref{frame}, then this theorem already ensures that the given map is  the conformal Gauss map of some conformal immersion.
But there are many different frames (all gauge equivalent though)   and therefore
the form of the Maurer-Cartan form depends on the choice of frame (i.e. gauge).
In particular, the special form of the Maurer-Cartan form as stated in  Theorem \ref{frame}
is not a gauge invariant criterion on the map, which makes it difficult to be checked and the geometric meaning is unclear. Further observation makes us realize that the
restricted nilpotency condition $\mathfrak{B}^2|_{V^{\perp}}\equiv0$ plays an important role. For a harmonic map $f$ , whose frame satisfies the restricted nilpotency condition we show in Theorem \ref{th-Willmore-harmonic-U} that it always induces  a map $y :U \rightarrow S^{n+2}$ such that, up to a change of orientation, either $y$ is a  Willmore surface  on an open dense subset of $M$ and $f$ its oriented conformal Gauss map, or $y$ degenerates to a point.\footnote{Note that a Willmore surface is conformally equivalent to a minimal surface in $\R^n$ if and only if it has a dual surface which degenerates to a point \cite{Bryant1984,Ejiri1988}. Hence the corresponding conformal Gauss map contains two maps with one degenerating to a point. Recall that a dual surface is another conformal map sharing the same conformal Gauss map as the surface \cite{Bryant1984,Ejiri1988,Ma2005,Ma2006}. In general a Willmore surface may have no dual surface \cite{Ejiri1988,BFLPP,Ma2006}. So it could happen that the ``conformal Gauss map" contains only one map which degenerates to a point.}
For the case that $y$ is non-degenerate on an open dense set, we show that if $f$ is the oriented conformal Gauss map of $y$, then this already implies the unique global existence (See Theorem \ref{th-Willmore-harmonic} for more details).
 \end{enumerate}

This paper is organized as follows: In Section 2 we recall the moving frame treatment of Willmore surfaces, following the method of \cite{BPP}, relating a Willmore surface to its conformal Gauss map. We also briefly compare our treatment with H\'{e}lein's framework \cite{Helein}. Then we introduce the basic facts about harmonic maps and apply them, in Section 3,  to describe the conformal Gauss maps (See Theorem \ref{th-Willmore-harmonic-U} and Theorem \ref{th-Willmore-harmonic}).
Appendix ends this paper with a technical proof of  Theorem \ref{normalizationlemma}.


\section{Conformal  surfaces in $S^{n+2}$}

 We will use the elegant treatment of the conformal geometry of surfaces in $S^{n+2}$ presented in \cite{BPP} and then describe  these surfaces  by using the Maurer-Cartan form of some lift.

\subsection{Conformal surface theory in the projective light cone model}

Let $\mathbb{R}^{n+4}_1$ denote Minkowski space, i.e. we consider $\mathbb{R}^{n+4}$ equipped with the Lorentzian metric
$$<x,y>=-x_{0}y_0+\sum_{j=1}^{n+3}x_jy_j=x^t I_{1,n+3} y,\ \hspace{3mm}  I_{1,n+3}=diag(-1,1,\cdots,1).$$
Let $\mathcal{C}_+^{n+3}= \lbrace x \in \mathbb{R}^{n+4}_{1} |<x,x>=0 , x_0 >0 \rbrace $
denote the forward light cone of $\mathbb{R}^{n+4}_{1}$.
It is easy to see that the projective light cone
$
Q^{n+2}=\{\ [x]\in\mathbb{R}P^{n+3}\ |\ x\in \mathcal{C}_+^{n+3}
\}$
with the induced conformal metric, is conformally equivalent to $S^{n+2}$.
Moreover, the conformal group of
$Q^{n+2}$ is exactly the projectivized orthogonal group $O(1,n+3)/\{\pm1\}$ of
$\mathbb{R}^{n+4}_1$, acting on $Q^{n+2}$ by
$
T([x])=[Tx],\,\,\ T\in O(1, n+3).
$

Let $y:M\rightarrow S^{n+2}$ be a conformal immersion from a Riemann surface $M$.
 Let $U\subset M$ be a contractible open subset. A local
lift of $y$ is a map $Y:U\rightarrow \mathcal{C}_+^{n+3} $ such
that $\pi\circ Y=y$. Two different local lifts differ by a scaling,
thus they induce the same conformal metric on $M$.
Here we call $y$ a {\em conformal} immersion, if
$\langle Y_{z},Y_{z} \rangle =0$ and
$\langle Y_{z},Y_{\bar{z}} \rangle >0$ for any local lift $Y$ and any complex
coordinate $z$ on $M$.
Noticing
$\langle Y,Y_{z\bar{z}}\rangle =-\langle Y_{z},Y_{\bar{z}}\rangle <0$, we see that
\begin{equation}
V={\rm Span}_{\mathbb{R}}\{Y,{\rm Re}Y_{z},{\rm Im}Y_{z},Y_{z\bar{z}}\}
\end{equation}
is an oriented rank-4 Lorentzian sub-bundle over $U$, and
there is a natural decomposition
of the oriented trivial bundle $U\times \mathbb{R}^{n+4}_{1}=V\oplus V^{\perp}$, where $V^{\perp}$ is the orthogonal complement of $V$ with an induced natural orientation.
Note that both, $V$ and $V^{\perp}$,  are independent of the choice of $Y$
and $z$, and therefore are conformally invariant. In fact, we obtain a global conformally invariant bundle decomposition
$M\times \mathbb{R}^{n+4}_{1}=V\oplus V^{\perp}$. For any $p\in M$, we denote by $V_p$ the fiber of $V$ at $p$. And
the complexifications of  $V$ and $V^{\perp}$ are denoted by $V_{\mathbb{C}}$ and
$V^{\perp}_{\mathbb{C}}$ respectively.
Since $Y$ takes values in the forward light cone $\mathcal{C}_+^{n+3}$, we focus on conformal transformations which are contained in  $SO^+(1,n+3)$.

Fixing a local coordinate $z$ on $U$, there exists a unique local lift $Y$ in $\mathcal{C}^{n+3}$ satisfying
$|{\rm d}Y|^2=|{\rm d}z|^2$, i.e., $\langle Y_{z},Y_{\bar{z}}\rangle = \frac{1}{2}$. Such lift $Y$ is called the canonical lift with respect
to $z$.  Given a canonical lift $Y$ we choose the frame $\{Y,Y_{z},Y_{\bar{z}},N\}$ of
$V_{\mathbb{C}}$, where $N$ is the uniquely determined section of $V$ over $U$ satisfying
\begin{equation}\label{eq-N}
\langle N,Y_{z}\rangle=\langle N,Y_{\bar{z}}\rangle=\langle
N,N\rangle=0,\langle N,Y\rangle=-1.
\end{equation}
Note that $N$ lies in the forward light cone $\mathcal{C}_+^{n+3}$
 and that
$N\equiv 2Y_{z\bar{z}}\!\!\mod Y$ holds.

Next we define \emph{the conformal Gauss map} of $y$.

\begin{definition} \label{def-gauss} $($\cite{Bryant1984,BPP,Ejiri1988,Ma}$)$
Let $y:M\to S^{n+2}$ be a conformal immersion from a Riemann surface $M$. The  \emph{conformal Gauss map} of $y$ is defined by
\begin{equation}\begin{array}{ccccc}
                 Gr : & M &\rightarrow&
Gr_{1,3}(\mathbb{R}^{n+4}_{1}) &= SO^+(1,n+3)/SO^+(1,3)\times SO(n)\\
                \ & p\in M & \mapsto & V_p &\ \\
                \end{array}
\end{equation}
Here $V_p$ is  the $4-$dimensional Lorentzian subspace oriented by a basis $\{Y,N,Y_u,Y_v\}$.
\end{definition}

The orientation of $V$ implies that $Gr$ maps to  the symmetric space $SO^+(1,n+3)/SO^+(1,3)\times SO(n)$ instead of $SO^+(1,n+3)/S(O^+(1,3)\times O(n))$, which is usually used in the literature. Note also that $Gr$ depends on the conformal immersion $y$ as well as the chosen complex structure of the Riemann surface $M$.
 Note that $Gr$ depends on the conformal immersion $y$ as well as the chosen complex structure of the Riemann surface $M$. We will therefore sometimes write $Gr = Gr_y$ to emphasize this.

Given a (local) canonical lift $Y$ we note that $Y_{zz}$ is orthogonal to
$Y$, $Y_{z}$ and $Y_{\bar{z}}$. Therefore there exists a complex valued function $s$
and a section $\kappa\in \Gamma(V_{\mathbb{C}}^{\perp})$ such that
\begin{equation}
Y_{zz}=-\frac{s}{2}Y+\kappa.
\end{equation}
This defines two basic invariants of $y$:
$\kappa$, \emph{the conformal Hopf differential} of $y$ ,
and $s$, \emph{the Schwarzian} of $y$.
Clearly, $\kappa$ and $s$ depend on the
coordinate $z$ ( see \cite{BPP,Ma}).
Let $D$
denote the $V_{\mathbb{C}}^{\perp}$ part of the natural connection of
$\mathbb{C}^{n+4}$. Then for any section $\psi\in
\Gamma(V_{\mathbb{C}}^{\perp})$ of the normal bundle $V_{\mathbb{C}}^{\perp}$
and any (local) canonical lift $Y$ of some conformal immersion $y$ into $S^{n+2}$
we obtain
the structure equations (\cite{BPP}, \cite{Ma2006}):
\begin{equation}\label{eq-moving}
\left\{\begin {array}{lllll}
Y_{zz}=-\frac{s}{2}Y+\kappa,\\
Y_{z\bar{z}}=-\langle \kappa,\bar\kappa\rangle Y+\frac{1}{2}N,\\
N_{z}=-2\langle \kappa,\bar\kappa\rangle Y_{z}-sY_{\bar{z}}+2D_{\bar{z}}\kappa,\\
\psi_{z}=D_{z}\psi+2\langle \psi,D_{\bar{z}}\kappa\rangle Y-2\langle
\psi,\kappa\rangle Y_{\bar{z}}.
\end {array}\right.
\end{equation}
For these structure equations the integrability conditions are
the conformal Gauss, Codazzi and Ricci equations respectively (\cite{BPP}, \cite{Ma2006}):
\begin{equation}\label{eq-integ}
\left\{\begin {array}{lllll} \frac{1}{2}s_{\bar{z}}=3\langle
\kappa,D_z\bar\kappa\rangle +\langle D_z\kappa,\bar\kappa\rangle,\\
{\rm Im}(D_{\bar{z}}D_{\bar{z}}\kappa+\frac{\bar{s}}{2}\kappa)=0,\\
R^{D}_{\bar{z}z}=D_{\bar{z}}D_{z}\psi-D_{z}D_{\bar{z}}\psi =
2\langle \psi,\kappa\rangle\bar{\kappa}- 2\langle
\psi,\bar{\kappa}\rangle\kappa.
\end {array}\right.
\end{equation}

Choosing an oriented orthonormal frame $\{\psi_j,j=1,\cdots,n\}$ of the normal bundle $V^{\perp}$ over $U$,
we can write the normal connection as $D_z\psi_j=\sum_{l=1}^{n}b_{jl}\psi_l$ with $b_{jl}+b_{lj}=0.$
Then, the conformal Hopf differential $\kappa$ and its derivative $D_{\bar{z}}\kappa$ are of the form
\begin{equation} \label{defkappabeta}
\kappa=\sum_{j=1}^{n}k_j\psi_j,\ D_{\bar{z}}\kappa=\sum_{j=1}^{n}\beta_j\psi_j,\
\hbox{with }\beta_j=k_{j\bar{z}}- \sum_{j=1}^{n}\bar{b}_{jl}k_l,\ j=1,\cdots,n.\end{equation}
Finally, put
$\phi_1=\frac{1}{\sqrt{2}}(Y+N),\ \phi_2=\frac{1}{\sqrt{2}}(-Y+N),\ \phi_3= Y_z+Y_{\bar{z}},\ \phi_4=i(Y_z-Y_{\bar{z}}),\   k=\sqrt{\sum_{j=1}^{n}|k_j|^2},$
and set
\begin{equation}\label{F}
F:=\left(\phi_1,\phi_2,\phi_3,\phi_4,\psi_1,\cdots,\psi_n\right).
\end{equation}
Note that $F$ is a frame of the conformal Gauss map $Gr$. From the above computation, we obtain directly (Here $I_{1,3}=\hbox{diag}(-1,1,1,1)$)
\begin{theorem}\label{frame}\
\begin{enumerate}
\item Let $y:M\rightarrow  S^{n+2}$ be a conformal immersion
and $Y$ its canonical lift over the open contractible set $U \subset M$.
Then the frame $F$ attains values in $SO^+(1,n+3)$,
and  the Maurer-Cartan form $\alpha=F^{-1}\dd F$ of $F$ is of the form \[\alpha=\left(
                   \begin{array}{cc}
                     A_1 & B_1 \\
                     B_2 & A_2 \\
                   \end{array}
                 \right)\dd z+\left(
                   \begin{array}{cc}
                     \bar{A}_1 & \bar{B}_1 \\
                     \bar{B}_2 & \bar{A}_2 \\
                   \end{array}
                 \right)\dd \bar{z},\]
with
\begin{equation}A_1=\left(
                             \begin{array}{cccc}
                               0 & 0 & s_1 & s_2\\
                               0 & 0 & s_3 & s_4 \\
                               s_1 & -s_3 & 0 & 0 \\
                               s_2 & -s_4 & 0 & 0 \\
                             \end{array}
                           \right),\   A_2=\left(
                             \begin{array}{cccc}
                               b_{11} & \cdots &  b_{n1} \\
                               \vdots& \vdots & \vdots \\
                               b_{1n} &\cdots & b_{nn} \\
                             \end{array}
                           \right),\ \end{equation}
\begin{equation}\label{s}
\left\{\begin{split}&s_1=\frac{1}{2\sqrt{2}}(1-s-2k^2),\ s_2=-\frac{i}{2\sqrt{2}}(1+s-2k^2),\\
&s_3=\frac{1}{2\sqrt{2}}(1+s+2k^2),\ s_4=-\frac{i}{2\sqrt{2}}(1-s+2k^2),\\
\end{split}\right.
\end{equation}
\begin{equation} \label{B1}
B_1=\left(
      \begin{array}{ccc}
         \sqrt{2} \beta_1 & \cdots & \sqrt{2}\beta_n \\
         -\sqrt{2} \beta_1 & \cdots & -\sqrt{2}\beta_n \\
        -k_1 & \cdots & -k_n \\
        -ik_1 & \cdots & -ik_n \\
      \end{array}
    \right),  \ \
B_2=\left(
      \begin{array}{cccc}
        \sqrt{2} \beta_1& \sqrt{2} \beta_1 & k_1& i k_1 \\
        \vdots & \vdots & \vdots & \vdots\\
      \sqrt{2} \beta_n& \sqrt{2} \beta_n & k_n & ik_n \\
      \end{array}
    \right)=-B_1^tI_{1,3}.
 \end{equation}
                       \begin{equation*} \end{equation*}

\item Conversely, assume we have a conformal map $f:U\rightarrow SO^+(1,n+3)/SO^+(1,3)\times SO(n)$. If it has some frame $F=(\phi_1,\cdots,\phi_4,\psi_1,\cdots,\psi_{n+4}):U\rightarrow SO^+(1,n+3)$
such that the Maurer-Cartan form $\alpha=F^{-1}\dd F$  of $F$ is of the above form,
then
\begin{equation}\label{eq-y-from-map}y=\pi_0(F)=:\left[ (\phi_1-\phi_2)\right]
\end{equation}
is a conformal immersion from $U$ into $S^{n+2}$ with $f$ being its conformal Gauss map.\end{enumerate}
    \end{theorem}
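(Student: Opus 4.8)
The plan is to run the computation behind part (1) in reverse: starting from the frame $F$ and the prescribed shape of $\alpha$, I would reconstruct a canonical lift $Y$, check that $y=[\phi_1-\phi_2]$ is a conformal immersion, and finally recognize the oriented mean curvature $2$-sphere $V_p$ inside the columns of $F$.

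\emph{Setting up the lift.} I would put $Y:=\tfrac1{\sqrt2}(\phi_1-\phi_2)$ and $N:=\tfrac1{\sqrt2}(\phi_1+\phi_2)$, read off $k_j,\beta_j$ from the prescribed $B_1$, and set $\kappa:=\sum_j k_j\psi_j$. Since $F\in SO^+(1,n+3)$, its columns satisfy $\langle\phi_1,\phi_1\rangle=-1$, $\langle\phi_j,\phi_j\rangle=1$ for $j\ge 2$ and $\langle\phi_i,\phi_j\rangle=0$ for $i\ne j$; hence $\langle Y,Y\rangle=\langle N,N\rangle=0$, $\langle Y,N\rangle=-1$ and $\langle Y,\phi_1\rangle=-\tfrac1{\sqrt2}<0$. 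Because $F$ lies in the component preserving the forward cone, $\phi_1$ is future pointing timelike, and then the last inequality forces the nonzero null vector $Y$ to be future pointing as well; so $Y:U\to\mathcal{C}_+^{n+3}$ and $y=[\phi_1-\phi_2]=[Y]$ is a genuine map into $Q^{n+2}\cong S^{n+2}$.

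\emph{Conformality and the canonical $N$.} Next I would differentiate the columns of $F$ using $\dd F=F\alpha$. Reading off the first two columns of the $\dd z$-part and invoking the identities $s_1+s_3=\tfrac1{\sqrt2}$, $s_2+s_4=-\tfrac i{\sqrt2}$ that follow from \eqref{s}, one gets $Y_z=\partial_z Y=\tfrac12(\phi_3-i\phi_4)$, hence (as $F$, and so $Y$, is real) $Y_{\bar z}=\tfrac12(\phi_3+i\phi_4)$, $Y_u=\phi_3$, $Y_v=\phi_4$. Orthonormality of $\phi_3,\phi_4$ then gives $\langle Y_z,Y_z\rangle=0$ and $\langle Y_z,Y_{\bar z}\rangle=\tfrac12>0$, so $y$ is a conformal immersion and $Y$ is exactly its canonical lift with respect to $z$. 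Differentiating $Y_z$ once more, now via the $\dd\bar z$-part of $\alpha$ and using \eqref{s} again (in the combinations $\bar s_1-i\bar s_2=\tfrac{1-2k^2}{\sqrt2}$, $\bar s_3-i\bar s_4=\tfrac{1+2k^2}{\sqrt2}$), the $\kappa$-terms cancel and one obtains $Y_{z\bar z}=-k^2Y+\tfrac12 N$. Since $\langle\kappa,\bar\kappa\rangle=\sum_j|k_j|^2=k^2$, this is precisely the second structure equation in \eqref{eq-moving}, and a direct check gives $\langle N,Y_z\rangle=\langle N,Y_{\bar z}\rangle=\langle N,N\rangle=0$, $\langle N,Y\rangle=-1$; hence the $N$ introduced above is the section uniquely determined by \eqref{eq-N}.

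\emph{Identifying the conformal Gauss map, and the delicate points.} With this in hand the conclusion is immediate: $V_p=\mathrm{Span}_{\mathbb R}\{Y,\mathrm{Re}\,Y_z,\mathrm{Im}\,Y_z,Y_{z\bar z}\}=\mathrm{Span}_{\mathbb R}\{\phi_1,\phi_2,\phi_3,\phi_4\}$, and the orienting basis $\{Y,N,Y_u,Y_v\}$ differs from $(\phi_1,\phi_2,\phi_3,\phi_4)$ by a base change of determinant $1$; so the oriented Lorentzian $4$-plane $V_p$ equals the point of $SO^+(1,n+3)/SO^+(1,3)\times SO(n)$ represented by $F(p)$, i.e. $Gr_y=f$. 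I expect that the only steps needing genuine care are the orientation/time-orientation bookkeeping guaranteeing that $y$ lands in $S^{n+2}$ (rather than in a "dual" configuration or the backward cone) — ruled out by the future-pointing argument above — and the observation that the vector $\tfrac1{\sqrt2}(\phi_1+\phi_2)$ read off from the frame is forced to be the canonical $N$ of \eqref{eq-N}, which is what makes $V_p$ identifiable with $f(p)$. Everything else is the computation of part (1) run backwards; I would also remark in passing that harmonicity of $f$ is not used here, and that the remaining rows of $\alpha$ automatically reproduce the structure equations for $N_z$ and $\psi_z$ in \eqref{eq-moving}.
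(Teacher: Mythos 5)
Your proposal is correct and takes essentially the same route as the paper, which obtains the theorem "directly" from the structure equations \eqref{eq-moving} and the definition of $F$ in \eqref{F}; you simply run that computation in reverse for part (2). Your key identities ($\langle Y,\phi_1\rangle=-\tfrac{1}{\sqrt2}<0$ forcing $Y$ into the forward cone, $Y_z=\tfrac12(\phi_3-i\phi_4)$, $Y_{z\bar z}=-k^2Y+\tfrac12 N$, and the determinant-one base change from $(\phi_1,\phi_2,\phi_3,\phi_4)$ to $(Y,N,Y_u,Y_v)$) all check out and supply exactly the verification the paper leaves implicit.
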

\begin{remark}
 \begin{enumerate}
\item There are two conditions in the above theorem for $f$ to be the conformal Gauss map of some surface $y$, i.e., $B_1$ being a special form and $A_1$ being a special form at the same time. For Willmore surfaces, or equivalently, for $f$ being harmonic, we will show that the restriction on $B_1$ is enough, except the degenerate case.
\item
As we have seen in the Theorem just above, the form of the Maurer-Cartan characterizes conformal maps and therefore this form is  of great importance.
 In particular,
 \[B_1 ^t I_{1,3} B_1 = 0\]
 holds. From this one see that for any point in $M$ the rank of $B_1$ is at most 2. The case $B_1\equiv 0$ is equivalent to $y$ being conformally equivalent to a round sphere. For the non-trivial case, $B_1 \neq 0$, a detailed discussion will be given in Theorem \ref{th-Willmore-harmonic-U}.
 \end{enumerate}

\end{remark}

\begin{definition} For a conformal map $f:M\rightarrow SO^+(1,n+3)/SO^+(1,3)\times SO(n)$,
We define the linear map $\mathfrak{B}$ as follows:
\begin{equation}\label{eq-B}
 \begin{array}{cccc}
  \mathfrak{B}:&\R^{n+4}_1& \rightarrow &\R^{n+4}_1\\
 \ & X &\mapsto& F \alpha_{\mathfrak{p}}'(\frac{\partial}{\partial z})F^{-1}X
 \end{array}
\end{equation}
It is straightforward to see that $\mathfrak{B}$ is well-defined, i.e., independent of the choice of the  frame  $F$.
\end{definition}
\begin{remark}\
\begin{enumerate}
\item Note that $\mathfrak{B}(V_{\C})=\partial f$, if we use the notion $\partial f$ as Chern and Wolfson defined in \cite{Chern-W}.
\item In \cite{Hitchin}, Hitchin  introduced the idea of decomposing $\dd$ into the connection part and a nilpotent linear map. Here $\mathfrak{B}$ is the linear map. Note that the same idea is also used in the definition of $\beta$ in \cite{BR} if one uses $\beta$ acting on a section as in Proposition 1.1 of \cite{BR}.
\end{enumerate}
\end{remark}
Using the fact that
$\alpha_{\mathfrak{p}}'(\frac{\partial}{\partial z})=\left(
                   \begin{array}{cc}
                     0 & B_1 \\
                     -B_1^tI_{1,3} & 0 \\
                   \end{array}
                 \right),$
                 we obtain immediately
\begin{corollary}\
 \begin{enumerate}
\item $\mathfrak{B}(V_{\C})\subset V_{\C}^{\perp}$, $\mathfrak{B}(V_{\C}^{\perp})\subset V_{\C}$.
\item The linear map $\mathfrak{B}$ satisfies the ``restricted nilpotency condition"
\begin{equation}\label{eq-nilpotent-B}
\mathfrak{B}^2|_{V^\perp}=0.
\end{equation}
if and only if the Maurer-Carten form of $f$ satisfies
\[B_1^t I_{1,3}B_1=0.\]
\end{enumerate}
\end{corollary}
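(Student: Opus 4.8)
The whole statement reduces to a short computation with the off-diagonal block matrix $M:=\alpha_{\mathfrak{p}}'(\frac{\partial}{\partial z})=\left(\begin{array}{cc} 0 & B_1 \\ -B_1^tI_{1,3} & 0\end{array}\right)$, together with the relation $\mathfrak{B}=FMF^{-1}$ coming from \eqref{eq-B}. First I would record that, $\mathfrak{B}$ being independent of the choice of frame, one may take $F=(\phi_1,\dots,\phi_4,\psi_1,\dots,\psi_n)$ adapted to $f$, so that $F$ carries $\mathrm{Span}_{\C}\{e_1,\dots,e_4\}$ onto $V_{\C}$ and $\mathrm{Span}_{\C}\{e_5,\dots,e_{n+4}\}$ onto $V_{\C}^{\perp}$; here one uses that an admissible gauge $F\mapsto FK$ with $K$ valued in $SO^+(1,3)\times SO(n)$ conjugates $\alpha_{\mathfrak{p}}$, hence $M$, so that the outer conjugation by $F$ leaves $\mathfrak{B}$ unchanged.

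For part (1): since $M$ has vanishing diagonal blocks, with blocks $B_1$ of size $4\times n$ and $-B_1^tI_{1,3}$ of size $n\times 4$, left multiplication by $M$ sends $\mathrm{Span}_{\C}\{e_1,\dots,e_4\}$ into $\mathrm{Span}_{\C}\{e_5,\dots,e_{n+4}\}$ and conversely. Conjugating by $F$ yields at once $\mathfrak{B}(V_{\C})=FM F^{-1}(V_{\C})\subset V_{\C}^{\perp}$ and $\mathfrak{B}(V_{\C}^{\perp})\subset V_{\C}$.

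For part (2): squaring the block matrix gives $M^2=\left(\begin{array}{cc} -B_1B_1^tI_{1,3} & 0 \\ 0 & -B_1^tI_{1,3}B_1\end{array}\right)$, so $\mathfrak{B}^2=FM^2F^{-1}$ is block diagonal for $\C^{n+4}=V_{\C}\oplus V_{\C}^{\perp}$, and in the frame $\psi_1,\dots,\psi_n$ its restriction to $V_{\C}^{\perp}$ is $F(-B_1^tI_{1,3}B_1)F^{-1}$. Since $F$ is invertible and $V_{\C}^{\perp}=V^{\perp}\otimes\C$, and a $\C$-linear map vanishes on the real bundle $V^{\perp}$ exactly when its $\C$-linear extension vanishes on $V_{\C}^{\perp}$, we conclude that $\mathfrak{B}^2|_{V^{\perp}}\equiv0$ holds if and only if $B_1^tI_{1,3}B_1=0$.

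I do not expect a genuine obstacle here: once the dictionary between $V_{\C},V_{\C}^{\perp}$ and the coordinate blocks of $F$ is in place, both parts are immediate matrix manipulations. The only points deserving a word of care are the gauge invariance of $\mathfrak{B}$ (so that one computation in a single frame suffices) and the elementary real-versus-complex passage used for the restriction in part (2). As a remark one may add that for the distinguished frame of Theorem \ref{frame} the block $B_1$ in \eqref{B1} satisfies $B_1^tI_{1,3}B_1=0$ identically---a one-line check from \eqref{B1}---so the restricted nilpotency condition automatically holds for the conformal Gauss map of any conformal immersion, which is precisely the observation behind the notion of ``strongly conformally harmonic map''.
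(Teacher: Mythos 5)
Your proposal is correct and follows exactly the route the paper intends: the corollary is stated as an immediate consequence of the block form of $\alpha_{\mathfrak{p}}'(\frac{\partial}{\partial z})$, and the block computation $\mathfrak{B}^2=-F\,\mathrm{diag}(B_1B_1^tI_{1,3},\,B_1^tI_{1,3}B_1)\,F^{-1}$ together with the identification of $V_{\C}^{\perp}$ with the last $n$ coordinate directions under $F$ is precisely the paper's (implicit) argument. Your added care about gauge invariance and the real-versus-complex restriction only makes explicit what the paper leaves unsaid.
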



\subsection{Willmore surfaces and harmonicity}

The conformal
Hopf differential $\kappa$ plays an important role in the investigation of Willmore
surfaces.
A direct computation using \eqref{eq-moving} shows that the conformal Gauss map $Gr$ induces a conformally invariant (possibly degenerate )
metric
\[
g:=\frac{1}{4}\langle {\rm d}G,{\rm d}G\rangle=\langle
\kappa,\bar{\kappa}\rangle|\dd z|^{2}
\]
globally on $M$ (see \cite{BPP}). Note that this metric degenerates at umbilical points of $y$, which are by definition the points where $\kappa$ vanishes.

\begin{definition}(\cite{BPP}, \cite{Ma2006}) The \emph{ Willmore functional} of $y$ is
defined as four  times the area of M with respect to the metric above:
\begin{equation}\label{eq-W-energy}
W(y):=2i\int_{M}\langle \kappa,\bar{\kappa}\rangle \dd z\wedge
\dd \bar{z}.
\end{equation}
An immersed surface $y:M\rightarrow S^{n+2}$ is called a
\emph{Willmore surface}, if it is a critical point of the Willmore
functional with respect to any variation (with compact support) of $y$.
\end{definition}

Note that the above definition of the Willmore functional coincides
with the usual definition (See \cite{BPP,Ma-W1}). It is well-known that Willmore surfaces are characterized as
follows
\cite{Bryant1984,BPP,Ejiri1988,Rigoli1987,Wang1998}.

\begin{theorem}\label{thm-willmore} For a conformal immersion $y:M\rightarrow  S^{n+2}$, the following three conditions
are equivalent:
 \begin{enumerate}
\item $y$ is Willmore;

\item The conformal Gauss map $Gr_y$ is a conformally harmonic map into
$G_{3,1}(\mathbb{R}^{n+3}_{1})$;

\item The conformal Hopf differential $\kappa$ of $y$ satisfies the
``Willmore condition":
\begin{equation}\label{eq-willmore}
D_{\bar{z}}D_{\bar{z}}\kappa+\frac{\bar{s}}{2}\kappa=0
\end{equation}
for any contractible chart of $M$.
 \end{enumerate}
\end{theorem}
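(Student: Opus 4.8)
The three conditions split into two independent equivalences, $(1)\Leftrightarrow(3)$ and $(2)\Leftrightarrow(3)$, from which $(1)\Leftrightarrow(2)$ follows. I would derive $(1)\Leftrightarrow(3)$ from the first variation of $W$ and $(2)\Leftrightarrow(3)$ from the explicit Maurer--Cartan form of Theorem \ref{frame}; the second equivalence is where the real work is.

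\textbf{$(2)\Leftrightarrow(3)$.} Fix a contractible chart $U$, the canonical lift $Y$, and the frame $F$ of \eqref{F}, and write $\alpha=F^{-1}\dd F=\alpha_{\mathfrak k}+\alpha_{\mathfrak p}$ for the decomposition along $\mathfrak g=\mathfrak k\oplus\mathfrak p$ with $\mathfrak k=\mathfrak{so}(1,3)\oplus\mathfrak{so}(n)$. Since $M$ is a Riemann surface, $Gr_y$ is conformally harmonic if and only if $\dd\ast\alpha_{\mathfrak p}+[\alpha\wedge\ast\alpha_{\mathfrak p}]=0$. Combining this with the $\mathfrak p$-part of the Maurer--Cartan equation $\dd\alpha_{\mathfrak p}+[\alpha_{\mathfrak k}\wedge\alpha_{\mathfrak p}]=0$ and using $\ast\dd z=-i\,\dd z$, $\ast\dd\bar z=i\,\dd\bar z$, harmonicity becomes equivalent to the statement that $\alpha_{\mathfrak p}'$ is holomorphic for the $\mathfrak k$-connection; in block form, with $\alpha_{\mathfrak p}'=\bigl(\begin{smallmatrix}0&B_1\\ B_2&0\end{smallmatrix}\bigr)\dd z$ and $\alpha_{\mathfrak k}=\bigl(\begin{smallmatrix}A_1&0\\ 0&A_2\end{smallmatrix}\bigr)\dd z+\mathrm{c.c.}$, this is the single matrix equation
\[
\partial_{\bar z}B_1+\bar A_1 B_1-B_1\bar A_2=0,
\]
the $(2,1)$-block adding nothing because $B_2=-B_1^tI_{1,3}$, $A_1\in\mathfrak{so}(1,3)$ and $A_2\in\mathfrak{so}(n)$. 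I would then substitute the explicit entries \eqref{s}, \eqref{B1}, \eqref{defkappabeta} and read the identity column by column: the $j$-th column of $B_1$ is $(\sqrt2\beta_j,-\sqrt2\beta_j,-k_j,-ik_j)^t$, and one checks that the two rows coming from $\phi_3,\phi_4$ collapse to $\beta_j=k_{j\bar z}-\sum_l\bar b_{jl}k_l$, i.e. to the very definition \eqref{defkappabeta} of $\beta_j$ (hence are vacuous), whereas the two rows from $\phi_1,\phi_2$ collapse---after the elementary simplifications $\bar s_1+i\bar s_2=-\bar s/\sqrt2$, $\bar s_3+i\bar s_4=\bar s/\sqrt2$, $\bar s_1+\bar s_3=1/\sqrt2$, $\bar s_2+\bar s_4=i/\sqrt2$---to $\beta_{j\bar z}-\sum_l\bar b_{jl}\beta_l+\tfrac{\bar s}{2}k_j=0$ for each $j$, which is exactly $D_{\bar z}D_{\bar z}\kappa+\tfrac{\bar s}{2}\kappa=0$. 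As this is chart-independent, $(2)\Leftrightarrow(3)$.

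\textbf{$(1)\Leftrightarrow(3)$.} Let $y_t$ be a compactly supported variation with $y_0=y$. Tangential variations are reparametrizations and do not change $W$, so one may restrict to normal variations, recorded by a section $\xi\in\Gamma(V^{\perp})$. Differentiating $W(y_t)=2i\int_M\langle\kappa_t,\bar\kappa_t\rangle\,\dd z\wedge\dd\bar z$ at $t=0$, expressing $\dot\kappa$ through $\xi$ by the structure equations \eqref{eq-moving}, and integrating by parts twice yields
\[
\left.\tfrac{\dd}{\dd t}\right|_{t=0}W(y_t)=c\,\mathrm{Re}\!\int_M\!\Bigl\langle D_{\bar z}D_{\bar z}\kappa+\tfrac{\bar s}{2}\kappa,\ \xi\Bigr\rangle\,\dd z\wedge\dd\bar z
\]
for a fixed nonzero real constant $c$. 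Thus $y$ is Willmore iff the real part of $D_{\bar z}D_{\bar z}\kappa+\tfrac{\bar s}{2}\kappa$ vanishes; since the Codazzi equation (middle line of \eqref{eq-integ}) already forces its imaginary part to vanish identically, this is equivalent to \eqref{eq-willmore}. Together with the previous step this proves the theorem. (This first variation is classical; I would follow \cite{Bryant1984,Ejiri1988,Rigoli1987,BPP,Wang1998}.)

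\textbf{Where the difficulty lies.} The delicate point is $(2)\Leftrightarrow(3)$: first recasting the harmonic map equation into the clean form $\partial_{\bar z}B_1+\bar A_1 B_1-B_1\bar A_2=0$, and then the bookkeeping that shows precisely two of the four rows of this matrix identity are vacuous (they reproduce the definition of $\beta_j$) while the other two are the Willmore condition---this is exactly where the specific normalizations in \eqref{s} pay off. It is worth noting that, although the form of $\alpha$ in Theorem \ref{frame} is not gauge invariant, the harmonic map equation is, so carrying out the computation in this one frame entails no loss of generality.
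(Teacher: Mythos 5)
Your proposal is correct. Note, however, that the paper itself offers no proof of Theorem \ref{thm-willmore}: it is quoted as a well-known result with citations to \cite{Bryant1984,BPP,Ejiri1988,Rigoli1987,Wang1998}, so there is no in-paper argument to compare against line by line. What you have written is a sound reconstruction, and your central computation is in fact the one the paper implicitly relies on and partially reproduces later: the reduction of harmonicity to the single block equation $\partial_{\bar z}B_1+\bar A_1B_1-B_1\bar A_2=0$ is exactly the third line of \eqref{harmonic}, and your row-by-row evaluation is the same bookkeeping that reappears (for a general strongly conformally harmonic frame) as the system \eqref{harmonic-3} in the proof of Theorem \ref{th-Willmore-harmonic-U}. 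I checked your simplifications against \eqref{s}: indeed $\bar s_1+i\bar s_2=-\bar s/\sqrt2$, $\bar s_3+i\bar s_4=\bar s/\sqrt2$, $\bar s_1+\bar s_3=1/\sqrt2$, $\bar s_2+\bar s_4=i/\sqrt2$, so the $\phi_3,\phi_4$ rows reduce to the definition of $\beta_j$ in \eqref{defkappabeta} and the $\phi_1,\phi_2$ rows to $\beta_{j\bar z}-\sum_l\bar b_{jl}\beta_l+\frac{\bar s}{2}k_j=0$, which is \eqref{eq-willmore} componentwise. Your appeal to the Codazzi equation to dispose of the imaginary part in the first-variation step is also the standard (and correct) way to close $(1)\Leftrightarrow(3)$; that variation computation is left as a citation in your write-up just as it is in the paper, which is acceptable for a classical result but is the one place where your proof is a sketch rather than complete.
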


Now we introduce the notion of the so-called ``dual Willmore surface", which is of essential importance in Bryant's and Ejiri's description of Willmore two-spheres.
\begin{definition}\label{def-dual} (\cite{Bryant1984}, page 399 of \cite{Ejiri1988})
Let $y:M\rightarrow S^{n+2}$ be a Willmore surface with $M_0$ the set of umbilical points and with $Gr$  its conformal Gauss map. A conformal map $\hat{y}:M\setminus M_0\rightarrow S^{n+2}$ which does not coincide with $y$ is called a ``dual surface" of $y$, if either $\hat y$ reduces to a point or on an open dense subset of $M\setminus M_0$ the map  $\hat y$  is an immersion  and the conformal Gauss map $Gr_{\hat{y}}$ of $\hat{y}$ spans at all points the same subspace as $Gr_y$ (Clearly,  $\hat{y}$  then also is a Willmore surface).
\end{definition}
  There exist many Willmore surfaces (\cite{Bryant1984, Ejiri1988,Ma2005}) which admit dual Willmore surfaces.
In general a Willmore surface may not admit a dual surface.
 For the discussion of Willmore surfaces admitting a dual surface,
Ejiri \cite{Ejiri1988} introduced the so-called {\em S-Willmore surfaces}.
 It is convenient to define S-Willmore surfaces here as follows (see also \cite{Ma2005}):
\begin{definition} (\cite{Ejiri1988})
A Willmore immersion $y:M\rightarrow S^{n+2}$ is called an S-Willmore surface if on any open subset $U$, away from the umbilical points, the conformal Hopf differential $\kappa$ of $y$ satisfies
$D_{\bar{z}}\kappa || \kappa,\
~\hbox{ i.e.  }\  D_{\bar{z}}\kappa+\frac{\bar{\mu}}{2}\kappa=0~\hbox{ for some } \mu:U\rightarrow \mathbb{C}. $
\end{definition}

\begin{corollary}\label{S-frame} Let $y$ be a Willmore surface which is not totally umbilical. Then $y$ is S-Willmore  if and only if the
(maximal) rank of $B_1$ in Theorem \ref{frame} is $1$.
\end{corollary}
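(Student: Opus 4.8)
The plan is to read $\mathrm{rank}\,B_1$ off the explicit matrix in \eqref{B1} and to recognize the S-Willmore property as a rank condition on the pair of vectors $k=(k_1,\dots,k_n)$ and $\beta=(\beta_1,\dots,\beta_n)$ in $\mathbb{C}^n$, which by \eqref{defkappabeta} are the components of $\kappa$ and of $D_{\bar z}\kappa$ in the orthonormal normal frame $\{\psi_j\}$. In \eqref{B1} the second row of $B_1$ is $-1$ times the first and the fourth row is $i$ times the third; hence, over $\mathbb{C}$, the row space of $B_1$ equals $\mathrm{span}_{\mathbb{C}}\{\beta,k\}$, the coordinate version of $\mathrm{span}_{\mathbb{C}}\{\kappa,D_{\bar z}\kappa\}\subseteq V^{\perp}_{\mathbb{C}}$, so
\[\mathrm{rank}_{\mathbb{C}}B_1=\dim_{\mathbb{C}}\mathrm{span}_{\mathbb{C}}\{k,\beta\}\le 2,\]
which reproves the bound $\mathrm{rank}\,B_1\le2$ from the Remark after Theorem~\ref{frame} and makes $\mathrm{rank}\,B_1$ visibly independent of the choices of $z$ and $\{\psi_j\}$. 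In particular, at any point where $k=0$ the last two rows of $B_1$ vanish, so there $\mathrm{rank}_{\mathbb{C}}B_1\le1$.

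Next I would unpack the S-Willmore property. Let $M_0=\{\kappa=0\}$ be the umbilical set; since $y$ is not totally umbilical, $M\setminus M_0\ne\emptyset$, and on it $k\ne0$. On $M\setminus M_0$ the space $\mathrm{span}_{\mathbb{C}}\{k,\beta\}$ has dimension $1$ exactly when $\beta$ is a complex multiple of $k$, that is, when $D_{\bar z}\kappa+\tfrac{\bar\mu}{2}\kappa=0$ for some function $\mu$ on $M\setminus M_0$ (which is then uniquely determined and smooth, since $k\ne0$); this is precisely the defining condition of an S-Willmore surface. Thus on $M\setminus M_0$ one has $\mathrm{rank}_{\mathbb{C}}B_1=1$ if and only if $D_{\bar z}\kappa\parallel\kappa$.

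The two implications then assemble. If $y$ is S-Willmore, then $\mathrm{rank}_{\mathbb{C}}B_1=1$ at every point of the nonempty set $M\setminus M_0$, while $\mathrm{rank}_{\mathbb{C}}B_1\le1$ on $M_0$ by the first paragraph; hence the maximal rank of $B_1$ equals $1$. Conversely, if the maximal rank of $B_1$ is $1$, then $\mathrm{rank}_{\mathbb{C}}B_1\le1$ everywhere, while on $M\setminus M_0$ the nonzero row $-k$ forces $\mathrm{rank}_{\mathbb{C}}B_1\ge1$; so $\mathrm{rank}_{\mathbb{C}}B_1=1$ on $M\setminus M_0$, hence $D_{\bar z}\kappa\parallel\kappa$ there, and since $y$ is Willmore by hypothesis it is S-Willmore.

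I expect no analytically hard step; the care needed is purely bookkeeping. One must read ``rank'' as \emph{complex} rank and spot the $\mathbb{C}$-linear dependences among the rows of $B_1$; one must observe that a rank drop can only occur where $\kappa=0$, so that ``maximal rank $=1$'' means ``$\mathrm{rank}\,B_1\le1$ everywhere and $=1$ somewhere''; and one should note that the S-Willmore multiplier $\mu$ is a genuine function only on $M\setminus M_0$, which is exactly where the S-Willmore condition is imposed.
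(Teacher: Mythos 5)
Your proof is correct and is exactly the argument the paper intends: the corollary is stated without proof precisely because, from \eqref{B1}, the row space of $B_1$ is $\mathrm{span}_{\mathbb{C}}\{\beta,k\}$, so rank $1$ off the umbilical set is equivalent to $D_{\bar z}\kappa\parallel\kappa$. Your bookkeeping of where $k$ vanishes and of the uniqueness and smoothness of $\mu$ on $M\setminus M_0$ is the right level of care, and nothing is missing.
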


\begin{theorem}\label{thm-dual gauss map} \
 \begin{enumerate}
 \item $($\cite{Bryant1984}, Theorem 7.1 of \cite{Ejiri1988}$)$ A (non totally umbilical) Willmore surface $y$ is S-Willmore  if and only if it has a unique dual (Willmore) surface $\hat y$ on $M\setminus M_0$.
Moreover, if $y$ is S-Willmore, the dual map $\hat y$ can be extended to $M$.
\item $($Theorem 2.9 of \cite{Ma}$)$ If the dual surface $\hat y$ of $y$ is immersed at $p\in M$,  then $Gr_{\hat{y}}(p)$ spans the same subspace as $Gr_y(p)$, but its orientation  is  opposite to the one of $Gr_y(p)$.
\item  (\cite{Ejiri1988},\cite{Ma}) If two Willmore surfaces  $y$ and $\tilde y$ share the same {\em oriented} conformal Gauss map, then $y=\tilde y$.
 \end{enumerate}
\end{theorem}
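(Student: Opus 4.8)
The plan is to treat (1) as the substantive statement: once a workable formula for the dual lift is available, (2) and (3) drop out of it together with the structure equations \eqref{eq-moving} and the Willmore equation \eqref{eq-willmore}. Throughout I would follow the approach of \cite{Bryant1984,Ejiri1988,Ma}, organising everything around the canonical lift rather than around abstract sphere congruences.

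For (1), fix a complex coordinate $z$, the associated canonical lift $Y$ of $y$, the frame $\{Y,Y_z,Y_{\bar z},N\}$ of $V_{\C}$ from \eqref{eq-N}, and write the S-Willmore condition as $D_{\bar z}\kappa=-\tfrac{\bar\mu}{2}\kappa$ with $\mu:U\setminus M_0\to\C$. The candidate dual lift is
\[
\hat Y:=N+\bar\mu\,Y_z+\mu\,Y_{\bar z}+\tfrac{|\mu|^2}{2}\,Y ,
\]
which by \eqref{eq-N} is a real null section of $V$ in the forward light cone. Differentiating once by means of \eqref{eq-moving}, the $V_{\C}^{\perp}$-part of $\hat Y_z$ equals $2D_{\bar z}\kappa+\bar\mu\kappa$, which vanishes exactly because $y$ is S-Willmore; differentiating a second time, the $V_{\C}^{\perp}$-part of $\hat Y_{z\bar z}$ is a scalar multiple of $\bar\kappa$ with coefficient $\mu_z-\tfrac{\mu^2}{2}-s$, and this vanishes because the Willmore equation \eqref{eq-willmore}, rewritten for an S-Willmore surface, reads $\mu_z=\tfrac{\mu^2}{2}+s$. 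Hence $\hat Y,\hat Y_z,\hat Y_{\bar z},\hat Y_{z\bar z}$ all lie in $\Gamma(V)$; the same bookkeeping gives $\langle\hat Y_z,\hat Y_z\rangle=0$, so $z$ is again a conformal coordinate and $\hat y:=[\,\hat Y\,]$ is, on an open dense subset of $M\setminus M_0$, either an immersion whose conformal Gauss subspace is $V$ or a constant --- this is the dual surface. For the converse I would show that any dual surface $\hat y\neq y$ has a lift $\hat Y\in\Gamma(V)$ with $\hat Y_z\in\Gamma(V)$; writing $\hat Y=\lambda Y+\nu N+cY_z+\bar cY_{\bar z}$, the case $\nu\equiv0$ forces $c\equiv0$ and $\hat Y\parallel Y$ by nullity, a contradiction, so $\nu\not\equiv0$, and after normalising $\nu\equiv1$ the vanishing of the $V_{\C}^{\perp}$-part of $\hat Y_z$ yields $D_{\bar z}\kappa\parallel\kappa$ (so $y$ is S-Willmore) while nullity pins $\hat Y$ down to the ansatz above, giving uniqueness. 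The step I expect to be the real obstacle is the extension of $\hat y$ across the umbilic set $M_0$: one must show the zeros of $\kappa$ are isolated of finite order --- using that \eqref{eq-willmore} is elliptic --- so that $\mu$ stays controlled near such a zero and $[\,\hat Y\,]$ extends smoothly; this is the delicate point of \cite{Bryant1984,Ejiri1988,Ma}.

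For (2) I would carry out a finite computation: with $\hat Y$ as above and $z$ conformal for $\hat y$, solve \eqref{eq-N} with $\hat Y$ in place of $Y$ for the companion null field $\hat N$ of $\hat y$, and express the ordered frame $(\hat Y,\hat N,\hat Y_u,\hat Y_v)$ in terms of $(Y,N,Y_u,Y_v)$. Since the leading terms are $\hat Y\equiv N$ and $\hat N\equiv Y$, the resulting change-of-basis matrix has negative determinant; as this sign is locally constant it suffices to evaluate it at one convenient point or in a model case (e.g.\ $\mu\equiv0$, where $\hat Y=N$ and $\hat N=Y$ literally). Hence $Gr_{\hat y}(p)$ spans the same $4$-plane as $Gr_y(p)$ but with the opposite orientation, which is Theorem 2.9 of \cite{Ma}.

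Finally, (3) follows from (1) and (2). If $\tilde y$ is a Willmore surface with the same \emph{oriented} conformal Gauss map as $y$, then in particular its underlying conformal Gauss subspace is $V$; by the converse in (1), either $\tilde y=y$, or $y$ is S-Willmore and $\tilde y=\hat y$. In the second case (2) shows $Gr_{\tilde y}=Gr_{\hat y}$ carries the orientation opposite to that of $Gr_y$, contradicting the hypothesis, so $\tilde y=y$. The totally umbilic case $\kappa\equiv0$, excluded from (1) and (2), I would treat separately: there $Gr_y$ is the constant map onto a round $2$-sphere $S^2_0$ and $y$ must be an open piece of $S^2_0$, so it is again determined by $Gr_y$.
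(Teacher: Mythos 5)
Your proposal is correct in substance, but it does considerably more work than the paper does, and it is worth being clear about where the two diverge. The paper treats (1) and (2) purely as citations (Bryant, Theorem 7.1 of Ejiri, Theorem 2.9 of Ma's thesis) and only supplies an argument for (3); that argument is two lines long and splits exactly as yours does: if $y$ is not S-Willmore, the mean curvature sphere congruence envelops only the one surface $y$ (quoting Ejiri's Lemmas 1.3 and 3.1, resp.\ Ma's Proposition 2.12), and if $y$ is S-Willmore, the only other enveloped surface is the dual, which by (2) carries the opposite orientation. Your deduction of (3) from (1) and (2) is therefore the same logic as the paper's, but you back it up by reproving the enveloping-uniqueness statement yourself: your computation that a null section $\hat Y=\lambda Y+\nu N+cY_z+\bar cY_{\bar z}$ with vanishing $V^{\perp}_{\mathbb C}$-part of $\hat Y_z$ forces either $\hat Y\parallel Y$ or $D_{\bar z}\kappa\parallel\kappa$ is precisely the content the paper outsources to Ejiri and Ma, and your explicit ansatz $\hat Y=N+\bar\mu Y_z+\mu Y_{\bar z}+\tfrac{|\mu|^2}{2}Y$ together with the Riccati identity $\mu_z=\tfrac{\mu^2}{2}+s$ is the same mechanism the paper itself uses later, in the proof of Theorem \ref{th-Willmore-harmonic-U} (equations \eqref{hat-Y} and \eqref{Y-mu}). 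What your self-contained route buys is independence from the cited sources and a uniform set of formulas; what it costs is that you must now actually discharge the two delicate points you would otherwise inherit for free.

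Those two points are where your sketch is thinnest. First, the orientation claim in (2): arguing that the sign of the change-of-basis determinant "is locally constant so it suffices to evaluate it in a model case" is not a proof, since the model ($\mu\equiv 0$) is a different surface; you need to evaluate the determinant of the map sending $(Y,N,Y_u,Y_v)$ to $(\hat Y,\hat N,\hat Y_u,\hat Y_v)$ at a point of the given surface, using the explicit expression $\hat Y_z=(\cdots)Y+\tfrac{\mu}{2}N+(\cdots)Y_z+(\mu_z-s)Y_{\bar z}$ and its conjugate, and check the $2\times 2$ block acting on $\{Y_u,Y_v\}$ has negative determinant wherever $\hat y$ is immersed. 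Second, the extension of $\hat y$ across the umbilic set: $\mu=-2\overline{D_{\bar z}\kappa/\kappa}$ need not stay bounded near a zero of $\kappa$, and the correct statement is that $\mu$ has a well-defined finite or infinite limit, with $[\hat Y]$ extended in the infinite case by passing to $|\mu|^{-2}\hat Y$; this is exactly the renormalization the paper carries out in Case (a2) of Theorem \ref{th-Willmore-harmonic-U} using the factorization $B_1=h_0\tilde B_1$ of Corollary \ref{corollary-B1-vanish}. Neither issue is a wrong turn, but both need to be written out for your version of (1) and (2) to stand on its own.
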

\begin{proof} The result of  (3) is not presented explicitly in \cite{Ejiri1988,Ma}. So we give a proof as follows. If $y$ is non-S-Willmore, then the mean curvature two-sphere congruence of $y$ envelops only one surface $y$ by  Lemma 1.3 and Lemma 3.1 of \cite{Ejiri1988}, or Proposition 2.12 of \cite{Ma}. Since  $y$ and $\tilde y$ share the same conformal Gauss map, we have $y=\tilde y$. If $y$ is non-S-Willmore, then (2) tells $y=\tilde y$.
\end{proof}

We will say ``the conformal Gauss map contains a constant lightlike vector $Y_0 $'' if there exists a non-zero constant lightlike vector $Y_0$ in $\mathbb{R}^{n+4}_1$ satisfying $Y_0\in V_{p}$ for all $p\in M$.  Then a well-known fact states (one can find a proof on the bottom of page 1573 of \cite{Ma-W1} (similar ideas are also used on pages 378-379 of \cite{Helein}).
\begin{theorem}\label{minimal} A Willmore surface $y$ is conformally equivalent to a minimal surface in $R^{n+2}$  if and
only if its conformal Gauss map $Gr$ contains a constant lightlike vector.
\end{theorem}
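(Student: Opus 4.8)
The plan is to prove Theorem~\ref{minimal} by exploiting the correspondence, established in Theorem~\ref{frame}, between the geometric data of a conformal surface and its frame $F=(\phi_1,\phi_2,\phi_3,\phi_4,\psi_1,\dots,\psi_n)$. Recall that for a minimal surface $x:M\to\R^{n+2}$, adding the point at infinity gives a point $[Y_0]$ with $Y_0$ lightlike such that $x$ is conformally the stereographic image; the classical fact (and the content of Theorem~\ref{minimal}) is that the mean curvature $2$-spheres of $x$ all pass through this point at infinity with a fixed tangent plane there, equivalently $Y_0\in V_p$ for all $p$. So ``$\Rightarrow$'' amounts to computing, in a conformal model adapted to $\R^{n+2}$, that the mean curvature sphere at every point contains the improper point.

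First I would set up the ``only if'' direction concretely. Choose an affine chart on $S^{n+2}=Q^{n+2}$ so that $y$ is the stereographic image of a minimal immersion $x:M\to\R^{n+2}$; concretely take the canonical lift $Y$ adapted to a conformal coordinate $z$ on $M$ and express $Y$ in terms of $x$ and the constant lightlike vector $Y_0$ corresponding to $\infty$. Minimality of $x$, i.e. $\Delta x=0$ in the conformal coordinate, translates via the structure equations \eqref{eq-moving} into a statement about $Y_{z\bar z}$ modulo $Y$; I would track how $Y, Y_z, Y_{\bar z}, Y_{z\bar z}$ are built from $x, x_z, x_{\bar z}$ and $Y_0$, and observe that the span $V=\mathrm{Span}_\R\{Y,\mathrm{Re}\,Y_z,\mathrm{Im}\,Y_z,Y_{z\bar z}\}$ always contains $Y_0$ because the ``$1$'' and ``$\langle x,x\rangle$'' components, together with $x$ and its first derivatives, already close up once $\Delta x=0$ kills the quadratic term in $Y_{z\bar z}$. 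This is the computation referenced on page 1573 of \cite{Ma-W1} and pages 378--379 of \cite{Helein}; I would reproduce it in the present frame conventions. Equivalently, in terms of Theorem~\ref{frame}, a constant lightlike $Y_0\in V$ for all $p$ is the same as a constant null direction annihilated by the connection $\nabla=\dd+(A_1\,\dd z+\text{c.c.})$ on $V$ together with lying in $V$ — which gives a clean reformulation to work with.

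For the converse, suppose $Gr$ contains a constant lightlike vector $Y_0$, so $Y_0\in V_p$ for all $p$ and $\langle Y_0,Y_0\rangle=0$. Since $V_p$ is Lorentzian of signature $(1,3)$ and $Y_0$ is null, one can, pointwise and then smoothly, write $Y_0=\lambda(p)Y_p + (\text{terms in }Y_z,Y_{\bar z})$; but arguing more invariantly, $Y_0$ being a global parallel null section of $V$ forces, via the structure equations \eqref{eq-moving}, relations among $s$ and $\kappa$. The cleanest route: pass to the affine chart determined by $Y_0$ (i.e. use $Y_0$ as the point at infinity), so that $y$ becomes a surface $x:M\to\R^{n+2}$; then $Y_0\in V_p$ for all $p$ says exactly that every mean curvature sphere passes through $\infty$, and unwinding the definition of the mean curvature sphere (it osculates $y$ to second order and is totally umbilical, here a round sphere/plane through $\infty$, i.e.\ an affine $2$-plane) forces the mean curvature vector $\vec H$ of $x$ to point toward $\infty$ in a way that, combined with the Willmore condition \eqref{eq-willmore}, yields $\vec H\equiv 0$. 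Here is where I would use that $y$ is Willmore (not merely conformal): the Willmore equation, after the conformal change removing $Y_0$, becomes the condition that $x$ is minimal, because the ``$+1$'' and ``$-K$'' terms in the Willmore functional are precisely the conformal compensation that makes $\int|\vec H|^2$ the right functional in $\R^{n+2}$, whose critical points with the sphere-through-$\infty$ constraint are the minimal surfaces.

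The main obstacle I expect is the bookkeeping in translating between the projective light-cone frame $F$ of Theorem~\ref{frame} and the Euclidean data $(x,\vec H, \text{second fundamental form})$ after choosing $Y_0$ as the improper point — in particular, making sure the constancy of $Y_0$ is correctly matched with ``the affine $2$-plane osculating $y$ at each point'', and that the Willmore PDE \eqref{eq-willmore} really does collapse to $\vec H=0$ rather than to some weaker CMC-type condition. A secondary subtlety is orientation and the distinction, emphasized after Definition~\ref{def-gauss}, between $SO^+(1,3)\times SO(n)$ and $S(O^+(1,3)\times O(n))$: one must check that a constant lightlike vector in $V_p$ is compatible with the chosen orientation of $V$, but since $Y_0$ can be taken in the forward light cone this causes no trouble. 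Because both implications are ``well-known'' and only the packaging into the present formalism is new, I would keep the proof short, citing \cite{Helein} and \cite{Ma-W1} for the core computation and supplying only the dictionary between their setup and ours.
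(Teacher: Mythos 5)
The paper itself offers no proof of Theorem \ref{minimal}: it is stated as a well-known fact with a pointer to the bottom of page 1573 of \cite{Ma-W1} and pages 378--379 of \cite{Helein}. So the only meaningful comparison is between your outline and that standard argument, and on the whole your outline reconstructs it correctly. The ``only if'' direction is exactly as you describe: for the isotropic lift $Y=Y_0'+x+|x|^2Y_0$ of a conformal $x:M\to\R^{n+2}$ one computes $Y_{z\bar z}=x_{z\bar z}+2\bigl(\langle x_z,x_{\bar z}\rangle+\langle x,x_{z\bar z}\rangle\bigr)Y_0$, and minimality ($x_{z\bar z}=\frac{1}{2}e^{2\omega}\vec H=0$) makes $Y_{z\bar z}$ a nonzero multiple of $Y_0$ modulo $\mathrm{Span}\{Y,Y_z,Y_{\bar z}\}$, so $Y_0\in V_p$ for all $p$.

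The one genuine soft spot is in your converse. You propose to get $\vec H\equiv 0$ from the Willmore equation \eqref{eq-willmore} ``combined with'' the sphere-through-$\infty$ condition, and you even suggest a variational mechanism (``critical points with the sphere-through-$\infty$ constraint are the minimal surfaces''). That deduction is not valid as stated --- there are plenty of Willmore surfaces in $\R^{n+2}$ that are not minimal, so Willmore plus some loosely formulated constraint cannot be the source of minimality --- and, more importantly, it is unnecessary. The correct step is purely pointwise and uses only the definition of the mean curvature $2$-sphere: $Y_0\in V_p$ with $Y_0$ null says precisely that $[Y_0]$ lies on the sphere $S_p=\{[v]:v\in V_p,\ \langle v,v\rangle=0\}$; in the affine chart with $[Y_0]=\infty$ this sphere is an affine $2$-plane; since $S_p$ is by construction tangent to $x$ at $x(p)$ with the \emph{same} mean curvature vector there, and a plane has vanishing mean curvature vector, $\vec H(p)=0$ follows immediately. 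In particular the equivalence holds for an arbitrary conformal immersion, not just Willmore ones; the Willmore hypothesis in the statement is context, not an ingredient of the proof. If you replace your variational paragraph by this one-line observation, your proof is complete and agrees with the argument the paper delegates to \cite{Ma-W1} and \cite{Helein}.
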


There exist Willmore surfaces which fail to be immersions at some points. To include surfaces of this type, we introduce the notion of {\em Willmore maps} and {\em strong Willmore maps}.
\begin{definition}
A smooth map $y$ from a Riemann surface $M$ to $S^{n+2}$ is called a Willmore map if it is a conformal Willmore immersion on an open dense subset $\hat{M}$ of $M$.  If $\hat{M}$ is maximal, then the points in $M_0=M\backslash \hat{M}$ are called branch points of $y$, at which points $y$ fails to be an immersion. Moreover, $y$ is called a strong Willmore map if it is a Willmore map and if the conformal Gauss map $Gr: \hat{M}\rightarrow SO^+(1,n+3)/SO^+(1,3)\times SO(n)$ of $y$ can be extended smoothly (and hence real analytically) to $M$.
\end{definition}
\begin{remark}
It is an interesting (open) problem under which conditions a  Willmore map will be a strong Willmore map.
\end{remark}

\begin{example}\
 \begin{enumerate}
 \item Let $x: \hat{M}\rightarrow \mathbb{R}^n$ be a complete minimal surface with finitely many ends $\{p_1,\cdots, p_r\}$. By the inverse  of the stereographic projection $x$ becomes a smooth map $y$ from a compact Riemann surface $M=\hat{M}\cup\{p_1,\cdots, p_r\}$ to $S^n$. If all the ends of $x$ are embedded planar ends, $y$ will be a Willmore immersion (\cite{Bryant1984}). If some planar ends $\{p_{j_1},\cdots, p_{j_t}\}$ fail to be embedded, $y$ will be a strong Willmore map with branch  points $\{p_{j_1},\cdots, p_{j_t}\}$. If some ends $\{\tilde{p}_{1},\cdots,\tilde{p}_{l}\}$ fail to be planar, $y$ will be a Willmore map with branch  points $\{\tilde{p}_{1},\cdots,\tilde{p}_{l}\}$ with its conformal Gauss map having no definition on these ends.

\item It is well-known that all minimal surfaces in Riemannian space forms
 can be considered to be  Willmore surfaces in some $S^m$ (\cite{Bryant1984,Weiner}).
These surfaces are basic examples of Willmore surfaces.
Moreover, they are S-Willmore surfaces, see \cite{Ejiri1988,Ma}.

\item The first non-minimal Willmore surface was given by Ejiri in \cite{Ejiri1982}.
This non-S-Willmore Willmore surface is a homogeneous torus in $S^5$.
Later, using the Hopf bundle, Pinkall produced a family of non-minimal Willmore tori in $S^3$ via elastic curves (\cite{Pinkall1985}).\\
 \end{enumerate}
\end{example}

\begin{remark} ({\em H\'{e}lein and Ma's harmonic maps})

In \cite{Helein}, H\'{e}lein extended the treatment of Bryant \cite{Bryant1984} to deal with Willmore surfaces in $S^3$ by using
a loop group method \cite{DPW}. He used two kinds of harmonic maps: the conformal Gauss map and the ones he  called  ``roughly harmonic maps". In terms of the notation used here, for a Willmore immersion $y$ in $S^3$ with
a local lift $Y$,  choose $\hat{Y}\in \Gamma (V)$ such that $\langle \hat{Y}, \hat{Y}\rangle=0,$  and $\langle Y, \hat{Y}\rangle=-1.$ Then  H\'{e}lein's roughly harmonic map is defined by
\begin{equation}\mathfrak{H}=Y\wedge \hat{Y}: M\rightarrow Gr_{1,1}(R^{5}_1).
\end{equation}
The reason of the name ``roughly harmonic" is that although $\mathfrak{H}$ may not be harmonic in general, it really provides another family of flat connections (see (36) page 350 in \cite{Helein} for details). If one assumes furthermore that  $\hat{Y}$ satisfies
\begin{equation}\label{H-M} \hat{Y}_{z}\in \hbox{Span}_{\C}\{\hat{Y}, Y, Y_z\} \mod V^{\perp}_{\C} ~ \ \hbox{ for all } z,
\end{equation}
$\mathfrak{H}$ will be a harmonic map. Especially, for a Willmore surface $y$ in $S^3$, there always exists a dual surface (Recall Definition \ref{def-dual} or see \cite{Bryant1984}).  When $\hat{Y}$ is chosen as the lift of the dual surface $\hat{y}$ of $y$, one obtains an interesting harmonic map connecting the original surface and its dual surface.
It is straightforward to generalize H\'{e}lein's notion of  roughly harmonic maps to the case of  Willmore immersions into $S^{n+2}$, since the definition above does not involve the co-dimensional information.
Such natural generalizations following  H\'{e}lein have been worked out in \cite{Xia-Shen}, by using the treatment of \cite{Wang1998} on Willmore submanifolds.

In a different development, in \cite{Ma2006}, Ma considered the generalization of the notion of  a dual surface for a Willmore surface $y$ in $S^{n+2}$.  Let $\hat{Y}\in \Gamma (V)$ with $\langle \hat{Y},\hat{Y}\rangle=0,$ and $ \langle Y, \hat{Y}\rangle=-1.$
Ma found that if $\hat{Y}$ satisfies
\begin{equation} \begin{split}
&\hat{Y}_{z}\in \hbox{Span}_{\C}\{\hat{Y}, Y, Y_z\} \mod V^{\perp}_{\C}  ~~\hbox{and }~~ \langle \hat{Y}_{z},\hat{Y}_{z}\rangle=0~ \ \hbox{ for all } z,\
\end{split}
\end{equation}
then $[\hat{Y}]$ is a new Willmore surface (which may degenerate to a point, see \cite{Ma2006}).
In this case $[\hat{Y}]$ is called  ``an adjoint surface" of $y$. Different from dual surfaces, the adjoint surface $[\hat{Y}]$ is in general  not unique (a detailed discussion on this can be found in \cite{Ma2006}). Moreover, Ma showed that for an adjoint surface
 $[\hat{Y}]$ the map  $\mathfrak{H}=Y\wedge \hat{Y}: M\rightarrow Gr_{1,1}(R^{n+3}_1)$ is a conformal harmonic map.
Obviously this harmonic map defined by Ma is just a special case of H\'{e}lein's harmonic maps used in  \cite{Helein}, \cite{Xia-Shen}, but it may be a particularly natural case (See \cite{Helein}).

Note that  for H\'{e}lein's harmonic map as well as for Ma's adjoint surfaces, it is usually not possible to prove
 global existence, since the solution of the equation \eqref{H-M} may have singularities. To be concrete, first note that \eqref{H-M} is exactly the Riccati equation \[\mu_{z}-\frac{\mu^2}{2}-s=0.\]
Note that for S-Willmore surfaces, $\mu$ may take the value $\infty$ at some points. Therefore, using the expression of the dual surface (see \cite{Ma2006}), at the points where $\mu$ approaches $\infty$, we have $[\hat{Y}]=[Y]$. This implies that the 2-dimensional Lorentzian bundle $Span_{\R}\{Y, \hat{Y}\}$ defined by $Y$ and $\hat{Y}$ reduces to a 1-dimensional lightlike bundle at these points. It stays unknown how to deal with the global properties for this kind of harmonic maps by using H\'{e}lein's approach.
  This is one of the reasons why we use the conformal Gauss map to study Willmore
surfaces, although the computations using H\'{e}lein's harmonic map would perhaps be somewhat easier.
 this is exactly why we chose this approach.
\end{remark}


\section{Conformally harmonic maps into $SO^+(1,n+3)/SO^+(1,3)\times SO(n)$}

In this section, we first review the basic description of harmonic maps.
Then we apply it to the harmonic maps into
 $SO^+(1,n+3)/SO^+(1,3)\times SO(n)$. Since not every such conformally harmonic map
is the conformal Gauss map of some  strong Willmore map, we give a
necessary and sufficient condition for a conformally harmonic map to be the conformal Gauss map of a strong Willmore map.


\subsection{Harmonic maps into the symmetric space G/K}

Let $N=G/K$ be a symmetric space with involution $\sigma: G\rightarrow G$ such
that $G^{\sigma}\supset K\supset(G^{\sigma})_0$. Let $\pi:G\rightarrow G/K$ denote the projection of $G$ onto $G/K$. Let $\mathfrak{g}$ and $\mathfrak{k}$ denote the
Lie algebras of $G$ and $K$ respectively. The involution $\sigma$ induces the Cartan decomposition
$\mathfrak{g}=\mathfrak{k}\oplus\mathfrak{p},$ with $ [\mathfrak{k},\mathfrak{k}]\subset\mathfrak{k},\
 [\mathfrak{k},\mathfrak{p}]\subset\mathfrak{p}$ and $[\mathfrak{p},\mathfrak{p}]\subset\mathfrak{k}.$

Let $f:M\rightarrow G/K$ be a conformally harmonic map from a connected Riemann surface $M$.
Let $U\subset M$ be an open contractible subset.
Then there exists a frame $F: U\rightarrow G$ such that $f=\pi\circ F$ on $U$.
Let $\alpha$ denote the Maurer-Cartan form of $F$. Then $\alpha$ satisfies the Maurer-Cartan equation
and altogether we have
$F^{-1}\dd F= \alpha,$ with $\dd \alpha+\frac{1}{2}[\alpha\wedge\alpha]=0.$
Decomposing $\alpha$ with respect to $\mathfrak{g}=\mathfrak{k}\oplus\mathfrak{p}$ we obtain
\begin{equation*}\alpha=\alpha_{ \mathfrak{k}  } +\alpha_{ \mathfrak{p} },\ \hbox{ with }\
\alpha_{\mathfrak{k  }}\in \Gamma(\mathfrak{k}\otimes T^*M),\
\alpha_{ \mathfrak{p }}\in \Gamma(\mathfrak{p}\otimes T^*M).
\end{equation*}
Next we decompose $\alpha_{\mathfrak{p}}$ further into the $(1,0)-$part $\alpha_{\mathfrak{p}}'$ and the $(0,1)-$part $\alpha_{\mathfrak{p}}''$,
and set  \begin{equation}\label{eq-harmonic-lam}
\alpha_{\lambda}=\lambda^{-1}\alpha_{\mathfrak{p}}'+\alpha_{\mathfrak{k}}+\lambda\alpha_{\mathfrak{p}}'', \hspace{5mm}  \lambda\in S^1.
\end{equation}

\begin{lemma}\label{lemma-harmonic} $($\cite{DPW}$)$ The map  $f:M\rightarrow G/K$ is harmonic if and only if
\begin{equation}\label{integr}d
\alpha_{\lambda}+\frac{1}{2}[\alpha_{\lambda}\wedge\alpha_{\lambda}]=0,\ \ \hbox{for all}\ \lambda \in S^1.
\end{equation}
\end{lemma}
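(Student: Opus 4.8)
The plan is to reduce both directions of the equivalence to one and the same condition on the components of $\alpha=F^{-1}\dd F$, by expanding the zero-curvature equation for $\alpha_\lambda$ into powers of $\lambda$ and invoking the Maurer--Cartan equation for $\alpha$ together with the symmetric-space relations $[\mathfrak k,\mathfrak k]\subset\mathfrak k$, $[\mathfrak k,\mathfrak p]\subset\mathfrak p$, $[\mathfrak p,\mathfrak p]\subset\mathfrak k$.

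First I would record the frame-level form of harmonicity. Identifying $T(G/K)$ with the bundle associated to $F$ via the isotropy action of $K$ on $\mathfrak p$, the differential $\dd f$ corresponds to $\alpha_{\mathfrak p}$ and the pull-back of the canonical connection of $G/K$ to $\dd+\operatorname{ad}(\alpha_{\mathfrak k})$; on a Riemann surface the harmonic map equation $\tau(f)=0$ depends only on the conformal class and is equivalent to $\dd^{\nabla}(\ast\,\dd f)=0$, that is
\[
\dd(\ast\alpha_{\mathfrak p})+[\alpha_{\mathfrak k}\wedge\ast\alpha_{\mathfrak p}]=0.
\]
Writing $\alpha_{\mathfrak p}=\alpha_{\mathfrak p}'+\alpha_{\mathfrak p}''$ in $(1,0)$ and $(0,1)$ parts and using $\ast\alpha_{\mathfrak p}'=-i\,\alpha_{\mathfrak p}'$, $\ast\alpha_{\mathfrak p}''=i\,\alpha_{\mathfrak p}''$, this reads $E'=E''$, where
\[
E':=\dd\alpha_{\mathfrak p}'+[\alpha_{\mathfrak k}\wedge\alpha_{\mathfrak p}'],\qquad E'':=\dd\alpha_{\mathfrak p}''+[\alpha_{\mathfrak k}\wedge\alpha_{\mathfrak p}'']=\overline{E'}.
\]
Since $\alpha=F^{-1}\dd F$ always satisfies $\dd\alpha+\tfrac12[\alpha\wedge\alpha]=0$, whose $\mathfrak p$-component is exactly $E'+E''=0$, it follows that $f$ is harmonic precisely when $E'=0$ (equivalently $E''=0$).

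Next I would expand $\dd\alpha_\lambda+\tfrac12[\alpha_\lambda\wedge\alpha_\lambda]$ with $\alpha_\lambda=\lambda^{-1}\alpha_{\mathfrak p}'+\alpha_{\mathfrak k}+\lambda\,\alpha_{\mathfrak p}''$ and collect powers of $\lambda$. The coefficients of $\lambda^{\pm2}$ are $\tfrac12[\alpha_{\mathfrak p}'\wedge\alpha_{\mathfrak p}']$ and $\tfrac12[\alpha_{\mathfrak p}''\wedge\alpha_{\mathfrak p}'']$, which vanish because a $(1,0)$- (resp.\ $(0,1)$-) form on a Riemann surface wedged with itself is zero; the coefficient of $\lambda^{0}$ is $\dd\alpha_{\mathfrak k}+\tfrac12[\alpha_{\mathfrak k}\wedge\alpha_{\mathfrak k}]+\tfrac12[\alpha_{\mathfrak p}\wedge\alpha_{\mathfrak p}]$, which is the $\mathfrak k$-component of the Maurer--Cartan equation for $\alpha$, hence identically zero; and the coefficients of $\lambda^{-1}$ and $\lambda^{1}$ work out to be $E'$ and $E''$. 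Therefore
\[
\dd\alpha_\lambda+\tfrac12[\alpha_\lambda\wedge\alpha_\lambda]=\lambda^{-1}E'+\lambda\,E''.
\]
As $E',E''$ are independent of $\lambda$ while $\lambda\mapsto\lambda^{-1}$ and $\lambda\mapsto\lambda$ are linearly independent on $S^1$, this family of $2$-forms vanishes for all $\lambda\in S^1$ if and only if $E'=E''=0$, i.e.\ if and only if $f$ is harmonic by the previous step.

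The only genuinely non-formal ingredient is the first step: passing from the tension-field equation $\tau(f)=0$ to the flat expression $\dd(\ast\alpha_{\mathfrak p})+[\alpha_{\mathfrak k}\wedge\ast\alpha_{\mathfrak p}]=0$, i.e.\ the dictionary between the moving-frame data $(\alpha_{\mathfrak k},\alpha_{\mathfrak p})$ and $(\text{connection},\dd f)$. Everything after that is bookkeeping of $\lambda$-powers. Since the statement is classical (\cite{DPW}), in the final text I would present this computation compactly rather than re-deriving the frame/connection correspondence in detail.
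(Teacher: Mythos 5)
Your proposal is correct and is precisely the standard argument from \cite{DPW} that the paper relies on: the paper itself gives no proof of this lemma, only the citation, and its follow-up reformulation ($\dd\alpha_{\mathfrak{k}}+\frac{1}{2}[\alpha_{\mathfrak{k}}\wedge\alpha_{\mathfrak{k}}]+\frac{1}{2}[\alpha_{\mathfrak{p}}\wedge\alpha_{\mathfrak{p}}]=0$ together with $\bar{\partial}\alpha_{\mathfrak{p}}'+[\alpha_{\mathfrak{k}}\wedge\alpha_{\mathfrak{p}}']=0$) is exactly the system $E'=E''=0$ you extract from the $\lambda$-expansion. Your bookkeeping of the $\lambda^{\pm2}$, $\lambda^{\pm1}$ and $\lambda^{0}$ coefficients and the reduction of harmonicity to $E'=0$ via the Maurer--Cartan identity $E'+E''=0$ are all sound, so nothing further is needed.
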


\begin{definition} Let $f:M\rightarrow G/K$ be harmonic
and $\alpha_{\lambda}$  the differential one-form defined above. Since
$\alpha_{\lambda}$ satisfies the integrability condition \eqref{integr},  we consider
the equation
\[\left\{\begin{split}\dd F(z,\lambda)&= F(z, \lambda)\alpha_{\lambda},\\
F(z_0,\lambda)&=\mathbf e\\
\end{split}\right.
\]
on any contractible open subset $U \subset M$,
where $z_0$ is a fixed base point on $U$, and $\mathbf e$ is the identity element in $G$. The map $F(z, \lambda)$
is called the {\em extended frame}
 of the harmonic map $f$ normalized at the base point $z=z_0$. Note that $F$ satisfies $F(z,\lambda =1)=F(z)$.
 \end{definition}

Consider $TM^{\mathbb{C}}=T'M\oplus T''M$ and write $\dd=\partial+\bar{\partial}$.
Then Lemma \ref{lemma-harmonic} can be restated as
\begin{lemma} $($\cite{DPW}$)$ The map $f:M\rightarrow G/K$ is harmonic if and only if
\begin{equation}
\left\{\begin{array}{ll}
 & \dd\alpha_{\mathfrak{k}}+\frac{1}{2}[\alpha_{\mathfrak{k}}\wedge\alpha_{\mathfrak{k}}]+\frac{1}{2}[\alpha_{\mathfrak{p}}\wedge\alpha_{\mathfrak{p}}]=0,\\
& \bar{\partial}\alpha_{\mathfrak{p}}'+[\alpha_{\mathfrak{k}}\wedge\alpha_{\mathfrak{p}}']=0.
\end{array}\right. \end{equation}
\end{lemma}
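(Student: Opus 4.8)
The plan is to deduce the Lemma directly from Lemma \ref{lemma-harmonic} by expanding the zero-curvature equation \eqref{integr} as a Laurent polynomial in $\lambda$ and collecting coefficients. Writing $a=\alpha_{\mathfrak{p}}'$, $b=\alpha_{\mathfrak{k}}$, $c=\alpha_{\mathfrak{p}}''$, so that $\alpha_{\lambda}=\lambda^{-1}a+b+\lambda c$, a direct substitution gives
\[
\dd\alpha_{\lambda}+\frac12[\alpha_{\lambda}\wedge\alpha_{\lambda}]=\sum_{k=-2}^{2}\Omega_k\,\lambda^{k},
\]
where each $\Omega_k$ is a $\mathfrak{g}^{\mathbb{C}}$-valued $2$-form on $U$ assembled from $a,b,c$ by $\dd$ and the bracket. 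For each fixed point this is a genuine finite Laurent polynomial in $\lambda$, hence it vanishes for all $\lambda\in S^1$ if and only if $\Omega_k=0$ for every $k$. So the task reduces to identifying the five coefficients and sorting out which are automatic and which are independent.

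Next I would compute the $\Omega_k$ using three inputs: (i) $M$ is a Riemann surface, so $T^*M^{\mathbb{C}}=T'M^*\oplus T''M^*$ and there are no nonzero $(2,0)$- or $(0,2)$-forms; since $a$ has type $(1,0)$ and $c$ has type $(0,1)$, this forces $[a\wedge a]=[c\wedge c]=0$ and $\dd a=\bar\partial a$, $\dd c=\partial c$; (ii) the Cartan relations $[\mathfrak{k},\mathfrak{k}]\subset\mathfrak{k}$, $[\mathfrak{k},\mathfrak{p}]\subset\mathfrak{p}$, $[\mathfrak{p},\mathfrak{p}]\subset\mathfrak{k}$; (iii) the identity $[\omega\wedge\eta]=[\eta\wedge\omega]$ for $\mathfrak{g}$-valued $1$-forms. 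These give $\Omega_{2}=\frac12[c\wedge c]=0$ and $\Omega_{-2}=\frac12[a\wedge a]=0$ identically; $\Omega_{-1}=\dd a+[a\wedge b]=\bar\partial\alpha_{\mathfrak{p}}'+[\alpha_{\mathfrak{k}}\wedge\alpha_{\mathfrak{p}}']$ and $\Omega_{1}=\dd c+[b\wedge c]=\partial\alpha_{\mathfrak{p}}''+[\alpha_{\mathfrak{k}}\wedge\alpha_{\mathfrak{p}}'']$; and $\Omega_0=\dd b+\frac12[b\wedge b]+[a\wedge c]$. Finally, since $[\alpha_{\mathfrak{p}}\wedge\alpha_{\mathfrak{p}}]=[(a+c)\wedge(a+c)]=2[a\wedge c]$ (the pure-type terms dropping out), $\Omega_0$ is exactly $\dd\alpha_{\mathfrak{k}}+\frac12[\alpha_{\mathfrak{k}}\wedge\alpha_{\mathfrak{k}}]+\frac12[\alpha_{\mathfrak{p}}\wedge\alpha_{\mathfrak{p}}]$, the first equation of the Lemma.

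It remains to remove the redundancy. For $\lambda\in S^1$ the form $\alpha_{\lambda}$ is again real (i.e. $\mathfrak{g}$-valued), because $\overline{\alpha_{\mathfrak{p}}'}=\alpha_{\mathfrak{p}}''$ and $\overline{\lambda}=\lambda^{-1}$; consequently the curvature Laurent polynomial satisfies $\overline{\Omega_k}=\Omega_{-k}$. Hence $\Omega_1=0\iff\Omega_{-1}=0$ (so the $\lambda^{+1}$ coefficient carries no new information) and $\Omega_{\pm2}=0$ is automatic, so \eqref{integr} holds for all $\lambda\in S^1$ if and only if $\Omega_{-1}=0$ and $\Omega_0=0$, which are precisely the two displayed equations; combined with Lemma \ref{lemma-harmonic} this yields the stated equivalence. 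I do not expect a real obstacle here: the only thing requiring care is keeping the $(1,0)/(0,1)$ type decomposition and the bracket convention $[\omega\wedge\eta]=[\eta\wedge\omega]$ consistent throughout the computation, and the proof uses no analytic input beyond Lemma \ref{lemma-harmonic}.
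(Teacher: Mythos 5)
Your proof is correct and is precisely the argument the paper intends: it presents this lemma as a restatement of Lemma \ref{lemma-harmonic} (citing \cite{DPW}), and the restatement is obtained exactly by expanding $\dd\alpha_{\lambda}+\frac{1}{2}[\alpha_{\lambda}\wedge\alpha_{\lambda}]$ as a Laurent polynomial in $\lambda$, noting that the $\lambda^{\pm 2}$ coefficients vanish by type reasons on a Riemann surface and that reality of $\alpha_{\lambda}$ for $\lambda\in S^1$ makes the $\lambda^{+1}$ equation the conjugate of the $\lambda^{-1}$ equation. Your handling of the type decomposition, the symmetry $[\omega\wedge\eta]=[\eta\wedge\omega]$ for $1$-forms, and the identity $[\alpha_{\mathfrak{p}}\wedge\alpha_{\mathfrak{p}}]=2[\alpha_{\mathfrak{p}}'\wedge\alpha_{\mathfrak{p}}'']$ is all accurate, so no gap remains.
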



\subsection{Harmonic maps into $SO^+(1,n+3)/SO^+(1,3)\times SO(n)$}

 Let's consider again $\mathbb{R}^{n+4}_1$, with the metric introduced in Section 2 and  the group   $SO^+(1,n+3)$ together with its Lie algebra
\begin{equation}\label{eq-so(1,n+3)}
\mathfrak{so}(1,n+3)=\mathfrak{g}=\{X\in gl(n+4,\mathbb{R})|X^tI_{1,n+3}+I_{1,n+3}X=0\}.
\end{equation}
Here $I_{1,n+3}=\hbox{diag}(-1,1,\cdots,1).$

Consider the involution
 \begin{equation}
\begin{array}{ll}
\sigma:  SO^+(1,n+3)& \rightarrow SO^+(1,n+3)\\
 \ \ \ \ \ \ \ A&\mapsto D^{-1} A D,
\end{array}\end{equation}
with
 $$D=\left(
         \begin{array}{ccccc}
             -I_{4} & 0 \\
            0 & I_n \\
         \end{array}
       \right),
       $$
       where $I_k$ denotes the $k\times k$ identity matrix. Then the fixed point group $SO^+(1,n+3)^{\sigma}$ of $\sigma$ contains $SO^+(1,3)\times SO(n)$,  where
$SO^+(1,3)$ denotes the connected component of $SO(1,3)$ containing $I$.
Moreover we have $SO^+(1,n+3)^{\sigma}\supset SO^+(1,3)\times SO(n) = (SO^+(1,n+3)^{\sigma})^0$,
where the superscript $0$ denotes  the connected component containing the identity element. On the Lie algebra level we obtain
   \begin{equation*}\begin{split}&\mathfrak{g}=
\left\{\left(
                   \begin{array}{cc}
                     A_1 & B_1 \\
                     -B_1^tI_{1,3} & A_2 \\
                   \end{array}
                 \right)
 |\ A_1^tI_{1,3}+I_{1,3}A_1=0, \ \ A_2+A_2^t=0\right\},\\
&\mathfrak{k}=\left\{\left(
                   \begin{array}{cc}
                     A_1 &0 \\
                     0 & A_2 \\
                   \end{array}
                 \right)
 | \ A_1^tI_{1,3}+I_{1,3}A_1=0,\ \ A_2+A_2^t=0\right\},\ \mathfrak{p}=\left\{\left(
                   \begin{array}{cc}
                   0 & B_1 \\
                     -B_1^tI_{1,3} & 0 \\
                   \end{array}
                 \right)
\right\}.
\end{split}
\end{equation*}

Now let $f: M\rightarrow SO^+(1,n+3)/SO^+(1,3)\times SO(n)$
 be a harmonic map with local frame $F: U\rightarrow SO^+(1,n+3)$ and Maurer-Cartan form $\alpha$ on some contractible open subset $U$ of $M$.
Let $z$ be a local complex coordinate on $U$. Writing
\begin{equation}\label{eq-B0}
    \alpha_{\mathfrak{k}}'=\left(
                   \begin{array}{cc}
                     A_1 &0 \\
                     0 & A_2 \\
                   \end{array}
                 \right) \dd z,\ \hbox{ and }\ \alpha_{\mathfrak{p}}'=\left(
                   \begin{array}{cc}
                     0 & B_1 \\
                     -B_1^tI_{1,3} & 0 \\
                   \end{array}\right)\dd z,\end{equation}
the harmonic map equations can be rephrased equivalently in the form
\begin{equation} \label{harmonic}
\left\{\begin{split}
 & Im \left( A_{1\bar{z}} +\bar{A}_1A_1-\bar{B}_1B_1^tI_{1,3}\right)=0,\\
 & Im \left(  A_{2\bar{z}} +\bar{A}_2A_2-\bar{B}_1^tI_{1,3}B_1\right)=0,\\
 &  B_{1\bar{z}} +\bar{A}_1B_1-B_1\bar{A}_2=0.
\end{split}\right. \end{equation}

In Section 2 we have seen that the Maurer Cartan form of the frame associated with a
Willmore surface in $S^{n+2}$ has a very special form. Fortunately, it is easy to detect when such a special form can be obtained by gauging.
A crucial part of our paper is the following result.

\begin{theorem}\label{normalizationlemma} Let $B_1$, as in \eqref{eq-B0}, be part of the Maurer-Cartan form of some non-constant harmonic map which is defined on the open, contractible Riemann surface $U$.
Then
\begin{equation}\label{eq-B1-2} B_1^tI_{1,3}B_1=0
\end{equation}
if and only if
there exists  a real analytic map $\mathbb{A}:  U\rightarrow SO^+(1,3)$ such that
\begin{equation}\label{eq-B1-standard}
\mathbb{A}B_1=\left(
      \begin{array}{ccc}
        \sqrt{2}\beta_1 & \cdots &\sqrt{ 2}\beta_n \\
        -\sqrt{2}\beta_1 & \cdots &  -\sqrt{ 2}\beta_n \\
        -k_1 & \cdots & -k_n \\
        -ik_1 & \cdots & -ik_n \\
      \end{array}
    \right) ,\ \hbox{or }~~
    \mathbb{A}B_1=\left(
      \begin{array}{ccc}
        \sqrt{2}\beta_1 & \cdots &\sqrt{ 2}\beta_n \\
        -\sqrt{2}\beta_1 & \cdots &  -\sqrt{ 2}\beta_n \\
        -k_1 & \cdots & -k_n \\
        ik_1 & \cdots & ik_n \\
      \end{array}
    \right)\ \hbox{ on }    U.
\end{equation}
\end{theorem}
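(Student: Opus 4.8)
The plan is to work pointwise first and then globalize via real-analyticity. Fix $z\in U$. The hypothesis $B_1^tI_{1,3}B_1=0$ says that the column span $W=B_1(\mathbb{C}^n)\subset\mathbb{C}^4$ is a totally isotropic subspace of $(\mathbb{C}^4,I_{1,3})$; since the signature is $(1,3)$, such a $W$ has complex dimension at most $2$. So at each point the rank of $B_1$ is $0$, $1$, or $2$. The first step is to dispose of the degenerate cases and understand the generic one. If $\mathrm{rank}\,B_1=2$, then $W$ is a maximal isotropic subspace; the two model matrices on the right-hand side of \eqref{eq-B1-standard} are exactly two representatives of the two $SO^+(1,3)$-orbits of such subspaces (their column spans are spanned by $(1,-1,-k_0,-ik_0)$-type vectors together with $(\beta$-vectors$)$, and the sign of the fourth row distinguishes the two self-dual/anti-self-dual families of isotropic $2$-planes, which lie in different $SO^+(1,3)$-orbits). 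The key linear-algebra fact to record is: the group $SO^+(1,3)$ acts transitively on each of the two connected families of maximal isotropic $2$-planes of $(\mathbb{C}^4,I_{1,3})$, and moreover one can choose the normalizing element $\mathbb{A}$ to additionally send the specific frame of $W$ coming from the columns of $B_1$ to the specific frame appearing in \eqref{eq-B1-standard} — this is where the quantities $\beta_j,k_j$ get their meaning, i.e. $k=\sqrt{\sum|k_j|^2}$ is (up to the isotropy that remains) the length-type invariant and the $\beta_j$ are the remaining coordinates. One checks directly that the model column vectors have the correct isotropy relations ($\langle\cdot,\cdot\rangle_{I_{1,3}}=0$ on all pairs), so the models are admissible targets.

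The second step is the lower-rank strata. Where $\mathrm{rank}\,B_1\le 1$ the statement is still required (the models degenerate when all $\beta_j=0$, or when all $k_j=0$ and only the $\beta$-row survives), so one must argue that these loci do not obstruct the construction of a \emph{single} real-analytic $\mathbb{A}$ on all of $U$. Here I would invoke that $B_1$ is part of the Maurer–Cartan form of a real-analytic harmonic map (harmonic maps into symmetric spaces are real-analytic in the appropriate sense), hence $B_1:U\to M_{4\times n}(\mathbb{C})$ is real-analytic; since $f$ is non-constant, $B_1\not\equiv 0$, so the maximal-rank locus is open and dense and its complement is a real-analytic subvariety of lower dimension. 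On the dense open set $U_0$ where the rank is maximal and locally constant, the orbit structure above lets one build $\mathbb{A}$ locally; the real content is that these local choices patch. This is the step I expect to be the main obstacle: one must show (a) that the two cases in \eqref{eq-B1-standard} cannot be mixed, i.e. the "self-dual vs. anti-self-dual" dichotomy is locally constant on $U_0$ and hence (by connectedness of $U$) globally constant, and (b) that after fixing the case, the ambiguity in $\mathbb{A}$ is governed by the (connected) isotropy subgroup of the model plane inside $SO^+(1,3)$, so that a real-analytic choice exists globally on the contractible $U$ — then (c) that this real-analytic $\mathbb{A}$ defined a priori only on $U_0$ extends real-analytically across the lower-dimensional bad locus, by continuity plus the identity theorem for real-analytic maps. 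This is precisely the kind of normalization argument whose careful execution is deferred to the Appendix, as the excerpt announces.

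The third step is the converse direction, which is comparatively easy: if some $\mathbb{A}\in SO^+(1,3)$ brings $B_1$ into one of the two stated normal forms, then a direct computation with the model matrix shows $(\mathbb{A}B_1)^tI_{1,3}(\mathbb{A}B_1)=0$ (each model has columns that are mutually $I_{1,3}$-isotropic — this is an immediate bilinear computation: the "$\sqrt2\beta$" entries contribute $-2\beta_i\beta_j+2\beta_i\beta_j=0$ from the first two rows against $I_{1,3}=\mathrm{diag}(-1,1,1,1)$, and the "$k$" rows contribute $k_ik_j+(\pm i k_i)(\pm i k_j)=k_ik_j-k_ik_j=0$), and since $\mathbb{A}^tI_{1,3}\mathbb{A}=I_{1,3}$ we recover $B_1^tI_{1,3}B_1=0$. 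So the overall structure is: converse by a one-line matrix identity; forward direction by orbit analysis of isotropic planes under $SO^+(1,3)$ at the generic rank, plus a real-analytic patching/extension argument to produce the single global $\mathbb{A}$ on $U$, with the dichotomy between the two normal forms being a connectedness consequence.
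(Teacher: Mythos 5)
Your converse direction is correct and matches the paper's (a one-line computation with $I_{1,3}=\mathrm{diag}(-1,1,1,1)$), and your pointwise picture for the forward direction --- the column span of $B_1$ is totally isotropic, hence of dimension $\le 2$, and $SO^+(1,3)$ acts transitively on each of the two families of maximal isotropic planes --- is also right and is implicitly the starting point of the paper's argument. But the step you yourself flag as ``the main obstacle'' is exactly where your proposal has a genuine gap, and the device you propose for it does not work. A real-analytic normalizer $\mathbb{A}$ constructed on the dense open set where $\mathrm{rank}\,B_1$ is maximal need not extend across the degeneracy locus: the identity theorem guarantees at most uniqueness of an extension, never existence, and in general the normalizing matrix blows up as the columns of $B_1$ degenerate (to put a vector that vanishes at a point into a fixed normal form one is implicitly dividing by quantities that vanish there). ``Continuity plus the identity theorem'' therefore cannot close the argument, and nothing in your proposal controls $\mathbb{A}$ near the bad set.

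The paper's proof resolves precisely this point by a tool you do not use: it invokes the DPW loop group construction to write $B_1=S_1R_1S_2$ with $R_1$ \emph{holomorphic} (the $\lambda^{-1}$-coefficient of a holomorphic potential) and $S_1,S_2$ real analytic. Holomorphy is what permits factoring each column as $r_j=h_j\hat r_j$ with $h_j$ holomorphic and $\hat r_j$ \emph{nowhere vanishing} on $U$ --- a factorization that is false for merely real-analytic vector functions --- after which the normalization lemma is applied to the nowhere-vanishing $\hat q_1$ and the remaining columns are handled through their orthogonality relations in the $\mathrm{Mat}(2,\C)$ model. The other ingredient you gesture at but do not execute, namely producing a single $\mathbb{A}$ on all of $U$ from local choices, is done in the paper by an explicit Cousin-type argument: the local normalizers differ by an upper-triangular $SL(2,\C)$-valued cocycle, which is a coboundary because $U$ is contractible. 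So the skeleton of your argument is compatible with the paper's, but without the holomorphic-potential factorization (or some substitute controlling $B_1$ near its zero and rank-drop locus) the forward direction is not proved.
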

\begin{proof}See Appendix B.\end{proof}
\begin{remark}  Recall  from  \eqref{defkappabeta}  that $k_j\equiv 0$ for all $j$ on an open subset implies $\kappa = 0$ and hence the  surface and its whole associated family  is totally umbilical. In particular, such surfaces have $B_1\equiv0$ and thus describe surfaces conformally equivalent to a round sphere. Such surfaces are trivial and therefore will not be considered.
\end{remark}

\begin{lemma} Let $f: U\rightarrow SO^+(1,n+3)/SO^+(1,3)\times SO(n)$ be a non-constant harmonic map on a contractible open Riemann surface $U$
 with two frames
$F,\ \hat{F}:U \rightarrow SO^+(1,n+3)$ and Maurer-Cartan forms $\alpha,\hat{\alpha}$.
Using a local complex coordinate $z$ on $U$, we write
$$  \alpha_{\mathfrak{p}}'=\left(
                   \begin{array}{cc}
                     0 & B_1 \\
                     -B_1^tI_{1,3} & 0 \\
                   \end{array}\right)\dd z, \hspace{1cm} \hat\alpha_{\mathfrak{p}}'=\left(
                   \begin{array}{cc}
                     0 & \hat{B}_1 \\
                     -\hat{B}_1^tI_{1,3} & 0 \\
                   \end{array}\right)\dd z.$$
Then
\begin{enumerate}
\item $B_1^t I_{1,3} B_1 = 0$ if and only if $  \hat{B}_1^t I_{1,3} \hat{B}_1 = 0$;
\item $\bar B_1^t I_{1,3} B_1 = 0$ if and only if $  \bar{\hat{B}}_1^t I_{1,3} \hat{B}_1 = 0.$
\end{enumerate}
\end{lemma}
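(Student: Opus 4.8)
The plan is to exploit the fact that the two frames $F$ and $\hat F$ of the same harmonic map differ by a gauge: there is a real analytic map $k = \left(\begin{smallmatrix} k_1 & 0 \\ 0 & k_2\end{smallmatrix}\right): U \rightarrow SO^+(1,3)\times SO(n)$ with $\hat F = Fk$, where $k_1: U \rightarrow SO^+(1,3)$ and $k_2: U \rightarrow SO(n)$. First I would compute how $\alpha_{\mathfrak{p}}'$ transforms under this gauge change. Since $\hat\alpha = k^{-1}\alpha k + k^{-1}\dd k$ and the second term is $\mathfrak{k}$-valued (because $k$ takes values in $K$), the $\mathfrak{p}'$-part transforms tensorially: $\hat\alpha_{\mathfrak{p}}' = k^{-1}\alpha_{\mathfrak{p}}' k$. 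Reading off the upper-right block using the block forms of $k$ and $\alpha_{\mathfrak{p}}'$, this gives precisely
\[
\hat{B}_1 = k_1^{-1} B_1 k_2 .
\]

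With this transformation rule in hand, both statements become elementary. For (1): since $k_1 \in SO^+(1,3)$, we have $k_1^t I_{1,3} k_1 = I_{1,3}$, equivalently $k_1^t I_{1,3} = I_{1,3} k_1^{-1}$. Therefore
\[
\hat{B}_1^t I_{1,3}\hat{B}_1 = k_2^t B_1^t (k_1^{-1})^t I_{1,3} k_1^{-1} B_1 k_2 = k_2^t B_1^t I_{1,3} B_1 k_2 ,
\]
using $(k_1^{-1})^t I_{1,3} k_1^{-1} = (k_1^t I_{1,3} k_1)^{-1}\cdot$ — more directly, from $k_1^t I_{1,3} k_1 = I_{1,3}$ one gets $I_{1,3} = k_1 I_{1,3} k_1^t$ hence $(k_1^{-1})^t I_{1,3} k_1^{-1} = I_{1,3}$. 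Since $k_2$ is invertible, $\hat{B}_1^t I_{1,3}\hat{B}_1 = 0 \iff B_1^t I_{1,3} B_1 = 0$, and the statement is symmetric in $F, \hat F$. For (2): taking complex conjugates, $\bar{\hat{B}}_1 = \bar{k}_1^{-1}\bar{B}_1\bar{k}_2 = k_1^{-1}\bar{B}_1 k_2$ since $k_1, k_2$ are real analytic with real matrix entries (they take values in the real groups $SO^+(1,3)$, $SO(n)$). Hence
\[
\bar{\hat{B}}_1^t I_{1,3}\hat{B}_1 = k_2^t \bar{B}_1^t (k_1^{-1})^t I_{1,3} k_1^{-1} B_1 k_2 = k_2^t \bar{B}_1^t I_{1,3} B_1 k_2 ,
\]
so again the vanishing is equivalent on both sides.

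The only genuinely substantive point — and the one I would state carefully rather than wave at — is the existence of the global gauge $k: U \rightarrow K$ with $\hat F = Fk$ and $k$ real analytic. This is where I expect the reader might want detail: $U$ is contractible and $\pi\circ F = \pi\circ\hat F = f$, so pointwise $F(p)^{-1}\hat F(p)$ lies in the stabilizer, which is a connected subgroup containing $SO^+(1,3)\times SO(n)$; one must check it equals $SO^+(1,3)\times SO(n)$ (this uses the precise choice of $K$ in the oriented symmetric space, as emphasized after Definition \ref{def-gauss}) and that $p \mapsto F(p)^{-1}\hat F(p)$ inherits real analyticity, which follows because $F$ and $\hat F$ are frames of a harmonic map and hence real analytic. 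Everything else is the block bookkeeping above, which is routine.
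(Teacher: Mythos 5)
Your proposal is correct and follows essentially the same route as the paper: write $\hat F = F k$ with $k$ taking values in $SO^+(1,3)\times SO(n)$, observe that $\hat B_1 = k_1^{-1}B_1 k_2$ since the $\dd k$ term is $\mathfrak{k}$-valued, and conjugate the quadratic expressions, using realness of $k_1,k_2$ for part (2). The paper states the existence of the gauge $k$ without the extra justification you supply, but the core computation is identical.
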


\begin{proof}
Since $F$ and $\hat{F}$ are lifts of the same harmonic map $f$, there exists $F_0=\hbox{diag}(   F_{01},F_{02}): U\rightarrow SO^+(1,3)\times SO(n)
$ such that $\hat{F}=F\cdot F_0$. Then
$\hat{\alpha}=F_0^{-1}\alpha F_0+F_0^{-1}dF_0$, yielding $\hat{B}_1=F_{01}^{-1}B_1F_{02}$. So
$\hat{B}_1^t I_{1,3} \hat{B}_1= F_{02}^{-1}B_1^tF_{01}^{-1,t}I_{1,3}F_{01}^{-1}B_1F_{02}= F_{02}^{-1}B_1^tI_{1,3}B_1F_{02}.$
The last statement comes from the fact that $F_{01}$ and $F_{02}$ are real matrices.
\end{proof}

\begin{definition} \label{stronglyconfharm}
Let $f: M\rightarrow SO^+(1,n+3)/SO^+(1,3)\times SO(n)$ be a harmonic map. We call $f$ a {\bf strongly conformally harmonic map} if for any point $p\in M$, there exists a neighborhood $U_p$ of $p$ and a frame $F$ (with Maurer-Cartan form  $\alpha$) of $y$ on $U_p$ satisfying
\begin{equation}\label{eq-Willmore harmonic} B_1^t I_{1,3} B_1 = 0, ~ \hbox{where }~ \alpha_{\mathfrak{p}}'=\left(
                   \begin{array}{cc}
                     0 & B_1 \\
                     -B_1^tI_{1,3} & 0 \\
                   \end{array}\right)\dd z.\end{equation}
 \end{definition}
\begin{remark}
 Note that for a
 harmonic map to be conformally harmonic, one only needs
\[\langle\alpha_{\mathfrak{p}}'(\frac{\partial}{\partial z}),\alpha_{\mathfrak{p}}'(\frac{\partial}{\partial z})\rangle =
tr \left( \left(\alpha_{\mathfrak{p}}'(\frac{\partial}{\partial z})\right)^tI_{1,3}\alpha_{\mathfrak{p}}'(\frac{\partial}{\partial z})\right)=0.\]
But this follows immediately from the condition on $B_1$ we have assumed. The same condition now shows that $f$ even is strongly conformally harmonic.
\end{remark}

Applying  to Willmore surfaces, we derive
in view of equation \eqref{B1} immediately
\begin{corollary} \label{nilpotency} The conformal Gauss map of a strong Willmore map is a strongly conformally harmonic map.
\end{corollary}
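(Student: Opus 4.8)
The plan is to check the two defining conditions of a strongly conformally harmonic map first on the open dense locus where $y$ is a genuine Willmore immersion, and then to propagate both across the branch locus, using the hypothesis that the conformal Gauss map extends smoothly (indeed real-analytically) to all of $M$.

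Let $y:M\to S^{n+2}$ be a strong Willmore map and let $\hat{M}\subset M$ be the open dense subset on which $y$ is a conformal Willmore immersion; by hypothesis $Gr=Gr_y$, a priori defined only on $\hat M$, extends to a smooth map on all of $M$. On any contractible chart $U\subset\hat M$, take the canonical lift $Y$ of $y$ and form the frame $F$ of $Gr$ as in \eqref{F}. By Theorem \ref{frame}(1) the Maurer--Cartan form $\alpha=F^{-1}\dd F$ has the displayed block shape with $B_1$ given by \eqref{B1}, and reading off \eqref{B1} (or invoking the Remark after Theorem \ref{frame}) yields $B_1^tI_{1,3}B_1=0$ on $U$. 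Since $y$ is Willmore on $U$, the map $Gr$ is harmonic there by Theorem \ref{thm-willmore}, and by the Corollary stating that $\mathfrak{B}^2|_{V^\perp}=0$ is equivalent to $B_1^tI_{1,3}B_1=0$, the intrinsic, frame-independent condition $\mathfrak{B}^2|_{V^\perp}\equiv 0$ holds throughout $\hat M$.

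It remains to extend these facts to $M$. The tension field of $Gr$ is a continuous expression in $Gr$ and its first two derivatives; it vanishes on the dense set $\hat M$, so by continuity of the smooth extension it vanishes on all of $M$, that is, $Gr:M\to SO^+(1,n+3)/SO^+(1,3)\times SO(n)$ is a harmonic map. Moreover, since $V=Gr$ is now defined on all of $M$, the conformally invariant splitting $M\times\R^{n+4}_1=V\oplus V^\perp$ extends over $M$, and, working in a chart with any local frame of the extended harmonic map, the linear map $\mathfrak B$ of \eqref{eq-B} depends smoothly on the point; hence the vanishing of $\mathfrak B^2|_{V^\perp}$ is a closed condition, and since it holds on the dense subset $\hat M$ it holds on all of $M$. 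Finally, given $p\in M$, choose a contractible neighborhood $U_p$ and any frame $F$ of $Gr$ on $U_p$, and write $\alpha_{\mathfrak p}'$ in the block form \eqref{eq-B0}; by the Corollary just cited, $\mathfrak B^2|_{V^\perp}=0$ on $U_p$ forces $B_1^tI_{1,3}B_1=0$ there, which is precisely condition \eqref{eq-Willmore harmonic}. Hence $Gr$ satisfies Definition \ref{stronglyconfharm} and is strongly conformally harmonic. The only genuinely substantive step --- and the only place where the word ``strong'' in ``strong Willmore map'' is used --- is this extension across the branch points: it is legitimate precisely because both the harmonic map equation and the equation $\mathfrak B^2|_{V^\perp}=0$ are closed conditions on the smoothly extended data that already hold on a dense set; over the immersed locus the corollary is a direct reading of \eqref{B1}.
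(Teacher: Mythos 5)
Your proposal is correct and follows essentially the same route as the paper, which simply reads off $B_1^tI_{1,3}B_1=0$ from the explicit form \eqref{B1} of the canonical frame on the immersed locus. The extra care you take in propagating harmonicity and the frame-independent condition $\mathfrak{B}^2|_{V^\perp}=0$ across the branch points is exactly the step the paper leaves implicit (justified there by real-analyticity of the extended conformal Gauss map), so it is a welcome elaboration rather than a different argument.
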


\begin{theorem} \label{lemma-B-12}
Let $U$ be a contractible open Riemann surface with local complex coordinate $z$. Let $f: U\rightarrow SO^+(1,n+3)/SO^+(1,3)\times SO(n)$ be a strongly conformally harmonic map
 with  frame
$F$, Maurer-Cartan form $\alpha$ and the (1,2) blocked part $B_1$ of $\alpha_{\mathfrak{p}}(\frac{\partial}{\partial z})$ as above. Then $B_1$ has, after some gauge or a gauge and a change of orientation on $U$ if necessary, the form
\begin{equation}\label{eq-B1}
B_1=\left(
      \begin{array}{ccc}
        \sqrt{2}\beta_1 & \cdots &\sqrt{ 2}\beta_n \\
        -\sqrt{2}\beta_1 & \cdots &  -\sqrt{ 2}\beta_n \\
        -k_1 & \cdots & -k_n \\
        -ik_1 & \cdots & - ik_n \\
      \end{array}
    \right).\end{equation}

    \end{theorem}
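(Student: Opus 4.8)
The plan is to derive this theorem as a short consequence of Theorem~\ref{normalizationlemma}, of the gauge-invariance lemma stated just before Definition~\ref{stronglyconfharm}, and of the contractibility of $U$; the genuine difficulty has already been absorbed into Theorem~\ref{normalizationlemma} (proved in Appendix B).

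First I would use that the condition $B_1^tI_{1,3}B_1=0$ does not depend on the chosen frame: this is part~(1) of the lemma preceding Definition~\ref{stronglyconfharm}. Since $f$ is strongly conformally harmonic, near every point of $U$ there is \emph{some} frame for which $B_1^tI_{1,3}B_1=0$; by frame-independence the distinguished frame $F$ of the statement then also satisfies $B_1^tI_{1,3}B_1=0$ near every point, hence on all of $U$. If $f$ were constant we would have $\alpha_{\mathfrak p}'\equiv0$, so $B_1\equiv0$, and there would be nothing to prove (take all $\beta_j=k_j=0$); so assume from now on that $f$ is non-constant.

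Now Theorem~\ref{normalizationlemma} applies on the contractible Riemann surface $U$: it yields a real-analytic map $\mathbb A:U\rightarrow SO^+(1,3)$ such that $\mathbb AB_1$ equals one of the two matrices displayed in \eqref{eq-B1-standard}. I would then turn $\mathbb A$ into an actual change of frame. As recorded in the proof of the gauge-invariance lemma, two frames of the same harmonic map differ by a map $F_0=\mathrm{diag}(F_{01},F_{02}):U\rightarrow SO^+(1,3)\times SO(n)$, and this changes $B_1$ to $F_{01}^{-1}B_1F_{02}$. Taking $F_0=\mathrm{diag}(\mathbb A^{-1},I_n)$ --- a well-defined real-analytic map into $SO^+(1,3)\times SO(n)$, since $U$ is contractible and $SO^+(1,3)$ is a group --- and replacing $F$ by $\hat F=F\cdot F_0$, we obtain another frame of $f$ whose $(1,2)$-block is $\mathbb AB_1$, i.e.\ one of the two normal forms in \eqref{eq-B1-standard}.

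If $\mathbb AB_1$ is the first of these two matrices, it is literally \eqref{eq-B1} and we are done by the gauge $F_0$ alone. Otherwise $\mathbb AB_1$ is the second matrix, which differs from \eqref{eq-B1} only by the sign of its last (the $\phi_4$-)row. This sign is exactly what a change of the orientation of the rank-$4$ Lorentzian bundle $V$ does: replacing the frame vector $\phi_4$ by $-\phi_4$ negates precisely that row and carries the second matrix of \eqref{eq-B1-standard} to the first. Since this reflection of $V$ is not induced by an element of the connected group $SO^+(1,3)$, it has to be listed separately in the statement as a ``change of orientation''; composing it, when needed, with the gauge above brings $B_1$ into the claimed form \eqref{eq-B1}. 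The only point in this argument that needs a little attention is precisely this bookkeeping of the two normal forms and the identification of the residual sign ambiguity with a change of orientation of $V$ rather than with a gauge transformation; everything else is immediate once Theorem~\ref{normalizationlemma} is in hand, which is where the real work --- producing the \emph{global} map $\mathbb A$ out of the pointwise algebraic constraint $B_1^tI_{1,3}B_1=0$ --- actually takes place.
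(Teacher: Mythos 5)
Your overall strategy is the paper's: reduce everything to Theorem~\ref{normalizationlemma}, then realize the normalizing matrix $\mathbb A$ as a gauge $F\mapsto F\cdot\mathrm{diag}(\mathbb A^{-1},I_n)$. The first half of your argument (frame-independence of $B_1^tI_{1,3}B_1=0$, the constant case, and the treatment of the first normal form) is correct, and you are even more careful than the paper about whether the gauge should be $\mathbb A$ or $\mathbb A^{-1}$.

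The handling of the second normal form, however, has a genuine defect. You propose to absorb the residual sign by replacing $\phi_4$ with $-\phi_4$, i.e.\ by right-multiplying the frame by $\mathrm{diag}(1,1,1,-1,I_n)$. That matrix has determinant $-1$, so the resulting ``frame'' no longer takes values in $SO^+(1,n+3)$ and is not a frame of a map into $SO^+(1,n+3)/SO^+(1,3)\times SO(n)$ at all; and if you repair the determinant by simultaneously reflecting one of the $\psi_j$, you are gauging by an element of $S(O^+(1,3)\times O(n))$ outside the structure group $SO^+(1,3)\times SO(n)$, which changes the \emph{point} of the oriented Grassmannian, i.e.\ replaces $f$ by a different (oppositely oriented) map. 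Neither option proves the statement as intended. The phrase ``change of orientation on $U$'' in the theorem refers to the \emph{domain}: the paper sets $w=\bar z$, so that the $(1,0)$-part of $\alpha_{\mathfrak p}$ with respect to the new coordinate has block $\bar B_1$, and since $\mathbb A$ is real, $\mathbb A\bar B_1=\overline{\mathbb AB_1}$ is the complex conjugate of the second matrix in \eqref{eq-B1-standard}, which is exactly of the form \eqref{eq-B1} (with $\beta_j,k_j$ replaced by $\bar\beta_j,\bar k_j$). This keeps both the frame and the map $f$ untouched and only reverses the complex structure of $U$, which is what the subsequent results (e.g.\ Theorem~\ref{th-Willmore-harmonic-U} and the duality statements) actually rely on.
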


\begin{proof}
The conformal harmonicity of $f$ ensures that $B_1$ is a real analytic matrix function (\cite{Ee-S}, \cite{Ku-Sch}).
By Theorem \ref{normalizationlemma}, there exists $A:U \rightarrow SO^+(1,3)$ such that
{\small \begin{equation*}
AB_1=\left(
      \begin{array}{ccc}
        \sqrt{2}\beta_1 & \cdots &\sqrt{ 2}\beta_n \\
        -\sqrt{2}\beta_1 & \cdots &  -\sqrt{ 2}\beta_n \\
        -k_1 & \cdots & -k_n \\
        -ik_1 & \cdots & -ik_n \\
      \end{array}
    \right),\ \hbox{or }
    AB_1=\left(
      \begin{array}{ccc}
        \sqrt{2}\beta_1 & \cdots &\sqrt{ 2}\beta_n \\
        -\sqrt{2}\beta_1 & \cdots &  -\sqrt{ 2}\beta_n \\
        -k_1 & \cdots & -k_n \\
        ik_1 & \cdots & ik_n \\
      \end{array}
    \right)\ \hbox{ on } U.
\end{equation*}}

For the first case, setting
$\hat F= F\cdot\hbox{diag}(A,I_n)
$ we obtain $\hat{B}_1= AB_1$  on $U$.
For the second case, setting $w=\bar{z}$ induces an opposite orientation on $U$ and $U$ is also a Riemann surface for this new coordinate. Now $A\bar{B}_1$ is of the desired form.\end{proof}

Identify $f(p)$ with $V_p$, $ p\in M$, where $V_p$ is the oriented Lorenzian 4-subspace of $\mathbb{R}^{n+4}_1$.
Similar to Theorem \ref{minimal}, we will say that {\em $f$ contains a constant light-like vector $Y_0$ }if there exists a non-zero constant lightlike vector $Y_0$ in $\mathbb{R}^{n+4}_1$ satisfying $Y_0\in V_{p}$ for all $p\in M$.
\begin{theorem}\label{th-Willmore-harmonic-U}
Let $U$ be a contractible open Riemann surface with local complex coordinate $z$. Let $f: U\rightarrow SO^+(1,n+3)/SO^+(1,3)\times SO(n)$ be a harmonic map with  frame
$F=(e_0,\hat{e}_0,e_1,e_2,\psi_1,\cdots,\psi_n):U \rightarrow SO^+(1,n+3)$, Maurer-Cartan form $\alpha$ and the  $(1,2)-$block $B_1$ of $\alpha_{\mathfrak{p}}(\frac{\partial}{\partial z})$ occurring  in $($\ref{eq-B0}$)$. Assume moreover that $f$ is a strongly conformally harmonic map on $U$, that is,  $B_1^tI_{1,3} B_1 =0$.
Then w.l.g., $B_1$ has the form \eqref{eq-B1} on $U$, after a change of orientation of $U$ if necessary.
Writing the $(1,1)-$block $A_1$ of $\alpha_{\mathfrak{k}}(\frac{\partial}{\partial z})$ occurring  in $($\ref{eq-B0}$)$  in the form
\begin{equation}
A_1=\left(
         \begin{array}{ccccc}
         0 &  a_{12} &  a_{13} & a_{14} \\
          a_{12}  & 0 &  a_{23} & a_{24} \\
            a_{13} & -a_{23} & 0 & a_{34} \\
            a_{14}  & -a_{24}& -a_{34}  & 0 \\
         \end{array} \right),
\end{equation}
we distinguish two cases:

 \begin{enumerate}[$(a)$]
\item{ $a_{13}+a_{23}\not\equiv0$ on $U$:}
 In this case, there exists an open dense subset $U\backslash U_0$ such that $a_{13}+a_{23}\neq0$ on $U\backslash U_0$ and  $a_{13}+a_{23}=0$ on $ U_0$.  And $f$ is the conformal Gauss map of the unique Willmore surface $y=[e_0-\hat {e}_0]:U\setminus U_0\rightarrow S^{n+2}$. Moreover, $y$ has a conformal extension to $U$,  but is not an immersion on $U_0$.
  \begin{enumerate}[$(1)$]
  \item If the maximal rank of $B_1$ is $2$,  $y$ is not S-Willmore.
\item  If
the maximal rank of $B_1$ is $1$, then $y$ is S-Willmore on $U\setminus U_0$. Moreover, its dual surface $\hat y$ has a conformal extension to $U$, and after changing the orientation of $U$, $f$ will be the conformal Gauss map of the dual surface $\hat y$ on the points where $\hat y$ is immersed.\vspace{2mm}
 \end{enumerate}

\item {    $a_{13}+a_{23}\equiv 0$ on $U$:}
  In this case,  $f$ contains a constant light-like vector.
  \begin{enumerate}[$(1)$]
  \item   If the maximal rank of $B_1$ is $2$, then $f$ can not (even locally)  be the conformal Gauss map of any Willmore map.\vspace{1mm}
\item    If the maximal rank of $B_1$ is $1$, $f$ belongs to one of the following two cases:
  \begin{enumerate}[$(i)$]
  \item There exists some open and dense subset $U^*$ of $U$ such that $f$ is the conformal Gauss map of some uniquely determined Willmore surface $y^*:  U^* \rightarrow S^{n+2}$, either for $U$ with the given complex structure or the conjugate complex structure.
 Moreover, $y^*$ is conformally equivalent to a minimal surface in $R^{n+2}$, and $y^*$ can be extended smoothly to $U$. The conformal map
 $ y^*$ may be branched or un-branched on $U \setminus U^*$.
  \item $f$ reduces to a conformally harmonic map into $SO^+(1,n+1)/SO^+(1,1)\times SO(n)
$ or into $SO(n+2)/SO(2)\times SO(n)$, considered as natural submanifolds of $SO^+(1,n+3)/SO^+(1,3)\times SO(n)$.
 In this case $f$ is not (even locally) the conformal Gauss map of a Willmore map.
 \end{enumerate} \end{enumerate}
\end{enumerate}
\end{theorem}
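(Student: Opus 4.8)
The proof of Theorem \ref{th-Willmore-harmonic-U} proceeds by writing out the harmonic map equations \eqref{harmonic} explicitly in the normalized gauge of Theorem \ref{lemma-B-12}, and then trying to reconstruct a canonical lift in the sense of Theorem \ref{frame}. The starting point: after a possible change of orientation, we may assume $B_1$ has the form \eqref{eq-B1}, so the data of $f$ are encoded in the functions $k_j,\beta_j$ and the matrix $A_1=(a_{ij})$ above. The key observation is that Theorem \ref{frame}(2) says $f$ is the conformal Gauss map of a conformal immersion precisely when one can gauge by the residual isotropy $SO^+(1,3)\times SO(n)$ so that in addition $A_1$ takes the special skew-form with entries $s_1,\dots,s_4$ as in \eqref{s}. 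The residual gauge freedom that preserves the form \eqref{eq-B1} of $B_1$ is small --- essentially only the $SO(n)$ part acting on $(k_j,\beta_j)$ together with a one-parameter subgroup of $SO^+(1,3)$ fixing the vector $\phi_3-i\phi_4$ direction (equivalently, fixing $[Y]$); this is what forces the dichotomy.

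First I would compute, from the third equation $B_{1\bar z}+\bar A_1 B_1 - B_1\bar A_2=0$ of \eqref{harmonic}, which combinations of the $a_{ij}$ are pinned down by the normalization of $B_1$ and which remain free. One finds that the first and second rows of $B_1$ being $\pm\sqrt2\beta_j$ and the third/fourth rows being $-k_j,-ik_j$ force several entries of $A_1$: in particular $a_{12}$, $a_{34}$, and the combination $a_{14}-ia_{13}$ (or similar) get determined, and crucially the quantity $a_{13}+a_{23}$ emerges as the obstruction --- it measures the failure of the putative lift $Y:=e_0-\hat e_0$ to satisfy $\langle Y_z,Y_z\rangle=0$, equivalently whether $e_0-\hat e_0$ can serve as a (scaled) canonical lift. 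This is why case $(a)$ ($a_{13}+a_{23}\not\equiv0$) is the generic, good case and case $(b)$ is exceptional. In case $(a)$: on the open dense set $U\setminus U_0$ where $a_{13}+a_{23}\neq0$, I would rescale $e_0-\hat e_0$ by an explicit real-analytic factor to produce a genuine canonical lift $Y$, check that the resulting frame has Maurer-Cartan form of the shape in Theorem \ref{frame}, conclude via Theorem \ref{frame}(2) that $y=[e_0-\hat e_0]$ is a conformal immersion with conformal Gauss map $f$, note that harmonicity of $f$ plus Theorem \ref{thm-willmore} makes $y$ Willmore, and that $y$ extends conformally (but not immersively) across $U_0$ because the frame $F$ extends. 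Uniqueness is Theorem \ref{thm-dual gauss map}(3). The rank statements $(a)(1),(a)(2)$ are then immediate from Corollary \ref{S-frame} and Theorem \ref{thm-dual gauss map}(1),(2).

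For case $(b)$, $a_{13}+a_{23}\equiv0$ on $U$: here I would show directly that the vector field $Y_0$ built from the frame (morally $e_0-\hat e_0$ up to the degenerate scaling, i.e. the null direction that would-be lift collapses onto) is actually constant in $\mathbb{R}^{n+4}_1$ --- differentiating and using \eqref{harmonic} with $a_{13}+a_{23}=0$ kills all derivatives --- so $f$ contains a constant lightlike vector, and invoke (the $f$-version of) Theorem \ref{minimal}. If $\mathrm{rank}\,B_1=2$, the argument of case $(a)$ would still force $a_{13}+a_{23}\neq 0$ on a dense set for $f$ to be a conformal Gauss map (because rank-2 rules out the S-Willmore collapse), a contradiction, giving $(b)(1)$. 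If $\mathrm{rank}\,B_1=1$ then $\kappa\parallel D_{\bar z}\kappa$, i.e. $\beta_j = \mu k_j/2$ for a meromorphic-type function, and I would split according to whether the constant vector $Y_0$ is "the right one" (giving a minimal surface $y^*$ in $\mathbb{R}^{n+2}$, case $(b)(2)(i)$, reconstructed by the Weierstrass-type formula after gauging $Y_0$ to a standard null vector and then applying case $(a)$'s construction in the reduced model) or whether $f$ degenerates further so that the whole map lands in a totally geodesic $SO^+(1,n+1)/SO^+(1,1)\times SO(n)$ or $SO(n+2)/SO(2)\times SO(n)$, case $(b)(2)(ii)$; the extension across $U\setminus U^*$ is by real-analyticity of the harmonic map and the branch/unbranch distinction depends on the vanishing order of $k$.

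\textbf{Main obstacle.} The delicate part is case $(b)(2)$: disentangling, when $\mathrm{rank}\,B_1=1$ and a constant lightlike vector is present, the genuine minimal-surface subcase $(i)$ from the fully degenerate subcase $(ii)$, and in subcase $(i)$ carrying out the reconstruction of $y^*$ and its smooth (possibly branched) extension to all of $U$. This requires carefully tracking the residual gauge, normalizing the constant null vector, descending to the lower-dimensional symmetric space, re-running the canonical-lift construction there, and then controlling the behaviour at zeros of $k$ --- and one must also verify the "or its conjugate complex structure" alternative, which is forced exactly as in the second case of Theorem \ref{lemma-B-12}. By contrast, case $(a)$ and the "constant vector" half of case $(b)$ are relatively mechanical consequences of \eqref{harmonic} together with Theorems \ref{frame}, \ref{thm-willmore}, \ref{minimal} and Corollary \ref{S-frame}.
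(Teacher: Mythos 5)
Your overall architecture --- normalize $B_1$ via Theorem \ref{lemma-B-12}, read off the dichotomy from the entry $a_{13}+a_{23}$ of $A_1$, treat case $(a)$ by exhibiting $[e_0-\hat e_0]$ as the surface, and treat case $(b)$ via a constant lightlike vector --- is the same as the paper's. One small correction in case $(a)$: since $Y_{0z}=-a_{12}Y_0+\tfrac{1}{\sqrt2}(a_{13}+a_{23})(e_1-ie_2)$ and $e_1-ie_2$ is null, the conformality $\langle Y_{0z},Y_{0z}\rangle=0$ holds automatically; what $a_{13}+a_{23}$ actually controls is $\langle Y_{0z},Y_{0\bar z}\rangle=|a_{13}+a_{23}|^2$, i.e.\ whether $[Y_0]$ is an immersion or degenerates. (The paper also first extracts the identity $a_{13}+a_{23}=i(a_{14}+a_{24})$ from the harmonic map equations \eqref{harmonic}, which you do not record but which is used repeatedly.)

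The genuine gap is in case $(b)$. Your argument for $(b)(1)$ --- that rank $2$ plus being a conformal Gauss map ``would still force $a_{13}+a_{23}\neq0$ on a dense set'' --- cannot be right as stated, because part $(b)(2)(i)$ of the very theorem exhibits maps with $a_{13}+a_{23}\equiv0$ that \emph{are} conformal Gauss maps. The correct mechanism, which is also the engine of all of case $(b)$, is the following single computation: any immersion $\check y$ with $Gr_{\check y}=f$ must have a lift of the form $Y_\mu=N_0+\mu_1e_1-\mu_2e_2+\tfrac{1}{2}|\mu|^2Y_0$, and the condition $Y_{\mu z}\in\hbox{Span}_{\C}\{e_0,\hat e_0,e_1,e_2\}$ forces $2\beta_j+\bar\mu k_j=0$ for all $j$, hence rank $B_1\le1$; this proves $(b)(1)$ directly. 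For $(b)(2)$, which you acknowledge not having carried out, the paper does not descend to a reduced symmetric space: it first splits off $\sum|k_j|^2\equiv0$ (giving the $SO^+(1,n+1)/SO^+(1,1)\times SO(n)$ reduction after gauging $a_{34}$, $a_{13}$, $a_{14}$ to zero), and otherwise defines $\mu$ by $\beta_j=-\tfrac{\bar\mu}{2}k_j$, verifies $\langle Y_{\mu z},Y_{\mu z}\rangle=0$ from the harmonic map equations via the identity $\mu_z+\sqrt2(a_{13}-ia_{14})+ia_{34}\mu=0$, and then the split between $(i)$ and $(ii)$ is precisely whether $\langle Y_{\mu z},Y_{\mu\bar z}\rangle>0$ on an open dense set or vanishes identically --- in the latter case $Y_\mu$ is a \emph{second} constant lightlike vector, and the constant Lorentzian $2$-plane $\hbox{Span}_{\R}\{Y_0,Y_\mu\}$ yields the reduction to $SO(n+2)/SO(2)\times SO(n)$. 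Finally, the extension of $y$, $\hat y$, $y^*$ across the zeros of $k$ is not mere real-analyticity of $f$: it uses Corollary \ref{corollary-B1-vanish}, i.e.\ the factorization $B_1=h_0\tilde B_1$ with $\tilde B_1$ nowhere zero, to show $\mu=-2\bar\beta_j/\bar k_j$ (or its reciprocal) extends real-analytically.
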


\begin{proof}

As remarked above, the condition on $B_1$ implies that $f$ is a conformally harmonic, even strongly conformally harmonic map.
Moreover, by Theorem  \ref{normalizationlemma} and Theorem  \ref{lemma-B-12}  we can assume w.l.g. (after changing the complex structure of $U$, if necessary) that $B_1$ has the form of (\ref{eq-B1}).
  The proof of parts (a) and (b) is based on an evaluation of the third of the harmonic map equations (\ref{harmonic}).
Writing this equation in terms of matrix entries we obtain:
 \begin{subequations} \label{harmonic-3}
\begin{align}
& ~~ \beta_{j\bar{z}}-\bar{a}_{12}\beta_j-\frac{\sqrt{2}}{2}(\bar{a}_{13}+i\bar{a}_{14})k_j-\sum_{l=1}^n\beta_l\bar{b}_{lj}=0       \label{harmonic-3:1A} \\
& -\beta_{j\bar{z}}+\bar{a}_{12}\beta_j-\frac{\sqrt{2}}{2}(\bar{a}_{23}+i\bar{a}_{24})k_j+\sum_{l=1}^n\beta_l\bar{b}_{lj}=0,    \label{harmonic-3:1B} \\
 &-k_{j\bar{z}}+\sqrt{2}(\bar{a}_{13}+\bar{a}_{23})\beta_j-i\bar{a}_{34}k_j+\sum_{l=1}^n k_l\bar{b}_{lj}=0, \label{harmonic-3:1C}\\
& -ik_{j\bar{z}}+\sqrt{2}(\bar{a}_{14}+\bar{a}_{24})\beta_j+\bar{a}_{34}k_j+i\sum_{l=1}^n k_l\bar{b}_{lj}=0, \ \ j=1,\cdots,n.\label{harmonic-3:1D}
\end{align}
\end{subequations}
By \eqref{harmonic-3:1A}$+$\eqref{harmonic-3:1B} and \eqref{harmonic-3:1C}$+i\cdot$\eqref{harmonic-3:1B} one obtains
 \[(a_{13}+a_{23}-i(a_{14}+a_{24})) \bar{k}_j=(a_{13}+a_{23}-i(a_{14}+a_{24}))\bar{\beta}_j=0, \ j=1,\cdots,n.\]
Since $f$ is non-constant, not all the $\beta _j$ and all the $k_j$ vanish and we infer
\begin{equation}\label{spec-cond}
a_{13}+a_{23}=i(a_{14}+a_{24}).
\end{equation}

Recall that $F=(e_0,\hat{e}_0,e_1,e_2,\psi_1,\cdots,\psi_n)$. Set $Y_0=\frac{1}{\sqrt{2}}(e_{0}-\hat{e}_{0})$ and $ N_0=\frac{1}{\sqrt{2}}(e_{0}+\hat{e}_{0})$. We have
\begin{equation}\label{eq-ez}
  \left\{\begin{split}
  e_{0z}&=a_{12}\hat e_0+a_{13}e_1+a_{14}e_2+\sqrt{2}\sum_{1\leq j\leq n}\beta_j\psi_j,\
   \hat e_{0z} =a_{12} e_0-a_{23}e_1-a_{24}e_2+\sqrt{2}\sum_{1\leq j\leq n}\beta_j\psi_j,\\
  e_{1z}&=a_{13}e_0+a_{23}\hat e_0-a_{34}e_2+\sum_{1\leq j\leq n}k_j\psi_j, \
  e_{2z} =a_{14}e_0+a_{24}\hat e_0+a_{34}e_1+\sum_{1\leq j\leq n}ik_j\psi_j.\\
  \end{split}\right.
\end{equation}
Then
\[Y_{0z}=\frac{1}{\sqrt{2}}(e_{0}-\hat{e}_{0})_z=-a_{12}Y_{0}+\frac{1}{\sqrt{2}}(a_{13}+a_{23})(e_1-ie_2)\]
follows. Now there are two possibilities:\vspace{2mm}

{\bf Case (a):} $a_{13}+a_{23}$ does not  vanish identically on $U$. Since  $a_{13}+a_{23}$ is real analytic,
there exists some subset $U_0$  of $U$ satisfying the first part of the claim. In this case one verifies directly that $[Y_{0}]$ is conformal from $U$ to $S^{n+2}$ and a conformal immersion from $U \setminus U_0$  into $S^{n+2}$.

Calculating
\[Y_{0z\bar{z}}=|a_{13}+a_{23}|^2N_0 \mod \{Y_{0},e_1,e_2 \}\]
shows that $f$ is the harmonic conformal Gauss map of  $[Y_{0}]$ on $U\setminus U_0$. As a consequence,  $[Y_0]$ is a Willmore surface on $U \setminus U_0$.

 The claims involving the   S-Willmore  condition follow from Corollary \ref{S-frame}. So we only need  to consider Case (a2). By Theorem \ref{thm-dual gauss map}, the dual surface $\hat y$ of $[Y_0]$ is defined on $U\setminus U_0$ and has $f$ as its conformal Gauss map if we change the complex structure of $U$,  if $\hat y$ is immersed on an open dense subset of $U\setminus U_0$.

It remains to prove in this case that $\hat{y}$ extends real analytically to all of $U$. By Theorem \ref{thm-dual gauss map}, $\hat y$ is defined on $U\setminus U_0$. To extend $\hat y$ to $U$,  consider a lift $\hat Y$ of $\hat y$. Then  $\hat{Y} = a_0 Y_0 + \check{a}_0 N_0 + a_1 e_1 + a_2 e_2$ for some real valued functions defined on $U\setminus U_0$.
Since $\hat{Y}$ is not always a multiple of $Y_0$, it follows that $Y_0$ and $\hat{Y}$ are linearly independent on some open (and dense) subset $U'(\subset U\setminus U_0)$ of $U$, and $\check{a}_0 \neq 0$  on $U'$ follows. We can thus consider $\frac{1}{\check{a}_0}\hat Y$ on $U'$. Hence we can assume w.l.g. that
$\hat Y$ is of the form
\begin{equation}\label{hat-Y}
\hat Y =N_{0}+\mu_{1}e_{1}-\mu_{2}e_{2}+\frac{1}{2}|\mu|^2 Y_{0}
\end{equation}
 on $U'$, with $\mu=\mu_1+i\mu_2$ a real analytic complex valued function.
Using \eqref{eq-ez} we obtain
\begin{equation}
\hat Y_{z}=\sum_{j}^{n}(2\beta_j+\bar\mu k_j)\psi_j \mod \{e_{0},\hat{e}_{0},e_1,e_2\} \hbox{ on }  U'.
\end{equation}
Now the duality condition implies  $\hat Y_{z} = 0 \mod \{e_{0},\hat{e}_{0},e_1,e_2\},$ whence $\mu$ is the solution to the equations $\beta_j=-\frac{\bar\mu}{2}k_j,\ j=1,2,\cdots,n$ on $U'$.

To extend $\hat Y$ all we need to do is to extend the definition of $\mu$ real analytically to $U$. To extend $\mu$, we only need to show that $\mu$ has a well-defined (finite or infinite) limit at all points $U^*\subset U$ where $\sum|k_j|^2$ vanishes. From Corollary \ref{corollary-B1-vanish} in Appendix B we know $B_1=h_0\tilde B_1$ with $\tilde B_1$ never vanishing on $U$. So for every $p\in U^*$, there exists some $j$ such that $\beta_j=h_0\tilde \beta_j,\ k_j=h_0\tilde k_j$  with $|\tilde\beta_j(p)|^2+|\tilde{k}_j(p)|^2\neq0$.
 If    $\tilde{k}_j(p) \neq 0$, then the limit $\lim_{\tilde p\rightarrow p}\frac{\beta_j}{k_j}=\frac{\tilde{\beta_j}}{\tilde{k}_j}$ exists, is finite and the quotient function is real analytic in a neighbourhood of $p$.
In this case we can define $[\hat Y ]$ also by \eqref{hat-Y}.
If $\tilde{k}_j=0$, then $\tilde{\beta}_j \neq 0$ and the inverse of the limit is real analytic in a neighbourhood of $p$.
Then we consider  (locally) $\tilde{\hat {Y}}=\frac{1}{|\mu|^2}\hat Y$. By
the argument just given $\tilde{\hat{Y}}$ is well-defined and real analytic in a (possibly small) neighbourhood $U_p$ of $p\in U$ and $[\tilde{\hat{Y}}]=[\hat Y]=\hat{y}$ holds on $U_p\cap U'$.
  \vspace{3mm}

{\bf Case (b):} $a_{13}+a_{23}=0$ on $U$. Then we obtain
$Y_{0z}=-a_{12}Y_{0}.$ By scaling, we may assume that $Y_{0z}=0$ holds. Hence we can assume w.l.g. $a_{12} = 0$ on $U$.\vspace{2mm}

 (b1): Let's assume that the strongly conformally harmonic map $f$ is the conformal Gauss map of some conformal
immersion $\check{y}: U' \rightarrow S^{n+2},$ where $U'$ is some (possibly small) open subset of $U$. In this case, the canonical lift $\check{Y}$ of $\check{y}$ satisfies
(with $z = u + iv$)
\begin{equation} \label{checkY=f}
 f = \hbox{Span}_{\mathbb{R}} \{ \check{Y},\check{Y}_u, \check{Y}_v, \check{Y}_{z \bar{z}}\} = \hbox{Span}_{\mathbb{R}}\{e_{0},\hat{e}_{0},e_1,e_2\} = \hbox{Span}_{\mathbb{R}} \{ Y_0, N_0, e_1, e_2 \}.
\end{equation}
Thus similar to the discussions above, we can thus assume that a lift $Y_{\mu}$ of $\check y$ is of the form
\begin{equation} \label{Y-mu}
Y_{\mu}=N_{0}+\mu_{1}e_{1}-\mu_{2}e_{2}+\frac{1}{2}|\mu|^2 Y_{0}
\end{equation}
with $\mu=\mu_1+i\mu_2$ a real analytic complex valued function defined on an open and dense subset $U''$  of $U$.
Then,  since  $[Y_{\mu}]=\check{y}$ is a conformal surface with  conformal Gauss map $f$, we know that $Y_{\mu}$ satisfies
\begin{equation} \label{b2}
\ Y_{\mu z} \in
\hbox{Span}_{\mathbb{C}}\{e_{0},\hat{e}_{0},e_1,e_2\}.
\end{equation}
Using \eqref{eq-ez} we obtain
\begin{equation} \label{Ymuz1}
 Y_{\mu z}=\sum_{j}^{n}(2\beta_j+\bar\mu k_j)\psi_j \mod \{e_{0},\hat{e}_{0},e_1,e_2\} \hbox{ on }  U''.
\end{equation}
Now the last two equations imply  $Y_{\mu z} = 0 \mod \{e_{0},\hat{e}_{0},e_1,e_2\},$ whence
$\beta_j=-\frac{\bar\mu}{2}k_j,\ j=1,2,\cdots,n$ on $U''$.
From this we infer that on $U''$ we have rank $B_1 \leq 1$ and
since $B_1$ is real analytic we obtain $rank(B_1)\leq 1$ on $U$ and the claim follows.\vspace{2mm}

(b2): Let's assume now that the maximal rank of $B_1$ is $1$. We distinguish two cases according to the vanishing or not of $\sum |k_j|^2$ on $U$.\vspace{2mm}

Case (b2.a): $\sum |k_j|^2 \neq 0$ and rank$(B_1) = 1$ on the open  and dense subset $U^\sharp$ of $U$:  Note that these conditions imply
$\beta_j=-\frac{\bar\mu}{2}k_j, j = 1, \dots n$ on $U^\sharp$ for some function $\mu$ on the open (and dense) subset $U^\sharp$ of $U$.
Now we consider $Y_\mu$ of the form \eqref{Y-mu} with this $\mu$. We will show that it satisfies $Y_{\mu},\ Y_{\mu z},\ Y_{\mu z\bar{z}}\in
\hbox{Span}_{\mathbb{C}}\{e_{0},\hat{e}_{0},e_1,e_2\}.$
It will turn out to be convenient to rewrite $Y_\mu$ in the form
\begin{equation} \label{Y-mu-2}
Y_{\mu}=N_{0}+\mu_{1}e_{1}-\mu_{2}e_{2}+\frac{|\mu|^2}{2}Y_{0}=
N_0+\bar\mu P+\mu \bar{P}+\frac{|\mu|^2}{2}Y_{0},
\end{equation}
with $\mu=\mu_1+i\mu_2$ and  $\ P=\frac{1}{2}(e_1-ie_2)$. Note that for $Y_\mu$ equation (\ref{Ymuz1}) holds. Substituting $\beta_j=-\frac{\bar\mu}{2}k_j$ into the expression (\ref{Ymuz1}) for
$Y_{\mu z}$ we obtain
$Y_{\mu u}, Y_{\mu v}  \in \hbox{Span}_{\mathbb{R}}
\{ e_{0}, \hat{e}_{0},e_1,e_2 \}$.

Moreover,  using $P_{\bar{z}}=-i\bar{a}_{34}P
+\frac{1}{2}(\bar{a}_{13}-i\bar{a}_{14})Y_0,$
which follows from $\dd F = F \alpha$ together with the special values for the entries of $\alpha$  in the case under discussion, one derives
$ Y_{\mu z\bar{z}}\in \hbox{Span}_\R \{e_{0},\hat{e}_{0},e_1,e_2\}.$
Thus we have shown that for the lightlike vector $Y_\mu$ the relation
$Y_{\mu},\ Y_{\mu z},\ Y_{\mu z\bar{z}}\in
\hbox{Span}_{\mathbb{C}}\{e_{0},\hat{e}_{0},e_1,e_2\} $ holds.

 Next we want to determine, when the map $Y_\mu$
  comes from some conformal map into $S^{n+2}$. It is easy to verify
that  $y^* = [Y_{\mu}]$ is a conformal (whence Willmore) surface with its conformal Gauss map spanning the same vector space as $f$ if and only if $Y_{\mu}$ satisfies
\begin{equation} \label{b2}
Y_{\mu},\ Y_{\mu z},\ Y_{\mu z\bar{z}}\in
\hbox{Span}_{\mathbb{C}}\{e_{0},\hat{e}_{0},e_1,e_2\},
\ \langle Y_{\mu z},Y_{\mu z}\rangle=0,\ \langle Y_{\mu z},Y_{\mu \bar{z}}\rangle>0.
\end{equation}
We have shown above that the first three conditions are satisfied for $Y_\mu$. To verify the fourth condition we evaluate the harmonicity condition for $f$.
Substituting $\beta_j=-\frac{\bar\mu}{2}k_j$
into the  first equation of  \eqref{harmonic-3} and using  $a_{13}+a_{23}=0$, $a_{12} = 0$, and the third equation of \eqref{harmonic-3} we derive
$\mu_z+\sqrt{2}(a_{13}-ia_{14})+ia_{34}\mu=0$ on $U^\sharp$.

Next we observe that the derivative of  $Y_\mu$ also satisfies
\begin{equation}
Y_{\mu z}=(\cdots)P+(\cdots)Y_0 +(\mu_z+\sqrt{2}(a_{13}-ia_{14})+ia_{34}\mu)\bar{P} = (\cdots)P+(\cdots)Y_0,
\end{equation}
whence $\langle Y_{\mu z},Y_{\mu z}\rangle=0$ follows.

As a consequence, the only condition to decide whether $f$  corresponds to a conformal  immersion or not is, whether we can satisfy
 $ \langle Y_{\mu z},Y_{\mu \bar{z}}\rangle>0$, or not.

This naturally leads to the two cases listed in the theorem.

 Case (i). There exists an  open and dense subset  $U^* \subset U^\sharp \subset U$,  where $ \langle Y_{\mu z},Y_{\mu \bar{z}}\rangle>0$ holds.
Then, for every $z\in U^*$, the subspace spanned by $f(z,\bar z)$  coincides with the one of $Gr_{y^*}(z,\bar z)$. Hence either $f(z,\bar z)=Gr_{ y^*}(z,\bar z)$ on $U^*$, or $f(z,\bar z)$ spans the same subspace as
$Gr_{y^*}(z,\bar z)$ and has an orientation which is opposite to the one of $Gr_{ y^*}(z,\bar z)$. In the latter case this just says that after a change of complex structure of $U$ the conformal Gauss map of $y^*$ coincides with $f$.
Moreover, since $f$ and hence $Gr_{y^*}$ contain the  light-like vector $Y_0$,
by the stereographic projection with respect to $Y_0$, $[Y_{\mu}]$ becomes a minimal
surface in $R^{n+2}$ (\cite{Bryant1984}, \cite{Ejiri1988},\ \cite{Ma-W1}).

To extend ${y^*}=[Y_{\mu}]$ to  $U$, we need only to show that $\mu$ has a  well-defined ( finite or infinite)  limit at all points where $\sum|k_j|^2$ vanishes. The argument for this is very similar to the argument given  in Case $(a2)$.

From Corollary \ref{corollary-B1-vanish} in Appendix B we know   $B_1=h_0\tilde B_1$ with $\tilde B_1$ never vanishing on $U$. So for every $p\in U\backslash U^{\sharp}$, there exists some $j$ such that $\beta_j=h_0\tilde \beta_j,\ k_j=h_0\tilde k_j$  with $|\tilde\beta_j(p)|^2+|\tilde{k}_j(p)|^2\neq0$.
 If    $\tilde{k}_j(p) \neq 0$, then the limit $\lim_{\tilde p\rightarrow p}\frac{\beta_j}{k_j}=\frac{\tilde{\beta_j}}{\tilde{k}_j}$ exists, is finite and the quotient function is real analytic in a neighbourhood of $p$.
In this case we can define $[Y_{\mu}]$ also by \eqref{Y-mu-2}.
If $\tilde{k}_j=0$, then $\tilde{\beta}_j \neq 0$ and the inverse of the limit is real analytic in a neighbourhood of $p$.
Then we consider  (locally) $\tilde{Y}_{\mu}=\frac{1}{|\mu|^2}Y_{\mu}$. By
 the argument just given  $\tilde{Y}_{\mu}$ is well-defined and real analytic in a (possibly small) neighbourhood $U_p$ of $p\in U$ and $[\tilde{Y}_{\mu}]=[Y_{\mu}]={y^*}$ holds on $U_p\cap U^{\sharp}$. \vspace{1mm}

  Case (ii).
If  $ \langle Y_{\mu z},Y_{\mu \bar{z}}\rangle=0$ on $U$, then
$Y_{\mu}$ is another constant light-like vector of $f$, linearly independent from $Y_0$. So $Y_0$ and $Y_\mu$ span a constant, real, 2-dimensional Lorentzian subspace. Let $\{\tilde e_0,\tilde{\hat{e}}_0\}$ be an Euclidean oriented orthonormal basis of it and let $\{\tilde e_0,\tilde{\hat{e}}_0, \tilde e_1, \tilde e_2\}$ be an oriented orthonormal basis of $Span_{\R}\{e_0,\hat{e}_0,e_1,e_2\}$. Since $\{e_0,\hat{e}_0\}$ and
$\{\tilde{e}_0,\tilde{\hat{e}}_0\}$ span 2-dimensional Lorentzian subspaces and $\{e_1,e_2\}$ and $\tilde{e}_1, \tilde{e_2}$ span Euclidean subspaces, there exists a transformation in $SO^+(1,3)$ which maps the first basis onto the second one in the given order.
Then the lift $\tilde{F}(z,\bar z)=( \tilde e_0,\tilde{\hat{e}}_0, \tilde e_1, \tilde e_2,\psi_1,\cdots, \psi_n)$ reduces to a map into $SO(n+2)\subset SO^+(1,n+3)$, i.e, $f$ reduces to a harmonic map into $SO(n+2)/SO(2)\times SO(n)
 \subset SO^+(1,n+3)/SO^+(1,3)\times SO(n)$. \vspace{2mm}

Case $(b2.b)$: $\sum |k_j|^2 \equiv 0$: In this case the integrability condition of $\alpha$ implies, in view of the vanishing of both sides on (\ref{b2}), that the submatrix of $\alpha$ with entries $33,34,43,44$ is integrable. Hence, after gauging $\alpha$ by some matrix in $SO(2)$ we can assume w.l.g. that $a_{34} = 0$ holds.
Hence we obtain
 $e_{1z}=\sqrt{2}a_{13}Y_0$ and $\ e_{2z}=\sqrt{2}a_{14}Y_0.$  Since $Y_{0z}=0$, similarly after gauging $\alpha$ by some matrix in $SO^+(1,3)$ we can assume w.l.g. that $a_{13}=a_{14} = 0$ holds, i.e., $e_1$ and $e_2$ are constant.
Hence
$f$ reduces to a map into  $SO^+(1,n+1)/SO^+(1,1)\times SO(n)
 \subset SO^+(1,n+3)/SO^+(1,3)\times SO(n)$.
\end{proof}

\begin{remark} Note that the proof of Theorem \ref{th-Willmore-harmonic-U} shows how one can construct a Willmore surface from a strongly conformally harmonic map $f:U \rightarrow SO^+(1, n+3)/{SO^+(1,3) \times SO(n)}$  or prove that $f$ is not the conformal Gauss map of any conformal map $y: U \rightarrow S^{n+2}$:
\begin{enumerate}
  \item
 Step 1: Choose any frame $\ \tilde{F}: U \rightarrow SO^+(1, n+3)$.

  \item Step 2: Choose a gauge  $A: U \rightarrow SO^+(1,3) \times SO(n)$ such that the Maurer-Cartan form  $ \alpha = F^{-1} d F$
of the new frame  $F= \tilde{F} A $ has the form as stated in (\ref{eq-B1}).
 For this we may need to change the complex structure on $U$.
 We write  $F=(e_0,\hat{e}_0,e_1,e_2,\psi_1,\cdots,\psi_n)$as in the theorem
 and form $Y_0=\frac{1}{\sqrt{2}}(e_{0}-\hat{e}_{0}).$
 Then in the proof above one shows
 \begin{equation} \label{cases}
 Y_{0z}=\frac{1}{\sqrt{2}}(e_{0}-\hat{e}_{0})_z=-a_{12}Y_{0}+\frac{1}{\sqrt{2}}(a_{13}+a_{23})(e_1-ie_2).
 \end{equation}
So $ [Y_{0}]$ is constant if and only if  $a_{13} + a_{23} \equiv 0$  if and only if the frame $F$ is in {\bf Case (b)}.

  \item Step 3: Consider the function $h = a_{13} + a_{23} : U \rightarrow \C$.
\begin{enumerate}
  \item
 Step 3a: $h \not\equiv 0$ on $U$:

 Then $f$ is the (harmonic) oriented conformal Gauss map of the conformal map $y=[Y_0]=[e_0 - \hat{e}_0] :\hat{U} \rightarrow S^{n+2}$.
It turns out that this is the generic case, see below.

 \item  Step 3b: $h  \equiv 0$ on $U$:

 If the maximal rank of $B_1$ is 2, then $f$ is not (even locally) the conformal Gauss map of any conformal immersion.

  If the maximal rank of $B_1$ is 1,
Consider the function
$p = \sum_{j=1}^n  | k |^2 : U \rightarrow \R _{\geq 0}$.  If $p \equiv 0$ on $U$, then $f$ is not (even locally) the conformal Gauss map of any conformal immersion. If $p \not\equiv 0$ on $U$, we can only obtain (possibly only after changing the complex structure on U) Willmore surfaces in $S^{n+2}$ which are conformal to minimal surfaces in $\R^{n+2}$. If we are not interested in minimal surfaces in $\R^{n+2}, $ then we are done. Otherwise let $\mu =-\frac{2\bar{\beta}_j}{\bar{k}_j}$ on the points $\bar{k}_j\neq 0$. With such a function $\mu$ we consider $Y_\mu$ as in (\ref{Y-mu}).   The stereographic projection of  $Y_\mu$  with center $Y_0 $ yields a minimal surface in $\R^{n+2}$.
  \end{enumerate}
\end{enumerate}
\end{remark}

\begin{remark} \
\begin{enumerate}
\item Note that if $rank B_1=2$, there is a unique gauge of the frame such that Theorem 3.4 holds. Correspondingly, there is a unique map $[Y_0]$ associated to $f$.
If $rank B_1=1$, there are two different kinds of gauges of the frame $F$ such that Theorem 3.4 holds. Consequently there are two different projections from $F$ to $S^{n+2}$, giving a pair of Willmore surfaces dual to each other in general. This re-interprets the duality theorem of Willmore surfaces due to Blaschke \cite{Blaschke}, Bryant \cite{Bryant1984} and Ejiri \cite{Ejiri1988}.
    \item  When we begin with the harmonic map $f$, then one of the associated Willmore maps
     $[Y_0]=[e_{0}-\hat{e}_{0}]$ may degenerate to a point. If $rank B_1=2$, then we will obtain no Willmore surface associated to $f$.
        If $rank B_1=1$, there is another Willmore map $[\hat Y_0]$ associated to $f$. If $[\hat Y_0]$ does not degenerate to a point,  it is conformally equivalent to a minimal surface in
        $\R^{n+2}$. This is exactly how minimal surfaces in $\R^{n+2} $ occur in the  classification of Willmore 2-spheres in $S^3$ by Bryant \cite{Bryant1984} and the classification of  Willmore 2-spheres with dual surfaces in $S^{n+2}$ by Ejiri \cite{Ejiri1988}. We refer to the Corollary below for a description of minimal surfaces in $\R^{n+2}$ in terms of $f$.
\item
It turns out that the associated Willmore map/maps being non-degenerate is the generic case and of most interest in the loop group approach. Moreover if one uses the loop group formalism to produce all strongly conformally harmonic maps $f$ one can recognize immediately by looking at the ``normalized potential" if the associated harmonic map has a constant lightlike vector.  It is one of the main results of \cite{Wang} to show how these exceptional normalized potentials looks like. Excluding this exceptional case, all other normalized potentials yield harmonic maps with frames belonging to the case the associated Willmore map/maps being non-degenerate.
Since minimal surfaces in $\R^{n+2}$ can be well investigated in a simpler way,
we will be primarily  interested in the conformally harmonic maps with non-degenerate associated Willmore map/maps.
\item It is in general very hard to detect whether $y$ is immersed or branched at some point from the behaviour of $f$. It will be an interesting question to discuss the immersion property of $y$ in terms of $f$.
\end{enumerate}\end{remark}

Considering the non degenerate case, we have

\begin{theorem} Let $f$ be a strongly conformally harmonic map as in Theorem \ref{th-Willmore-harmonic-U}. Assume that $f$ does not contain any
constant lightlike vector ( so $f$ belongs to Case $(a)$).

\begin{enumerate}
  \item If  $rank B_1 = 2$, then there exists a  unique  Willmore map  $y:U\rightarrow S^{n+2}$ which is not an
S-Willmore  map,
such that $y$ is immersed on $U \setminus U_0 $ and  has $f$ as  its conformal Gauss map.
\item If $rank B_1 = 1$, then there exists a pair of  S--Willmore maps $y,\hat{y}:U\rightarrow S^{n+2}$ which are dual to each other and  such that
\begin{enumerate}[$(i)$]
  \item on an open dense subset of $U$, $y$ is immersed and $f$ is its conformal Gauss map;
  \item on an open dense subset  of $U$, $\hat y$ is immersed and $f$ is its conformal Gauss map after a change of the orientation of $U$.
  \end{enumerate}
\end{enumerate}\end{theorem}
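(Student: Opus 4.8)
The plan is to extract the two statements from the structural analysis already carried out in Theorem \ref{th-Willmore-harmonic-U}, Case $(a)$, and to upgrade the local/open-dense conclusions there to the precise ``$U\setminus U_0$-immersed, $U$-extended'' statements claimed here. Since by hypothesis $f$ contains no constant lightlike vector, equation \eqref{cases} shows that $a_{13}+a_{23}\not\equiv0$, so $f$ falls into Case $(a)$ of Theorem \ref{th-Willmore-harmonic-U}: after a gauge (and, if needed, a change of complex structure) the Maurer--Cartan form has $B_1$ in the normal form \eqref{eq-B1}, and $y=[Y_0]=[e_0-\hat e_0]$ is a conformal map $U\to S^{n+2}$ which is an immersion exactly off the analytic set $U_0=\{a_{13}+a_{23}=0\}$, with $f=Gr_y$ on $U\setminus U_0$. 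Thus $y$ is a Willmore map in the sense defined earlier, and $f$ is its conformal Gauss map wherever $y$ is immersed; the real-analyticity of $B_1$ (by \cite{Ee-S}, \cite{Ku-Sch}) guarantees $U_0$ has empty interior unless $f$ is constant, which is excluded. This already gives the existence part of both (1) and (2).

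For the dichotomy by rank, I would invoke Corollary \ref{S-frame}: $y$ is S-Willmore if and only if the maximal rank of $B_1$ is $1$. Hence in (1), $rank B_1=2$ forces $y$ to be non-S-Willmore, and by Theorem \ref{thm-dual gauss map}(3) (two Willmore surfaces with the same oriented conformal Gauss map coincide) the Willmore map $y$ with $Gr_y=f$ is unique. In (2), $rank B_1=1$ makes $y$ S-Willmore on $U\setminus U_0$, so Theorem \ref{thm-dual gauss map}(1) produces the unique dual Willmore surface $\hat y$ on $U\setminus U_0$, which by that same statement extends to all of $U$; Theorem \ref{thm-dual gauss map}(2) says $Gr_{\hat y}$ spans the same $4$-space as $Gr_y=f$ but with reversed orientation, which is precisely the assertion that $f$ is the conformal Gauss map of $\hat y$ after changing the orientation (equivalently the complex structure) of $U$. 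The ``open dense subset'' qualifications in (i) and (ii) are just the loci where $y$, resp. $\hat y$, is immersed, which are open and dense because each is a Willmore map.

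The one point requiring genuine work — and the step I expect to be the main obstacle — is the global (real-analytic) extension of $\hat y$ to all of $U$, including across the umbilic locus where $\sum_j|k_j|^2$ vanishes; a priori the lift $\hat Y=N_0+\mu_1e_1-\mu_2e_2+\tfrac12|\mu|^2Y_0$ of the dual surface is only controlled on the open dense set $U'$ where $\mu=\mu_1+i\mu_2$ solves $\beta_j=-\tfrac{\bar\mu}{2}k_j$. This is handled exactly as in Case $(a2)$ of the proof of Theorem \ref{th-Willmore-harmonic-U}: by Corollary \ref{corollary-B1-vanish} one writes $B_1=h_0\tilde B_1$ with $\tilde B_1$ nowhere vanishing, so at each point $p$ there is an index $j$ with $|\tilde\beta_j(p)|^2+|\tilde k_j(p)|^2\neq0$; if $\tilde k_j(p)\neq0$ then $\mu=-2\tilde\beta_j/\tilde k_j$ is real-analytic near $p$ and $[\hat Y]$ is defined by the same formula, while if $\tilde k_j(p)=0$ one instead works with $\tilde{\hat Y}=\frac1{|\mu|^2}\hat Y$, which is real-analytic and projectively equal to $\hat y$ near $p$. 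Patching these local descriptions gives the desired global extension of $\hat y$ to $U$, completing the proof.
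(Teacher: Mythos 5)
Your proposal is correct and follows essentially the same route as the paper: the paper states this theorem without a separate proof, precisely because it is the immediate specialization of Theorem \ref{th-Willmore-harmonic-U}, Case $(a)$ (parts (a1) and (a2)), combined with Corollary \ref{S-frame} for the rank/S-Willmore dichotomy and Theorem \ref{thm-dual gauss map} for uniqueness and the orientation reversal of the dual surface. Your identification of the extension of $\hat y$ across the umbilic locus via Corollary \ref{corollary-B1-vanish} as the only substantive step is exactly where the paper's Case $(a2)$ argument does the work, so nothing is missing.
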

Ejiri's Willmore torus in $S^5$ (\cite{Ejiri1982}) provides an example for Case (1), and Veronese spheres in $S^{2m}$ (\cite{Mon}) provide examples for Case (2).

We  have characterizations of minimal surfaces in $\R^{n+2}$ (the degenerate case) as follows:
\begin{corollary} Let $f$ be a  strongly conformally harmonic map as in Theorem \ref{th-Willmore-harmonic-U}.
\begin{enumerate}
  \item
 If $f$ belongs to  Case (a)
as well as to  Case (b). Then, possibly after  changing  the complex structure  of $U$, $f$ is the conformal Gauss map of some minimal surface in $R^{n+2}$ (after putting $R^{n+2}$ conformally into $S^{n+2}$), and vice versa.
\item If $f$ contains a
constant lightlike vector, then either  $f$ does not correspond to any Willmore map, or $f$ corresponds to a Willmore map which is conformally equivalent to a minimal surface into $\R^{n+2}$ (possibly after  changing  the orientation of $U$).
\end{enumerate}
\end{corollary}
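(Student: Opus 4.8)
The plan is to read both parts off from Theorem~\ref{th-Willmore-harmonic-U}, using Theorem~\ref{minimal} to translate between ``conformally minimal in $\R^{n+2}$'' and ``the conformal Gauss map contains a constant lightlike vector''; no new computation is needed, only an organisation of the case analysis. For part (2), suppose $f$ contains a constant lightlike vector. Normalizing $B_1$ to the form \eqref{eq-B1} via Theorem~\ref{normalizationlemma} and Theorem~\ref{lemma-B-12} (passing to the conjugate complex structure if necessary) and choosing an adapted frame in which this vector is $e_0-\hat e_0$, the map $f$ lies in Case~(b) of Theorem~\ref{th-Willmore-harmonic-U}. One then goes through the subcases: if $\mathrm{rank}\,B_1=2$, then (b1) gives that $f$ is not, even locally, the conformal Gauss map of a Willmore map; if $\mathrm{rank}\,B_1=1$, then by (b2) either $f$ reduces to a harmonic map into $SO^+(1,n+1)/SO^+(1,1)\times SO(n)$ or into $SO(n+2)/SO(2)\times SO(n)$ and is again not such a conformal Gauss map (case (ii)), or (case (i)) $f$ is the conformal Gauss map of a uniquely determined Willmore surface $y^*$ conformally equivalent to a minimal surface in $\R^{n+2}$, possibly after this change of orientation. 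These are exactly the two alternatives asserted in (2).

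For the forward implication in part (1), the first remark is that $f$ can lie in Case~(a) and Case~(b) simultaneously only when $\mathrm{rank}\,B_1=1$: the rank of $B_1$ is gauge invariant (the Lemma following Theorem~\ref{normalizationlemma}), and were it $2$ with $f$ in Case~(b), then (b1) would contradict the assertion of Case~(a) that $f$ is the conformal Gauss map of a Willmore surface. Assuming then $\mathrm{rank}\,B_1=1$, Case~(a) provides a Willmore surface $y=[e_0-\hat e_0]$, immersed on an open dense subset of $U$, whose oriented conformal Gauss map is $f$, while Case~(b) tells us $f$ contains a constant lightlike vector. Since $f=Gr_y$ contains such a vector, Theorem~\ref{minimal} shows $y$ is conformally equivalent to a minimal surface in $\R^{n+2}$; the clause ``possibly after changing the complex structure of $U$'' absorbs the orientation convention for $Gr$, and one may note in passing that the dual surface $\hat y$ has here degenerated to the point determined by the lightlike vector, which is precisely the rank-$1$ Case~(b) picture.

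Conversely, placing a minimal surface of $\R^{n+2}$ conformally into $S^{n+2}$ yields a conformal (hence Willmore) immersion $y$ on $U$ with $f=Gr_y$; a canonical-lift frame is already adapted as in Theorem~\ref{frame}, and $e_0-\hat e_0=\sqrt2\,[y]$ is non-constant, so $a_{13}+a_{23}\not\equiv0$ and $f$ is in Case~(a), while Theorem~\ref{minimal} and conformal minimality give a constant lightlike vector in $f$, so $f$ is in Case~(b) as well (consistently, $y$ is S-Willmore, so $\mathrm{rank}\,B_1=1$ by Corollary~\ref{S-frame}). The only delicate point I anticipate is purely bookkeeping: matching the frame-dependent ``Case~(a)/(b)'' dichotomy with the frame-independent notions ``conformal Gauss map of a Willmore map'' and ``contains a constant lightlike vector'', tracking orientation and $\mathrm{rank}\,B_1$, and recognising the degeneration of the dual surface to a point as the rank-$1$ instance of Case~(b). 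All of this is already contained in Theorem~\ref{th-Willmore-harmonic-U}, so no new analysis is required.
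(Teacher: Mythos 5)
Your proposal is correct and follows the route the paper intends: the corollary is stated there without proof, as a direct repackaging of the case analysis of Theorem \ref{th-Willmore-harmonic-U} together with Theorem \ref{minimal}, Corollary \ref{S-frame} and the rank/gauge dichotomy for $B_1$, which is exactly what you carry out. The one step worth tightening is the assertion in part (2) that a frame adapted to the constant lightlike vector can be chosen while keeping $B_1$ in the normalized form \eqref{eq-B1}; this can be sidestepped entirely by observing that if the normalized gauge already puts $f$ in Case (a), then Theorem \ref{minimal} applied to $y=[e_0-\hat e_0]$ yields the second alternative of (2) at once, and otherwise $f$ is in Case (b) and your subcase chase through (b1), (b2)(i), (b2)(ii) applies verbatim.
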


By Theorem
\ref{thm-dual gauss map} and Theorem \ref{th-Willmore-harmonic-U}, we  have the following
\begin{lemma}
Let $f$ be a strongly conformally harmonic map as in Theorem \ref{th-Willmore-harmonic-U}. Fix the orientation of the contractible open set $U$. Then
\begin{enumerate}
\item either  $f$   is  not the conformal Gauss map of any conformal map on any open subset of $U$;
\item or there exists a unique  Willmore map $y:U\rightarrow S^{n+2}$ such that $f$ is the oriented conformal Gauss map of $y$ on an open dense subset of $U$.
\end{enumerate}
\end{lemma}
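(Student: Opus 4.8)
The plan is to deduce this final Lemma almost entirely from Theorem \ref{th-Willmore-harmonic-U}, together with the uniqueness result Theorem \ref{thm-dual gauss map}(3). Since $f$ is strongly conformally harmonic, by Theorem \ref{lemma-B-12} we may (after possibly passing to the conjugate complex structure, which however is excluded here because the orientation of $U$ is now fixed; so instead we simply record which orientation occurs) bring the $(1,2)$-block $B_1$ into the normal form \eqref{eq-B1}, and then $f$ falls into exactly one of the mutually exclusive cases of Theorem \ref{th-Willmore-harmonic-U}. I would organize the proof by walking through those cases and checking that in each of them either alternative (1) or alternative (2) of the Lemma holds, and that (2), when it holds, comes with the asserted uniqueness.

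First I would dispose of the cases where no Willmore map is produced: Case (b1) (constant lightlike vector, $\mathrm{rank}\,B_1=2$) and Case (b2)(ii) and Case (b2.b) (where $f$ reduces to a harmonic map into $SO(n+2)/SO(2)\times SO(n)$ or $SO^+(1,n+1)/SO^+(1,1)\times SO(n)$). In all of these Theorem \ref{th-Willmore-harmonic-U} explicitly states that $f$ is not, even locally, the conformal Gauss map of any Willmore map, which is alternative (1). Then I would treat Case (a): here Theorem \ref{th-Willmore-harmonic-U} produces a Willmore surface $y=[e_0-\hat e_0]$ on $U\setminus U_0$ with conformal extension to $U$, hence a Willmore map $y:U\to S^{n+2}$ whose oriented conformal Gauss map is $f$ on the open dense set $U\setminus U_0$ — this is alternative (2). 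The subcase $\mathrm{rank}\,B_1=1$ of Case (a) also produces a dual surface $\hat y$, but $\hat y$ has $f$ as its oriented conformal Gauss map only after a change of orientation of $U$; since we have fixed the orientation, only $y$ itself qualifies, so there is still just the one map $y$. Finally, Case (b2)(i): here $f$ is the conformal Gauss map of a Willmore surface $y^*$ "either for $U$ with the given complex structure or the conjugate complex structure"; for the fixed orientation, if it is the given structure we are in alternative (2) with $y=y^*$, and if it is only the conjugate structure then $f$ is not the conformal Gauss map of any conformal map on $U$ with the given orientation, i.e. alternative (1).

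For the uniqueness assertion in alternative (2): suppose $y_1,y_2:U\to S^{n+2}$ are two Willmore maps each having $f$ as oriented conformal Gauss map on an open dense subset. On the intersection of the two dense sets (still open dense) both are conformal immersions with the same oriented conformal Gauss map, so by Theorem \ref{thm-dual gauss map}(3) they agree there; since $y_1,y_2$ are smooth (indeed the immersed parts are dense and the maps extend) and agree on an open dense set, $y_1=y_2$ on $U$. This gives uniqueness of the Willmore map $y$.

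The main obstacle I anticipate is purely bookkeeping rather than mathematical: making the dichotomy genuinely exhaustive and exclusive once the orientation of $U$ is frozen. Theorem \ref{th-Willmore-harmonic-U} repeatedly allows "after a change of orientation if necessary" or "either the given or the conjugate complex structure", and one must check carefully that, for each branch, freezing the orientation lands us cleanly in (1) or (2) and never in a fuzzy intermediate state — in particular that the rank-$1$ case of Case (a) does not accidentally furnish a \emph{second} Willmore map with $f$ as oriented (not merely unoriented) Gauss map for the chosen orientation. I would handle this by invoking Theorem \ref{thm-dual gauss map}(2), which says the dual surface carries the \emph{opposite} orientation of the Gauss map, so for the fixed orientation $\hat y$ is genuinely excluded from alternative (2) and uniqueness is preserved.
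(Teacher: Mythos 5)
Your proposal is correct and follows essentially the same route as the paper: the paper's own proof is a two-line argument that cites Theorem \ref{thm-dual gauss map}(3) for the uniqueness of the correspondence between $y$ and $f$ and Theorem \ref{th-Willmore-harmonic-U} for the dichotomy (either $y$ degenerates/does not exist, or $y$ is immersed on an open dense subset). Your version merely spells out the case-by-case bookkeeping and the orientation argument excluding the dual surface, which the paper leaves implicit.
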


\begin{proof} Case (3) of Theorem \ref{thm-dual gauss map} gives the unique corresponding between $y$ and $f$ and Theorem \ref{th-Willmore-harmonic-U} tells either $y$ reduces to a point, or $y$ is an immersion on an open dense subset, which is exactly the lemma.\end{proof}

Since any two points of a surface $M$ are contained in a contractible open subset of $M$ the corollary yields straightforwardly the following
\begin{theorem}\label{th-Willmore-harmonic}  Let $f: M\rightarrow SO^+(1,n+3)/SO^+(1,3)\times SO(n)$ be a non-constant strongly conformally harmonic map from a connected Riemann surface $M$. If
on a contractible open subset $U\subset M$,  $f$ is the {oriented} conformal Gauss map of some Willmore immersion $y:U\rightarrow S^{n+2}$, then there exists a unique conformal (Willmore) map $y:M\rightarrow S^{n+2}$ such that $f$ is the oriented conformal Gauss map of $y$ on an open dense subset $M_1$ of $M$ and  $\tilde y |_{U}=y$.
\end{theorem}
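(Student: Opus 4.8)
The plan is to globalize the Lemma immediately preceding the theorem by an analytic continuation / patching argument, using the uniqueness statement in Theorem \ref{thm-dual gauss map}(3) to rule out monodromy. Throughout, the complex structure (hence the orientation) of $M$ is kept fixed, so that the Lemma, applied to any contractible open subset with the induced structure, singles out one Willmore map rather than leaving the ambiguity between a Willmore map and its dual. Recall first that a harmonic map into a symmetric space is real analytic (\cite{Ee-S},\cite{Ku-Sch}), so $f$ is real analytic, and by hypothesis $f$ is strongly conformally harmonic, so the Lemma applies on every contractible open subset of $M$. Let $M^\circ\subset M$ be the set of points admitting a contractible open neighbourhood on which alternative $(2)$ of the Lemma holds, i.e. on which there is a (unique) Willmore map having $f$ as oriented conformal Gauss map on an open dense subset. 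The hypothesis that $f$ is the oriented conformal Gauss map of the Willmore immersion $y:U\to S^{n+2}$ says exactly that alternative $(2)$ holds on $U$, with the Willmore map there equal to $y$ by Theorem \ref{thm-dual gauss map}(3); thus $U\subset M^\circ$, so $M^\circ\neq\emptyset$, and by construction $M^\circ$ is open.

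Next I would show $M^\circ$ is closed. Let $q$ lie in the closure of $M^\circ$ and choose a contractible open $W\ni q$. Then $W$ meets $M^\circ$, say at $q'$, which has a contractible neighbourhood $W'$ on which alternative $(2)$ holds, with local Willmore map $y_{W'}$. On the nonempty open set $W\cap W'$ the map $y_{W'}$ restricts to a conformal map which, on an open dense subset, is an immersion with conformal Gauss map $f$; in particular $f|_W$ is the oriented conformal Gauss map of a conformal map on some open subset of $W$, so alternative $(1)$ is impossible on $W$, and $(2)$ must hold on $W$. Hence $q\in M^\circ$, $M^\circ$ is closed, and since $M$ is connected, $M^\circ=M$.

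Finally I would glue. Cover $M$ by contractible open sets $\{W_\alpha\}$ on each of which alternative $(2)$ holds, with unique Willmore map $y_\alpha:W_\alpha\to S^{n+2}$; by Theorem \ref{th-Willmore-harmonic-U} each $y_\alpha$ extends to a conformal map on all of $W_\alpha$ and is an immersion with $Gr_{y_\alpha}=f$ on an open dense subset $W_\alpha^\sharp$. On an overlap $W_\alpha\cap W_\beta$, the maps $y_\alpha$ and $y_\beta$ are Willmore maps sharing the same oriented conformal Gauss map $f$ on the open dense set $W_\alpha^\sharp\cap W_\beta^\sharp\cap(W_\alpha\cap W_\beta)$; by Theorem \ref{thm-dual gauss map}(3) they coincide there, hence on all of $W_\alpha\cap W_\beta$ by continuity (real analyticity). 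The cocycle condition on triple overlaps is automatic, so the $y_\alpha$ patch to a single conformal map $\tilde y:M\to S^{n+2}$. Setting $M_1=\bigcup_\alpha W_\alpha^\sharp$, which is open and dense in $M$, we see that $\tilde y$ is a Willmore immersion with oriented conformal Gauss map $f$ on $M_1$, and $\tilde y|_U=y$ by the uniqueness in Theorem \ref{thm-dual gauss map}(3). Uniqueness of $\tilde y$ follows by the same argument: any other conformal map with this property agrees with $\tilde y$ on the intersection of the two associated dense open sets, hence on all of $M$ by continuity.

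The only non-formal ingredient — and the expected main obstacle — is the overlap consistency used both in the closedness step and in the gluing step. It rests entirely on Theorem \ref{thm-dual gauss map}(3) (uniqueness of a Willmore immersion with prescribed oriented conformal Gauss map), which must be applied on connected open subsets rather than on closed surfaces, and on the real analyticity of $f$ and of the local Willmore maps, which is what allows one to upgrade ``agreement on a dense subset'' to ``agreement everywhere'' — in particular to ensure that the exceptional loci where a local Willmore map fails to be an immersion do not obstruct the patching, and to keep the orientation of $M$ consistently fixed so that the Lemma returns $y_\alpha$ and not a dual surface.
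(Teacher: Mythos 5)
Your proposal is correct and is essentially the argument the paper intends: it globalizes the dichotomy of the preceding Lemma over a connected $M$ and glues the local Willmore maps using the uniqueness from Theorem \ref{thm-dual gauss map}(3) together with real analyticity. The paper compresses the connectedness step into the observation that any two points of $M$ lie in a common contractible open subset, whereas you run an open--closed argument; these are interchangeable, so no substantive difference remains.
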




\section{Appendix B: Proof of Theorem \ref{normalizationlemma} }

Let $U$ be a contractible open subset of some Riemann surface $M$. Then the Maurer-Cartan form of any strongly conformally harmonic map
$f: M \rightarrow G/K$ is  real analytic on $U$.
Moreover, the matrix $B_1$ in \eqref{eq-B0} satisfies $B_1^t I_{1,3} B_1 = 0$
 by Definition \ref{stronglyconfharm}.  In particular, the columns of $B_1$ are orthogonal  complex null vectors relative to the quadratic form
defined by $I_{1,3}$. Our goal is to find a simple canonical form of $B_1.$

The first case to consider is, where $B_1$ consists of one column.
It is easy to verify that every non-vanishing fixed complex null vector $b \in \C^4$ can be mapped by $SO^+(1,3)$ into the space
\[\mathcal{N} = \C (1,-1,0,0)^t\ \hbox{ or into }\ \mathcal{N}_{\pm}= \C (0,0,1, \pm i)^t\] according to whether the real part of $b$ is lightlike (possibly including $0$ ) or spacelike respectively.  For a real analytic complex valued null vector function $b$ it is not possible, in general, to map $b$ by some real analytic matrix function $A \in SO^+(1,3)$ into one of these spaces only.
But if $B_1 = b$ corresponds to a non-trivial strongly conformally harmonic map, then one can map $b$ into the sum $\mathcal{N} \oplus  \mathcal{N}_+$ or  into $\mathcal{N} \oplus  \mathcal{N}_-$.

We start by proving the desired result in the case, where $b$ never vanishes on $U$.

\begin{lemma}\label{lemma-null}
Let $U$ be a contractible open subset of $\C$ and $b: U \rightarrow \C^4_1\backslash\{0\}$
a real analytic null vector.
Then there exists a real analytic map $A:U \rightarrow SO^+(1,3)$ such that
the function $Ab$ is contained in $\mathcal{N} \oplus  \mathcal{N}_+$, i.e. $Ab$ has the form $(p, -p, q,  i  q)^t$.
\end{lemma}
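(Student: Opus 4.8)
The plan is to reduce the problem to a pointwise normal form and then show that the normalization can be made real analytically in a neighborhood of every point, using connectedness of $U$ to patch the local choices. Write $b = b^{(1)} + i b^{(2)}$ with $b^{(1)}, b^{(2)}: U \to \R^4_1$ real analytic. The null condition $\langle b,b\rangle = 0$ splits into $\langle b^{(1)}, b^{(1)}\rangle = \langle b^{(2)}, b^{(2)}\rangle$ and $\langle b^{(1)}, b^{(2)}\rangle = 0$; so the real and imaginary parts are orthogonal and have equal squared norm, call it $\rho = \langle b^{(1)},b^{(1)}\rangle = \langle b^{(2)},b^{(2)}\rangle$, a real analytic function on $U$. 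The target form $(p,-p,q,iq)^t$ corresponds to a null vector whose real part is $(\mathrm{Re}\,p, -\mathrm{Re}\,p, \mathrm{Re}\,q, -\mathrm{Im}\,q)$ and whose imaginary part is $(\mathrm{Im}\,p,-\mathrm{Im}\,p,\mathrm{Im}\,q,\mathrm{Re}\,q)$; one checks these two real vectors span a $2$-plane which is either degenerate (when $\rho = 0$, i.e. both lie in the lightlike line $\R(1,-1,0,0)$ together with the spacelike part) or positive-definite spacelike (when $\rho > 0$) — it is never the case that $\rho < 0$ for a vector of this shape. So the first necessary observation is that $\rho \ge 0$ everywhere; this I expect to follow from the hypothesis that $b$ comes from a strongly conformally harmonic map (one uses that the relevant metric $\langle \kappa, \bar\kappa\rangle$ is nonnegative, equivalently $\langle b, \bar b\rangle \ge 0$, and then $\langle b,\bar b\rangle = \langle b^{(1)},b^{(1)}\rangle + \langle b^{(2)},b^{(2)}\rangle = 2\rho$).

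Next I would treat the generic locus $\{\rho > 0\}$. There the $2$-plane $\Pi(u) = \mathrm{span}_\R\{b^{(1)}(u), b^{(2)}(u)\}$ is a spacelike (Euclidean) $2$-plane in $\R^4_1$ varying real analytically, and I want $A \in SO^+(1,3)$ mapping it onto the standard spacelike $2$-plane $\mathrm{span}\{e_3, e_4\}$ (the $3$rd and $4$th coordinate axes) in an orientation-compatible way, and moreover carrying the specific pair $(b^{(1)}, b^{(2)})$ to the pair realizing $(p,-p,q,iq)^t$. Real analytic existence of such $A$ is a Gram–Schmidt / orthonormalization argument: from $b^{(1)}, b^{(2)}$ build a real analytic orthonormal frame of $\Pi$, extend it to a real analytic orthonormal frame of all of $\R^4_1$ (the orthogonal complement $\Pi^\perp$ is then a real analytic Lorentzian $2$-plane bundle, and on a contractible $U$ one can trivialize it analytically, choosing the timelike direction in the forward cone to land in $SO^+$), and read off $A$ as the change-of-basis matrix. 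One has to be a little careful that the resulting $A$ has the right connected-component signs, but on a contractible set the signs are globally fixable.

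The main obstacle is the set $Z = \{\rho = 0\}$, where $b^{(1)}$ and $b^{(2)}$ become lightlike (and, since they are orthogonal with equal zero norm, proportional up to the degenerate structure), so the $2$-plane $\Pi$ degenerates and the naive Gram–Schmidt breaks down. Here I would exploit real analyticity: if $b \not\equiv 0$ (which we assume: $b$ never vanishes on $U$), then either $\rho \equiv 0$ on $U$ or $Z$ is a proper real analytic subset, hence nowhere dense. In the case $\rho \equiv 0$: every value of $b$ has lightlike real and imaginary parts, and one shows $b$ lies in the image of a single $SO^+(1,3)$-orbit that can be tracked analytically — essentially $b$ takes values in a $\mathcal{N}$-type line times a scalar, and one normalizes directly into $\mathcal{N} \oplus \mathcal{N}_+$ (in fact into $\mathcal{N}$). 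In the case $Z$ nowhere dense: construct $A$ analytically on $U \setminus Z$ as above, then argue it extends analytically across $Z$. For the extension one divides out the scalar ambiguity — writing $b = h_0 \tilde b$ with $\tilde b$ nowhere vanishing (the factorization used elsewhere in the paper, cf. the Corollary referenced as \ref{corollary-B1-vanish}) reduces to the case $b$ nowhere zero and with a ``never lightlike real part after rescaling'' property — and then the orthonormal-frame construction involves only algebraic operations and square roots of quantities that are real analytic and bounded away from $0$ near each point once the scalar and a suitable unitary phase $e^{i\theta}$ rotating $b \mapsto e^{i\theta} b$ (which does not change $[b]$ but does change $b^{(1)}, b^{(2)}$) are chosen to move $\rho$ away from $0$; the key point is that $\rho$ for $e^{i\theta} b$ equals $\mathrm{Re}(e^{2i\theta}\langle b,b\rangle_{\text{hol}})$-type expression plus $\langle b,\bar b\rangle$, and since $\langle b, b\rangle = 0$ one actually gets $\rho$ independent of $\theta$ — so instead the fix at points of $Z$ is to pass to $\tilde b/|{\cdot}|$-type normalization and verify that $Ab$ lands in $\mathcal{N}$ there, which is the limiting case of $\mathcal{N}\oplus\mathcal{N}_+$ and therefore consistent. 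I expect the bookkeeping of these sign and phase normalizations near $Z$, together with verifying the extension is genuinely real analytic and not merely continuous, to be the technical heart of the argument.
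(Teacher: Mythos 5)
Your reduction of the null condition to the statement that $b^{(1)}=\mathrm{Re}\,b$ and $b^{(2)}=\mathrm{Im}\,b$ are orthogonal with equal square norm $\rho$ is correct, and on $\{\rho>0\}$ the Gram--Schmidt construction does produce a real analytic $A$ there. (A side remark: $\rho\ge 0$ needs no appeal to harmonicity --- the lemma has no such hypothesis --- since two orthogonal timelike vectors cannot coexist in signature $(1,3)$.) The genuine gap is exactly at the degenerate locus $Z=\{\rho=0\}=\{\langle b,\bar b\rangle=0\}$, and your proposal does not close it; worse, the specific normalization you chose cannot be extended across $Z$ at all. If $A$ maps the spacelike plane $\Pi=\mathrm{span}\{b^{(1)},b^{(2)}\}$ onto $\mathrm{span}\{e_3,e_4\}$, then $Ab=(0,0,q,iq)^t$ with $p\equiv 0$ on $\{\rho>0\}$ and $\rho=|q|^2$; at a point of $Z$ one would be forced to $q=0$, hence $Ab=0$, contradicting $b\neq 0$. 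Equivalently, your $A$ contains the factor $1/\sqrt{\rho}$ and blows up as $\rho\to 0$, while the correct normal form near such points must acquire a nonzero $\mathcal N$-component. Neither of your proposed repairs works: you yourself observe that the phase rotation $b\mapsto e^{i\theta}b$ leaves $\rho$ unchanged (since $\langle b,b\rangle=0$), and factoring $b=h_0\tilde b$ only removes zeros of $b$, not the zeros of $\langle\tilde b,\bar{\tilde b}\rangle$ caused by a genuine change of type of the null vector. The whole point of allowing the two-dimensional isotropic target $\mathcal N\oplus\mathcal N_+$ rather than a single line is to accommodate this transition, and the passage across $Z$ is precisely the content of the lemma, which you explicitly defer (``the technical heart'').

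The paper's proof avoids the real/imaginary splitting entirely. It identifies $\C^4$ with $\mathrm{Mat}(2,\C)$ so that nonzero null vectors become rank-one matrices, $\R^4_1$ becomes $\mathrm{Herm}(2,\C)$, and $SO^+(1,3)\cong SL(2,\C)/\{\pm I\}$ acts by $X\mapsto gX\bar g^{\,t}$; the target $\mathcal N\oplus\mathcal N_+$ becomes the totally isotropic plane of matrices with vanishing second column. Locally one kills the second column of $X$ by an $SL(2,\C)$-valued analytic function $q_\delta$ (possible because $\det X\equiv 0$), uniformly in the type of $b$, and the local choices differ by an upper-triangular cocycle, which on a contractible $U$ is a coboundary; this glues the local normalizations into a global analytic $A$. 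If you want to keep your vector-space picture, you would need to replace the Gram--Schmidt step by a normalization that targets a varying line inside the isotropic plane $\mathcal N\oplus\mathcal N_+$ and degenerates gracefully to $\mathcal N$ on $Z$, together with an actual proof of analytic extension --- which is essentially what the cocycle argument accomplishes.
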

\begin{proof}
The proof is particularly easy if one realizes $\C^4 \cong Mat(2, \C)$ with quadratic form
\[\langle X,X^\prime \rangle =
 X_{11} X^\prime_{22}  - X_{12} X^\prime_{21} + X_{22} X^\prime_{11}  - X_{21} X^\prime_{12} .\]
In this realization the non-vanishing complex null vectors are exactly all matrices of rank 1, i.e. all non-vanishing matrices of determinant $0$.
As real form we choose $\R^4_1 \cong Herm(2,\C)$, the space of $2 \times 2-$ complex hermitian matrices.

The group $SO(4,\C) \cong (SL(2, \C) \times SL(2,\C) ) / \{ \pm I \}$ acts on $Mat(2,\C)$ by $(g,h).X = gXh^{-1}$. Then $SO^+(1,3) \cong SL(2,\C) / \{\pm I\}$ acts by $g.X = gX \bar{g}^t$.

In the spirit of what was said before the statement of the lemma, we want to transform  any real analytic map $X$ defined in $U$ with values in $Mat(2,\C)$
 into the complex space $\C E_{11} \oplus \C E_{21}$ by the operation
$X \rightarrow gX \bar{g}^t$, where $g \in SL(2, \C)$ is defined on $U$ and real analytic.

Now it is an easy exercise to verify that for any $z_0 \in U$ there is a matrix function, $q_\delta$, defined on some open neighbourhood $U_\delta \subset U$ of $z_0$ such that $X \bar{g}_\delta^t$ has  on $U_\delta$  an identically vanishing second column, if $det(X) \equiv 0$ on $U$.  Of course, then also
$g_\delta X \bar{g}_\delta^t$ has identically vanishing second column.

Next we consider $h_{\alpha \beta} = q_\alpha q_\beta^{-1}$. These matrix functions are defined on $U_\alpha \cap U_\beta$  and form a cocycle  relative to the covering given by the $U_\delta$. Moreover, this cocycle consists of upper triangular matrices of determinant $1$. Therefore, since $U$ is contractible, this cocycle is a co-boundary. Therefore  there exist upper triangular matrices $h_\delta$  of determinant $1$  and  defined on $U_\delta$ satisfying $ q_\alpha q_\beta^{-1}= h_\alpha h_\beta^{-1}$.
As a consequence  $g = h_\alpha^{-1} q_\alpha$ is defined on $U$ and
 the  second column  of  $gX\bar{g}^t$ vanishes identically on $U$.
\end{proof}

Now it is fairly straightforward to prove Theorem 3.4.
If the maximal rank of $B_1$ is $1$, then the argument would be easy, if all columns of $B_1$  would be real analytic multiples of one of the columns, say, the first column of  $B_1$. The actual argument follows in a sense the same idea, but is a bit more sophisticated. If the maximal rank of $B_1$ is $2$, then in the complex vector space spanned by  two generically linearly independent columns of $B_1$ one constructs a real vector which then implies quite directly  what we want in view of the condition $B_1^t I_{1,3} B_1 = 0$.\\

{\em Proof of Theorem \ref{normalizationlemma}:}

First we mention some result which is true for all harmonic maps into a symmetric space, namely that any such harmonic map can be constructed by the loop group method from ``holomorphic potentials''. The proof is as in \cite{DPW} and is not related in any way to the specific properties of conformally harmonic maps which we investigate.

Therefore, let's consider the holomorphic potential of the harmonic map $f$ (for a discussion we refer to \cite{DPW, Do-Wa12}).
 Let
\begin{equation}\label{eq-potential-h}
\xi=(\lambda^{-1}\xi_{-1}+\sum_{j\geq0}\lambda^j\xi_j)\dd z,\  \hbox{ with }\ \xi_{-1}=\left(
    \begin{array}{cc}
      0 & R_1 \\
      -R_1^tI_{1,3} & 0 \\
    \end{array}
  \right),
\end{equation}
be the corresponding holomorphic potential on $U$. Then there exist some real analytic matrices $S_1 \in SO^+(1,3,\C)$ and $S_2\in SO(n,\C)$, such that $B_1=S_1R_1S_2$  holds.

Our claim is equivalent to that there exists some real analytic matrix function $A:U\rightarrow SO^+(1,3)$ such that $AB_1$ has the form desired.

It is easy to see that it suffices to prove this special form for $Q_1 = S_1 R_1$.
Let's write $Q_1$ as a matrix of column vectors, $Q_1 = (q_1,...,q_n).$ Since we assume w.l.g. $B_1\neq 0,$ also $Q_1 \neq 0.$  Hence one of the columns of $Q_1$ does not vanish. Let's assume w.l.g. that the first column $q_1$ of $Q_1$ does not vanish identically. Then the corresponding first column $r_1$ of $R_1$ does not vanish identically. Since $r_1$ is holomorphic, one can factor out some holomorphic (product) function $h_1$ such that $r_1 = h_1 \hat{r}_1$, where $\hat{r}_1$ is holomorphic and never vanishes on $U$.
As a consequence, $q_1 = h_1 \hat{q}_1$, where the globally defined and real analytic map $\hat{q}_1$ never vanishes.

 From Lemma \ref{lemma-null} we obtain now that there exists some real analytic matrix function $A:U\rightarrow SO^+(1,3)$ such that $A \hat{q}_1$ has the desired form
\[A \hat{q}_1=aE_{11}+bE_{21}.\]
Hence also $ A q_1 = h_1  A\hat{q}_1$ has the desired form.\vspace{2mm}

 Let's assume next that $B_1$ has maximal rank $1$. Then  we claim that each column of $Q_1$ is a multiple of $\hat{q}_1$ and
 this multiple is holomorphic on $U$.
As a consequence, $AB_1$ has the desired form.

To prove the claim above, note that the relation between $A S_1 r_1$ and $A S_1 r_j$ can already be found between $r_1$ and $r_j$. By the argument above we can write
$r_1 = h_1 \hat{r}_1$ and $r_j = h_j \hat{r}_j$ with $\hat{r_1}$ and $\hat{r}_j$
never vanishing on $U$. Let $U'$ denote the discrete subset of points in $U$, where none of the occurring, not identically vanishing functions/vector entries, vanish. On this set one can show that an entry of $\hat{r}_1$ does not vanish identically if and only if the corresponding entry of $\hat{r}_j$ does not vanish identically.
Now it is easy to verify that $\hat{r}_j$ is a holomorphic multiple of $\hat{r}_1$. Whence the statement above. \vspace{2mm}

Next let's assume that the maximal rank of $B_1$ is $2$.
In this case we apply the argument given above for $q_1$  to each column of $B_1$, i.e. we write $q_j = h_j \hat{q}_j$, where $\hat{q}_j$ never vanishes on $U$. Note, the case $q_j \equiv 0$ corresponds to $h_j \equiv 0$ and $\hat{q}_j = const \neq 0$.
 We will also assume w.l.g. that the second column of $B_1$ does not vanish identically. Hence $\hat{q}_1$ and $\hat{q}_2$  never vanish  on $U$ and are linearly independent on an open and dense subset $\tilde{U}$ of $U$.

 For the following argument we realize again $\C^4 $ by $Mat(2,\C)$. As before we can apply the theorem above to $\hat{q}_1$ and can assume w.l.g. that $\hat{q}_1$ is a $2 \times 2-$matrix for which the second column is $0$. We will use the notation $\hat{q}_1 = aE_{11} + bE_{21}$ and note that by assumption $|a|^2 + |b| ^2 $ never vanishes on $U$.

  If $ab\equiv0$, then $a\equiv 0$ or $b\equiv0$ on $U$.
The nilpotency condition  $L^t I_{1,3} L=0$ for
$Q_1 = S_1 R_1$ implies that the claim of the theorem holds, after one more (constant) gauging if necessary.

 If $ab\neq 0$, then after applying a constant
$SL(2,\C )-$matrix, if necessary, we can assume w.l.g. that $a \neq 0$ and $b \neq 0$ on the open and dense subset $\tilde{U}$ of $U$.

 In this case, using that $\hat{q}_1$ and $\hat{q}_2$ are perpendicular to each other and to themselves,  it is straightforward to see by a computation on $\tilde{U}$ that $\hat{q}_2$
is  either of the form
$  \hat{a} E_{11}+\hat{b} E_{21} $,  or of the form \[ \hat{a} (aE_{11}+bE_{21})+\hat{b}( aE_{12}+bE_{22}) \hbox{ with } \hat{b} \neq0,\] where the coefficient functions are real analytic on  $\tilde{U}$.
For the first case, we are done, since the coefficients clearly extend to functions defined on $U$.

 In the second case one can show by a simple computation that the complex vector space spanned by $\hat{q}_1$ and $\hat{q}_2$ contains the hermitian matrix
\[ w_1 = |a|^2 E_{11} + \bar{a} b E_{21} + a \bar{b} E_{12} |b|^2.\]
Clearly, this matrix is defined on all of $U$. Moreover, the $SL(2, \C )-$matrix
$g = c_0 (\bar{b} E_{11} - a E_{12} + \bar{a} E_{21} + b E_{22} )$, with $c_0 = 1/{\sqrt{|a|^2 + |b|^2}}$ is a real analytic function on $U$ which transforms
$w_1$ into the matrix  $w_2 = (|a|^2 + |b|^2) E_{11}.$ As a consequence, after this transformation the complex vector space spanned by $q_1$ and $q_2$ contains the constant matrix function $q_0 = E_{11}$.

 By the construction carried out so far, the vectors $q_0, q_1,...$ all are perpendicular to each other and to themselves. In particular, $\langle q_0,q_j\rangle = 0$ and  $\langle q_j,q_j\rangle =0$  for $j>0$ implies by a straightforward computation that each of the matrices $q_j, j>0,$ has a vanishing second column or a vanishing second row. But the relation $\langle q_1, q_k\rangle =0, k>1,$ implies that all $q_k$ have the same type as $q_1$. Hence all
$q_j, j \geq 1,$ are   contained in either $\mathcal{N} \oplus  \mathcal{N}_+$  or $\mathcal{N} \oplus  \mathcal{N}_-$.

   \hfill   $\Box$

\begin{corollary} \label{corollary-B1-vanish}
Let $h_0$ denote the greatest common divisor of the holomorphic
functions
$h_j, j = 1, \dots, n,$ defined in the proof above, then $B_1 = h_0 \hat{B}_1$ and  $\hat{B}_1(z) \neq 0$ for all $z \in U$.
\end{corollary}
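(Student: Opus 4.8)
\emph{Proof proposal.} The plan is to read off the conclusion from the factorizations already constructed in the proof of Theorem~\ref{normalizationlemma}, using only that a holomorphic greatest common divisor exists on the open Riemann surface $U$. Recall from that proof that $B_1=S_1R_1S_2$, where $R_1$ is holomorphic on $U$ and $S_1\in SO^+(1,3,\C)$, $S_2\in SO(n,\C)$ are real analytic; since $\det S_1=\det S_2=1$, the matrices $S_1(z)$ and $S_2(z)$ are invertible at every $z\in U$. Setting $Q_1:=S_1R_1=(q_1,\dots,q_n)$, each column was written there as $q_j=h_j\hat q_j$ with $h_j$ holomorphic on $U$ and $\hat q_j$ real analytic and \emph{nowhere vanishing} on $U$ (an identically vanishing column being covered by $h_j\equiv 0$, $\hat q_j$ a nonzero constant). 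Since $B_1\not\equiv 0$, at least one $h_j$ is not identically zero, so the plan is to factor out the joint holomorphic vanishing of the $h_j$ and verify that the remaining factor cannot vanish.

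First I would fix the meaning of $h_0=\gcd(h_1,\dots,h_n)$. For $z_0\in U$ put $m(z_0):=\min_{1\le j\le n}\mathrm{ord}_{z_0}h_j$; this is a well-defined nonnegative integer, the identically zero $h_j$ contributing $+\infty$ and being harmless because some $h_j\not\equiv 0$. As $U$ is a noncompact — indeed contractible — Riemann surface, the Weierstrass theorem (valid on any open Riemann surface) provides a holomorphic function $h_0$ on $U$ with $\mathrm{ord}_{z_0}h_0=m(z_0)$ for all $z_0\in U$. Then each $g_j:=h_j/h_0$ is holomorphic on $U$, and $\min_j\mathrm{ord}_{z_0}g_j=0$ at every $z_0\in U$; equivalently, the functions $g_1,\dots,g_n$ have no common zero in $U$.

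Now I would set $\hat B_1:=\bigl(g_1\hat q_1,\dots,g_n\hat q_n\bigr)S_2$, so that $B_1=Q_1S_2=h_0\bigl(g_1\hat q_1,\dots,g_n\hat q_n\bigr)S_2=h_0\hat B_1$. To see $\hat B_1(z_0)\neq 0$ for every $z_0\in U$: since $S_2(z_0)$ is invertible, $\hat B_1(z_0)=0$ would force $g_j(z_0)\hat q_j(z_0)=0$ for all $j$, and since each $\hat q_j$ is nowhere vanishing this gives $g_j(z_0)=0$ for all $j$, contradicting the absence of a common zero. Hence $\hat B_1$ never vanishes on $U$ and $B_1=h_0\hat B_1$, which is the assertion. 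The only substantive ingredient is the existence of the holomorphic greatest common divisor $h_0$ on $U$ (the Weierstrass theorem); granting that, the conclusion is immediate from the nowhere-vanishing of the $\hat q_j$ already established in the proof of Theorem~\ref{normalizationlemma}, so no genuine obstacle remains.
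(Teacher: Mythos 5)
Your argument is correct and is essentially the paper's own (implicit) proof: the corollary is stated without a separate argument precisely because it is meant to follow from the factorizations $q_j=h_j\hat q_j$ with $\hat q_j$ nowhere vanishing established in the proof of Theorem~\ref{normalizationlemma}, together with the invertibility of $S_1$ and $S_2$ at every point. Your only addition is to justify the existence of the holomorphic greatest common divisor $h_0$ via the Weierstrass theorem on the (contractible, hence noncompact) domain $U$, a point the paper takes for granted.
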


{\footnotesize{\bf Acknowledgements}\ \ This work was started when the second named author visited the Department of Mathematics of Technische Universit\"{a}t  M\"{u}nchen, and  the Department of Mathematics of Tuebingen University. He would like to express his sincere gratitude for both the hospitality and financial support. The second named author is thankful to Professor Changping Wang and Xiang Ma for their suggestions and encouragement.
The second named author was partly supported by the Project 11571255 of NSFC. The second named author is thankful to the ERASMUS MUNDUS TANDEM Project for the financial supports to visit the TU M\"{u}nchen.}

{\footnotesize

\def\refname{Reference}

}
\vspace{2mm}
{\footnotesize
\begin{multicols}{2}   
Josef F. Dorfmeister

Fakult\" at f\" ur Mathematik,

TU-M\" unchen, Boltzmann str. 3,

D-85747, Garching, Germany

{\em E-mail address}: dorfm@ma.tum.de\\

Peng Wang

College of Mathematics \& Informatics, FJKLMAA,

Fujian Normal University, Qishan Campus,

Fuzhou 350117, P. R. China

{\em E-mail address}: {pengwang@fjnu.edu.cn}

\end{multicols}}
\end{document}